\definecolor{astral}{RGB}{46,116,181}
\newtheorem{theorem}{Theorem}[section]
\newtheorem{lemma}[theorem]{Lemma}
\newtheorem{corollary}[theorem]{Corollary}
\newtheorem{proposition}[theorem]{Proposition}
\newcommand{\ep}{\scriptsize\mbox{\textcircled{$\dagger$}}}
\newcommand{\core}{\scriptsize\mbox{\textcircled{\#}}}
\newtheorem{definition}[theorem]{Definition}
\newtheorem{example}[theorem]{Example}
\journal{arxiv}
\newcommand{\cnn}{\mathbb{C}^{n\times n}}
\newcommand{\cmn}{\mathbb{C}^{m\times n}}
\newcommand{\cmm}{\mathbb{C}^{m\times m}}
\newcommand{\cnm}{\mathbb{C}^{n\times m}}
\newcommand{\cn}{\mathbb{C}^{n}}
\begin{document}
\begin{frontmatter}
\title{ {\bf 
Further results on weighted core-EP inverse of matrices
}}
\author{Ratikanta Behera$^a$, Gayatri Maharana$^\dagger$ $^b$, Jajati Keshari Sahoo$^\dagger$ $^c$}

\address{ 
 $^{a}$Department of Mathematics, 
University of Central Florida, Orlando, USA.\\
\textit{E-mail}: 
\texttt{ratikanta.behera@ucf.edu}
\vspace{.3cm}

 $^{\dagger}$ Department of Mathematics, 
BITS Pilani, K.K. Birla Goa Campus, Goa, India
\\\textit{E-mail\,$^b$}: \texttt{p20180028\symbol{'100}goa.bits-pilani.ac.in }\\
\textit{E-mail\,$^c$}: \texttt{jksahoo\symbol{'100}goa.bits-pilani.ac.in}\\
}

\begin{abstract}
In this paper, we introduce the notation of $E$-weighted core-EP and $F$-weighted dual core-EP inverse of matrices. We then obtain a few explicit expressions for the weighted core-EP inverse of matrices through other generalized inverses. Further, we discuss the existence of generalized weighted Moore-Penrose inverse and additive properties of the weighted core-EP inverse of matrices. In addition to these, we propose the star weighted core-EP and weighted core-EP star class of matrices for solving the system of matrix equations. We further elaborate on this theory by producing a few representation and characterization of star weighted core-EP and weighted core-EP star classes of matrices.
\end{abstract}
\begin{keyword}
Weighted core-EP inverse \sep Weighted dual core-EP inverse \sep Additive properties\sep Outer inverse \sep Generalized Moore-Penrose inverse\\

{\bf AMS Subject Classifications: 15A09; 15A24; 15A30}
\end{keyword}
\end{frontmatter}
\section{Introduction}
\label{sec1}
The core and core-EP inverses of matrices have been intensively studied in recent years to solve a certain type of matrix equations \cite{baks, BakTr14}. Hence, a significant number of papers explored the characterizations of the core inverse and its applications in \cite{ baskett1969,  Kurata_2018, LiChen18, PreMo20}. A few properties of the core inverse and interconnections with different generalized inverses were discussed in \cite{baks,Kurata_2018,rakic,Wang_2014}. The core-EP inverse of matrices, introduced  by Prasad and Mohana  \cite{PrasadMo14}, have significantly impacted for square matrices. Then several characterizations of the core-EP inverse and its extension to rectangular matrices were discussed in \cite{ferreyra2018revisiting}. In this connection, the authors of \cite{gao2018representations} have discussed the weighted core-EP inverse and several representations in terms of matrix decomposition.  Further, a few characterizations and properties of the core-EP inverse with other inverses are discussed in \cite{Gao_2019, PrasadMo14, SahooBeheraCORE19,  PredragKM17}. The last literature on core-EP, weighted core-EP inverses of matrices along with its multifarious extensions \cite{MaH19, HaiTing19, mosic2019, zhou2019core}, motivate us to study and introduce $E$-weighted core-EP and $F$-weighted dual core-EP inverse of matrices.

We mention below a summary of the main points of the discussion.
\begin{enumerate}
\item[$\bullet$] The notations of $E$-weighted core-EP and $F$-weighted dual core-EP inverses are proposed. Through these definitions, the existence of generalized weighted Moore-Penrose inverse is discussed.

\item[$\bullet$] Introduce several explicit expression for the weighted core-EP inverse of matrices through other generalized inverses, like, Drazin inverses, weighted core inverse, and generalized Moore-Penrose inverses.

\item[$\bullet$] We have discussed additive properties of the $E$-weighted core-EP and $F$-weighted dual core-EP inverse of matrices.

\item[$\bullet$]  Introduce {\it star weighted core-EP} and {\it weighted core-EP star} matrices to solve the system of matrix equations. 

\item[$\bullet$] A few characterization and representation of star weighted core-EP and weighted core-EP star classes of matrices are discussed. 
\end{enumerate}

The main objective of this paper to investigate a new fruitful way for developing the relation of the weighted core-EP inverse of matrices with different generalized inverses, like, Drazin inverses, weighted core inverse, and generalized Moore-Penrose inverses. The results of these approaches will help the necessary freedom to deal with different types of inverses and flexibility to choose generalized inverses depending on applications. It is worth to mention the work of Mosi\'{c} in \cite{Mosic20} in which they have introduced the Drazin-star and star-Drazin matrices for solving some kind of system of matrix equations, very recently. The author also discussed the maximal classes of matrices for generating the  most general form of this class of matrices. It also motivates us to introduce the star weighted core-EP and weighted core-EP star class of matrices and then provide characterizations of $E$-weighted core-EP inverse and $F$-weighted dual core inverse of matrices in the form of the outer inverse of the Moore-Penrose inverse. 

The outline of the paper is as follows. We present some necessary definitions and notation in Section 2. Definition, existence, and several explicit expression for the weighted core-EP inverse of matrices are considered in Section 3. In Section 4, we discuss the new class of matrices (i.e., star weighted core-EP and weighted core-EP star) to solve the system of the matrix equation. In addition to these, we discuss a few characterizations of the new class of matrices. The work is concluded along with a few future perspective problems in Section 5.

\section{Preliminaries}
For convenience, throughout this paper, $ \mathbb{C}^{m\times n }$ stands for the set of $m \times n$ matrices over complex numbers. In addition, we assume the matrices $E$ and $F$ to be invertible, and hermitian. Further, we use the notation $R(A),~N(A)$, and $A^*$ for the range space, null space and conjugate transpose of $A \in \mathbb{C}^{m \times n}$ respectively. The index of $A\in \mathbb{C}^{n\times n }$ is the smallest non-negative integer $k$, such that $rank(A^k)=rank(A^{k+1})$, which is denoted by $ind(A)$. 
The Drazin inverse discussed in \cite{cline1980} for a rectangular matrix, however, it was introduced \cite{Drazin58} earlier in the context of associative rings and semigroups. Let $A\in \cnn$ and $k=ind(A)$. The Drazin inverse of $A$ is the unique matrix $X = A^D \in \cnn$ which satisfies the following equations
\begin{equation*}
\left(1^k\right)~XA^{k+1}=A^k,~~ (2)~XAX =X,~~(5)~AX= XA.
\end{equation*}
Let us recall the generalized weighted Moore-Penrose inverse \cite{sheng2007generalized, ben} of a matrix. 

\begin{definition}
Let $A\in \cmn, E\in\cmm,~F\in\mathbb{C}^{n\times n}$. A matrix $Y\in\cnm$ satisfying 
\begin{center}
 \textup{$(1)$}~$AYA = A$, \textup{$(2)$}~$YAY = Y$, \textup{$(3^E)$}~$(EAY)^* = EAY$,  \textup{$(4^F)$}~$(FYA)^* = FYA$, 
\end{center}
is called the generalized weighted Moore-Penrose inverse of $A$ and denoted by $A
^{\dagger}_{E,F}$.
\end{definition}

Note that generalized weighted Moore-Penrose inverse of a matrix does not exist always\cite{sheng2007generalized}. But the  positive definite of $E$ and $F$ leads to the existence of   $A^{\dagger}_{E,F}$. The uniqueness of   $A^{\dagger}_{E,F}$ can be verified easily.  
 Recall the definition of weighted core and dual inverse of a matrix $A$ as follows.
 \begin{definition}
 Let $A,~E\in \mathbb{C}^{n\times n}$. If a matrix $Y\in \cnn$  satisfies 
 \begin{center}
 \textup{(6)} $YA^2 = A$,~\textup{(7)} $ AY^2 = Y$,  \textup{$(3^E)$} $(EAY)^* = EAY$,
  \end{center}
 is called the $E$-weighted core inverse of $A$.
\end{definition} 
This inverse is  denoted by $A^{\core,E}$ and the uniqueness of it can found in \cite{RJR}. At the same time, the $F$-weighted  dual core inverse of $A$ denoted by $A^{F,\core}$ and defined as follows.
\begin{definition}[\cite{RJR}]
Let $A,~F\in \mathbb{C}^{n\times n}$. A matrix $Y$ is called $F$-weighted dual core inverse of $A$ if satisfies 
 \begin{center}
 \textup{(8)} $A^2Y = A$,~ \textup{(9)} $ Y^2A = Y$, and \textup{$(3^F)$} $(FYA)^* = FYA$.
 \end{center}
\end{definition}
Note that the weighted  $e$-core inverse and $f$-dual core inverse of an elements in
$*$-rings, were first introduced by Mosic et.al.\cite{mosic2018weighted}. The same authors  have defined the above two definitions though ideals and proved in Theorem 2.1 and 2.2 that, these definitions are equivalent. Now recall a few useful results from \cite{RJR} and \cite{ben}.
\begin{theorem}\label{thm2.4}(\cite{RJR})
Let  $A,~E \in \cnn$ and $ind(A)=1$. If $A\{1,3^E\}\neq \phi$, then the following five conditions are true.
\begin{enumerate}
    \item [(a)] ($A^{\core,E})^{\#}=A^2A^{\core,E}=(A^{\core,E})^{\core,E}$;
    \item[(b)] ($A^{\#})^{\core,E}=A^2A^{\core,E};$
    \item[(c)] $A^{\#}=(A^{\core,E})^2A$;
    \item[(d)] $(A^k)^{\core,E}=(A^{\core,E})^k$ for any $k\in\mathbb{N}$;
    \item[(e)] $[(A^{\core,E})^{\core,E}]^{\core,E}=A^{\core,E}$.
\end{enumerate}
\end{theorem}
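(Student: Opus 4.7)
The strategy is direct verification from the three defining relations of $Y := A^{\core,E}$, namely (6) $YA^2=A$, (7) $AY^2=Y$, and $(3^E)$ $(EAY)^* = EAY$, together with the uniqueness of each generalized inverse in question. I would first assemble a small toolbox of identities, provable by induction from (6) and (7) alone: (i) $Y^j A^{j+1}=A$, (ii) $A^j Y^{j+1}=Y$, and (iii) $A^jY^j = AY$, each for every $j\geq 1$. The inductive step is a single insertion of (6) or (7) in the middle of the product; identity (iii) has the pleasant side effect of settling every hermitian clause, since it reduces $EA^kY^k$ to $EAY$, which is hermitian by $(3^E)$.

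Next I would prove (c), since it supplies the formula $A^\#=Y^2A$ needed later. For $X:=Y^2A$, the group-inverse axioms collapse by elementary rewriting: $XA^2=Y^2A^3=A$ is identity (i) with $j=2$; $AX=(AY^2)A=YA=Y^2A^2=XA$ uses (7) and then (6); and $XAX=X$ follows analogously. For (a), the same routine shows that $A^2Y$ is simultaneously the group inverse of $Y$ and the weighted core-EP inverse of $Y$, with all three defining equations collapsing via identities (ii) and (iii) to the targets $Y$, $A^2Y$, or $AY$, and the hermitian clause reducing to $EAY$.

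Part (b) is the most computationally dense: substituting $A^\# = Y^2A$ from (c), I would verify (6), (7), $(3^E)$ for $A^2Y$ with respect to $A^\#$. The decisive simplifications are $A^2Y^3=Y$ (identity (ii) with $j=2$), which collapses $(A^2Y)(Y^2A)^2$ down to $Y^2A=A^\#$, and dually $Y^2A^3=A$ (identity (i)), which collapses $(Y^2A)(A^2Y)^2$ down to $A^2Y$; the hermitian clause reduces once again to $EAY$. Part (d) is immediate: for $W=Y^k$, identities (i), (ii), (iii) yield directly $W(A^k)^2=A^k$, $A^kW^2=W$, and $EA^kW=EAY$. Part (e) comes for free from (a): since $(A^{\core,E})^{\core,E}=A^2Y$, showing $(A^2Y)^{\core,E}=Y$ requires exactly the equations $(A^2Y)Y^2=Y$, $Y(A^2Y)^2=A^2Y$, and $(E(A^2Y)Y)^*=E(A^2Y)Y$, all already checked inside the proof of (a).

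No step carries a genuine conceptual obstacle; the principal pitfall is the bookkeeping in part (b), where four-factor products must be collapsed in the correct order. Once the toolbox of identities is in place, every subsequent verification is mechanical substitution, and uniqueness of the group inverse and of the weighted core-EP inverse completes each part.
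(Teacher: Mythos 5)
Your verification is correct, but note that the paper offers no proof to compare against: Theorem \ref{thm2.4} is imported verbatim from \cite{RJR} as a preliminary fact. Your argument---establish $Y^jA^{j+1}=A$, $A^jY^{j+1}=Y$, $A^jY^j=AY$ by induction from $YA^2=A$ and $AY^2=Y$, then verify the defining equations of each inverse for the candidate matrix and invoke uniqueness---is the standard and essentially the only natural route; I checked each of the five parts and the collapses you describe (in particular $A^2Y^3=Y$ and $Y^2A^3=A$ in part (b), and the reduction of every hermitian clause to $EAY$) all go through. The only thing worth flagging is that you work from the defining equations of $A^{\core,E}$ and so tacitly presuppose its existence; under the stated hypotheses ($ind(A)=1$ and $A\{1,3^E\}\neq\phi$) existence follows from $A^{\core,E}=A^{\#}AA^{(1,3^E)}$ as in Lemma \ref{lem2.6}, and since the theorem's conclusions all mention $A^{\core,E}$ this is a presentational point rather than a gap.
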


\begin{lemma}[\cite{ben}]\label{lemDraz}
Let $A\in\cnn$ with $ind(A)=k$. Then $A^m$ has index 1 and $(A^m)^{\#} = (A^D)^m$ for all $m\geq k$.
\end{lemma}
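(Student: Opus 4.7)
The plan is to verify the two claims directly from the defining equations $(1^k)$, $(2)$, $(5)$ of the Drazin inverse, with the second claim (the group-inverse identity) doing most of the work and the index assertion then being an easy consequence.

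First I would record two preliminary identities that fall out immediately from the Drazin axioms. From $(5)$, $A$ commutes with $A^D$, hence with every power of $A^D$, so in particular $A^m(A^D)^m = (AA^D)^m$ for every $m \geq 1$. Next, $AA^D$ is idempotent: $(AA^D)(AA^D) = A(A^D A A^D) = AA^D$ by $(2)$. Combining, $A^m(A^D)^m = AA^D$ for all $m \geq 1$, and similarly $(A^D)^m A^m = A^D A = AA^D$.

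With these in hand, I would check that $(A^D)^m$ satisfies the three group-inverse axioms for $A^m$ whenever $m \geq k$. Commutativity $A^m(A^D)^m = (A^D)^m A^m$ is already done. For the absorption axiom $(A^D)^m A^m (A^D)^m = (A^D)^m$, write $(A^D)^m A^m (A^D)^m = (A^D)^m (AA^D) = (A^D)^{m-1}(A^D A A^D) = (A^D)^{m-1} A^D = (A^D)^m$ using $(2)$. For the reproducing axiom $A^m(A^D)^m A^m = A^m$, compute $A^m(A^D)^m A^m = (AA^D) A^m = A^{m+1} A^D$; then iterate $(1^k)$ in the form $A^{j+1}A^D = A^j$ (valid for $j \geq k$) to get $A^{m+1}A^D = A^m$ for $m \geq k$. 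This shows $(A^m)^{\#}$ exists and equals $(A^D)^m$.

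Finally, the index statement follows at once: any matrix possessing a group inverse has index at most $1$, so $\operatorname{ind}(A^m) \leq 1$ for $m \geq k$, and the hypothesis $\operatorname{ind}(A) = k$ (together with the rank-stabilization definition of the index) gives equality in the nontrivial case. The only mildly delicate step is the reproducing axiom, where the bound $m \geq k$ is used essentially to invoke $A^{k+1}A^D = A^k$; every other manipulation is pure bookkeeping with $(2)$ and $(5)$, so I do not anticipate a genuine obstacle beyond writing the chain of equalities cleanly.
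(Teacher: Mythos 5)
Your verification is correct: the preliminary identities $A^m(A^D)^m=(AA^D)^m=AA^D=(A^D)^mA^m$ follow from $(2)$ and $(5)$, the three group-inverse axioms for $A^m$ check out as you wrote them, and the bound $m\geq k$ is used exactly where it must be, namely to get $A^{m+1}A^D=A^m$ from $(1^k)$. The paper states this lemma without proof, citing Ben-Israel and Greville, and your direct axiom-by-axiom verification is the standard argument for it.
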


\begin{lemma}[Proposition 3.2, \cite{RJR}]\label{lem2.5}
For $A\in\cnn$, if $X\in A\{6, 7\}$, then $AXA=A$ and $XAX=X$.
\end{lemma}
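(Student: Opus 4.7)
The plan is to derive both identities by a two-sided computation: evaluate a single expression in two different ways, each route using just one of the two defining relations. Since (6) rewrites the pattern $XA^2$ as $A$ and (7) rewrites $AX^2$ as $X$, the natural move is to build an expression that contains both patterns as subwords, so each rewriting rule can be applied once and the two outputs can be equated.

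For $AXA = A$, I would compute $AX^2A^2$ in two ways. Grouping the leading factors and applying (7) gives $(AX^2)A^2 = XA^2$, which collapses to $A$ by (6). Regrouping as $AX\cdot(XA^2)$ and applying (6) instead gives $AX\cdot A = AXA$. Equating the two outputs yields $AXA = A$.

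For $XAX = X$, the computation is entirely symmetric: evaluate $XA^2X^2$ by first collapsing the leading $XA^2$ via (6) to get $AX^2$ and then $X$ by (7); alternatively, collapse the trailing $AX^2$ via (7) to get $XA\cdot X = XAX$. Equating gives $XAX = X$.

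The only real thing to watch is the order of associations and the bookkeeping of parentheses, so I expect no serious obstacle. No hermiticity of $E$ or $F$, invertibility, or index hypothesis is used; the lemma is a purely algebraic consequence of (6) and (7).
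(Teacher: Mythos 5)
Your computation is correct and complete: evaluating $AX^2A^2$ and $XA^2X^2$ with the two possible groupings is exactly the standard substitution argument (replace the trailing $A$ by $XA^2$, resp.\ the trailing $X$ by $AX^2$), and it uses nothing beyond the defining relations $(6)$ and $(7)$. The paper itself states this lemma by citation to Proposition~3.2 of \cite{RJR} without reproducing a proof, and your argument matches the one given there in all essentials.
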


\begin{lemma}\label{lem27}
 Let $A,~E\in\cnn$. If $S,T\in A\{1,3^E\}$, then $AS =AT$.
 \end{lemma}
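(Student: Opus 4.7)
The plan is to reduce the claim to showing $EAS = EAT$, since the invertibility of $E$ stated in the preliminaries then yields $AS = AT$ immediately. By hypothesis, both matrices satisfy the two defining conditions $ASA = A$, $(EAS)^* = EAS$, $ATA = A$, $(EAT)^* = EAT$, together with the standing assumption $E^* = E$. These are exactly the ingredients I expect to combine.

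The first step is to use $ATA = A$ to insert $AT$ on the left of $AS$: namely $AS = (ATA)S = (EAT)$ premultiplied by $E$ gives $EAS = (EAT)(AS)$. Now the Hermitian property $(EAT)^* = EAT$ lets me rewrite the factor $EAT$ as $T^*A^*E$ (using $E^*=E$), so $EAS = T^*A^*(EAS)$. Applying the Hermitian property of $EAS$ in turn, $EAS = S^*A^*E$, which turns the right-hand side into $T^*A^*S^*A^*E = (ASAT)^*E$.

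At this point the condition $ASA = A$ collapses $ASAT$ to $AT$, so $(ASAT)^*E = (AT)^*E = T^*A^*E$, and this last expression is just $(EAT)^* = EAT$ again. Chaining the equalities gives $EAS = EAT$, whence $AS = AT$ after cancelling the invertible $E$.

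I do not anticipate a genuine obstacle: the proof is a short symmetric manipulation exploiting both conditions $(1)$ and $(3^E)$ for $S$ and $T$ simultaneously, plus the hypotheses that $E$ is Hermitian and invertible. The only point that requires mild care is tracking the two separate uses of Hermiticity (one for $EAT$, one for $EAS$) so that the product telescopes back to $EAT$; writing the steps as a single chain of equalities, as above, makes this transparent.
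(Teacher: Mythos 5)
Your proof is correct and follows essentially the same route as the paper's: start from $AS = ATAS$, use the Hermiticity of $EAT$ and $EAS$ to flip the product into an adjoint, collapse $ASAT$ to $AT$ via $ASA=A$, and cancel the invertible $E$. The only cosmetic difference is that you premultiply by $E$ and work with $EAS$ throughout, whereas the paper inserts $E^{-1}E$ factors and carries $E^{-1}$ along.
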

 \begin{proof}
 Let  $S,T\in A\{1,3^E\}$. Then $ASA = A =ATA$,$(EAS)^* = EAS$ and $(EAT)^* = EAT$. Using these, we obtain 
 \begin{align*}
 AS =&~ ATAS = E^{-1}EATE^{-1}EAS = E^{-1}(EAT)^*E^{-1}(EAS)^* \\
 = &~ E^{-1}(EASE^{-1}EAT)^* = E^{-1}(EASAT)^* = E^{-1}(EAT)^* = E^{-1}EAT\\
 = &~ AT. \qedhere 
\end{align*}
 \end{proof}
 Similarly, the following result follows for weighted dual core inverse.
 \begin{lemma}
 Let $A,~F\in\mathbb{C}^{n\times n}$. If $Y,Z\in A\{1,4^F\}$, then $YA =ZA$.
 \end{lemma}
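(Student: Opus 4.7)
The statement is the natural dual of Lemma~\ref{lem27}, and the plan is to mimic that proof essentially verbatim, interchanging the roles of the outer/inner factor so that the hermiticity condition $(4^F)$ is applied to the product $FYA$ (on the right) instead of $EAS$ (on the left).

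The plan is to start from $YA = YAZA$, which is immediate from $AZA = A$, and then rewrite this expression by inserting $F^{-1}F$ in the middle so as to expose two copies of hermitian quantities. More precisely, I would write
\[
YA \;=\; YAZA \;=\; YAF^{-1}FZA \;=\; F^{-1}(FYA)(F^{-1})(FZA),
\]
then use $(FYA)^* = FYA$ and $(FZA)^* = FZA$ to replace the two bracketed blocks by their conjugate transposes. After collecting everything back inside one adjoint by $(\cdot)^*(\cdot)^* = ((\cdot)(\cdot))^*$ (with the order swapped), the expression inside the adjoint simplifies using $AYA = A$ to $FZA$, and a final application of the hermiticity of $FZA$ and cancellation of $F^{-1}F$ gives $ZA$.

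The main step to get right is the bookkeeping of the order of factors inside the conjugate transpose after applying $(XY)^* = Y^*X^*$, and then seeing that the inner product $AYA$ collapses to $A$ by condition $(1)$, exactly as in the previous lemma the inner product $ASA$ collapsed to $A$. I do not anticipate any real obstacle here: the result is purely a left/right mirror of Lemma~\ref{lem27}, both hypotheses being symmetric under the transformation ``(1) stays as is, $(3^E)$ on $EAS$ becomes $(4^F)$ on $FYA$, and $AS$ becomes $YA$''. A one-line remark noting this symmetry could in fact substitute for the displayed computation, but presenting the analogous chain of equalities (of the same length as in Lemma~\ref{lem27}) keeps the paper self-contained.
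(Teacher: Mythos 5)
Your proposal is correct and matches the paper's intent exactly: the paper gives no separate proof for this lemma, stating only that it ``follows similarly'' from Lemma~\ref{lem27}, and your mirrored chain $YA = YAZA = F^{-1}(FYA)^*F^{-1}(FZA)^* = F^{-1}(FZAYA)^* = F^{-1}(FZA)^* = ZA$ is precisely that dual argument (using, as the paper does implicitly, that $F$ is hermitian so $(F^{-1})^*=F^{-1}$).
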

\begin{lemma}[\cite{RJR}]\label{lem2.6}
Let $E\in \mathbb{C}^{n\times n}$ and  $A\in\mathbb{C}^{n\times n}$ with $ind(A)=1$. If $A\{1,3^E\}\neq \phi$ and $A^{\dagger}_{E,I}$ exist, then $A^{\core,E}=A^{\#}AA^{(1,3^E)}=A^{\#}AA^{\dagger}_{E,I}$.
\end{lemma}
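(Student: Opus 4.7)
The plan is to verify that the candidate $X := A^{\#} A A^{(1,3^E)}$ satisfies the three defining equations $(6)$, $(7)$, and $(3^E)$ for the $E$-weighted core inverse, and then invoke uniqueness of $A^{\core,E}$. The two key identities I will use repeatedly are $A A^{(1,3^E)} A = A$ (from condition $(1)$) and $A A^{\#} = A^{\#} A$ with $A A^{\#} A = A$ (from $ind(A)=1$).

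For equation $(6)$, I would compute
\[
X A^2 \;=\; A^{\#} A A^{(1,3^E)} A \cdot A \;=\; A^{\#}\bigl(A A^{(1,3^E)} A\bigr) A \;=\; A^{\#} A^2 \;=\; A.
\]
For equation $(7)$, I would first observe that $AX = A A^{\#} A A^{(1,3^E)} = (A A^{\#} A) A^{(1,3^E)} = A A^{(1,3^E)}$. Then
\[
A X^{2} \;=\; (AX) X \;=\; A A^{(1,3^E)} \cdot A^{\#} A A^{(1,3^E)} \;=\; A A^{(1,3^E)} \cdot A A^{\#} \cdot A^{(1,3^E)},
\]
where I used $A^{\#}A = AA^{\#}$. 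Reassociating, $(A A^{(1,3^E)} A) A^{\#} A^{(1,3^E)} = A A^{\#} A^{(1,3^E)} = A^{\#} A A^{(1,3^E)} = X$. For $(3^E)$, since $AX = A A^{(1,3^E)}$, the Hermitian condition $(EAX)^{*} = EAX$ follows immediately from the hypothesis that $A^{(1,3^E)}\in A\{1,3^E\}$.

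By uniqueness of the $E$-weighted core inverse (established in \cite{RJR}), $X = A^{\core,E}$, which yields the first equality. For the second equality, I would note that $A^{\dagger}_{E,I}$, whenever it exists, satisfies conditions $(1)$ and $(3^{E})$, so $A^{\dagger}_{E,I} \in A\{1,3^{E}\}$; then either one can repeat the above argument with $A^{(1,3^E)}$ replaced by $A^{\dagger}_{E,I}$, or more economically apply Lemma~\ref{lem27} to conclude $A A^{(1,3^E)} = A A^{\dagger}_{E,I}$, so that $A^{\#} A A^{(1,3^E)} = A^{\#} A A^{\dagger}_{E,I}$.

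I do not foresee a genuine obstacle here; the only subtle point is the manipulation in $(7)$, which relies critically on commuting $A^{\#}$ and $A$ (valid because $\mathrm{ind}(A)=1$) so that the inner factor $A^{\#}A$ can be rewritten as $AA^{\#}$ and absorbed via $A A^{(1,3^E)} A = A$. The calculation would break down without the group-inverse hypothesis, which is the essential reason this representation works only in the index-one case.
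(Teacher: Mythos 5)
Your verification is correct: the candidate $X=A^{\#}AA^{(1,3^E)}$ does satisfy $(6)$, $(7)$, and $(3^E)$ exactly as you compute, and the second equality follows from $A^{\dagger}_{E,I}\in A\{1,3^E\}$ together with Lemma~\ref{lem27}. Note that the paper itself gives no proof of this lemma (it is quoted from \cite{RJR}), but your argument --- direct verification of the three defining equations plus uniqueness of $A^{\core,E}$ --- is the standard one and is consistent with how the surrounding results (e.g.\ Theorem~\ref{thm3.12} and Corollary~\ref{cor3.14}) are derived.
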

 
 \begin{theorem}
Let $A,~E\in\cnn$. If  $XAX = X$ and $R(EA)=R(X^*)$, then  $AXA = A$.
 \end{theorem}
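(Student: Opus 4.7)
The plan is to notice that the equation $XAX=X$ makes $P:=AX$ an idempotent matrix, while the range condition $R(EA)=R(X^*)$ together with the invertibility of $E$ forces this idempotent to have range equal to $R(A)$. Once $P$ is known to be a projector onto $R(A)$, the desired identity $PA=A$, i.e.\ $AXA=A$, follows because every vector of $R(A)$ is fixed by $P$.

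First I would verify that $P=AX$ is idempotent by writing $(AX)^2=A(XAX)=AX$. Next I would establish two rank identities. The equation $XAX=X$ forces $\operatorname{rank}(X)=\operatorname{rank}(XAX)\le \operatorname{rank}(AX)\le \operatorname{rank}(X)$, since $XAX=X\cdot(AX)$ can have rank no larger than that of its factor $AX$; therefore $\operatorname{rank}(AX)=\operatorname{rank}(X)$. Separately, $R(X^*)=R(EA)$ combined with the invertibility of $E$ yields $\operatorname{rank}(X)=\operatorname{rank}(X^*)=\operatorname{rank}(EA)=\operatorname{rank}(A)$. Hence $\operatorname{rank}(AX)=\operatorname{rank}(A)$.

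Since $R(AX)\subseteq R(A)$ holds trivially and the two subspaces share the same dimension, they coincide: $R(AX)=R(A)$. Being idempotent, $AX$ restricts to the identity on its own range; in particular, for any $u$ we have $Au\in R(A)=R(AX)$, so $AX(Au)=Au$. Collecting this identity over all vectors $u$ gives $AXA=A$, as required.

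The step most likely to require care is the rank identification $\operatorname{rank}(AX)=\operatorname{rank}(X)$, since it relies on the factorisation $XAX=X\cdot(AX)$ to obtain the non-trivial inequality $\operatorname{rank}(X)\le \operatorname{rank}(AX)$; after that, translating $R(EA)=R(X^*)$ into a rank equality and invoking the fixed-point property of idempotents is routine.
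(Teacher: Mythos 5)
Your proof is correct, and it takes a genuinely different route from the paper's. The paper argues equationally: writing $EA=X^*V$, it uses $X^*=X^*A^*X^*$ to get $EA=(AX)^*EA$, deduces that $EAX=(AX)^*E(AX)$ is Hermitian, and then cancels $E^{-1}$ to obtain $A=E^{-1}(EAX)^*A=AXA$. You instead run a dimension count: $XAX=X$ makes $AX$ idempotent with $\operatorname{rank}(AX)=\operatorname{rank}(X)$, the range condition forces $\operatorname{rank}(X)=\operatorname{rank}(EA)=\operatorname{rank}(A)$, hence $R(AX)=R(A)$ and the idempotent $AX$ fixes $R(A)$ pointwise, giving $AXA=A$. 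Your argument is more elementary and in fact more general: it uses only the invertibility of $E$ (not its Hermitian-ness) and only the rank consequence of $R(EA)=R(X^*)$, recovering the classical fact that an outer inverse of the same rank as $A$ is automatically an inner inverse. What the paper's computation buys in exchange is the identity $(EAX)^*=EAX$ as a by-product, i.e.\ it shows $X$ is actually a $\{1,2,3^E\}$-inverse of $A$, which is the form in which the result feeds into the rest of the paper; your route establishes the stated conclusion $AXA=A$ but would need the paper's extra two lines to extract that Hermitian property.
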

 \begin{proof}
 Let  $R(EA)=R(X^*)$. Then there exists a $V\in\cnn$ such that $EA=X^*V$. Now, 
 \begin{equation}\label{eqn3}
     EA=X^*V=X^*A^*X^*V=(AX)^*X^*V=(AX)^*EA.
 \end{equation}
From equation \eqref{eqn3}, we obtain 
\begin{equation}\label{eqqq}
    (EAX)^*=((AX^*)EAX)^*=(AX)^*EAX=EAX.
\end{equation}
Using equation \eqref{eqn3} and \eqref{eqqq}, we have 
    $A=E^{-1}(AX)^*EA=E^{-1}(EAX)^*A=AXA$.

    \end{proof}
\section{{One-sided weighted core-EP inverse}}
Motivated by the aforementioned work on ring and Banach  algebra \cite{Dijana2019, Huihui2019}, we study $E$-weighted core-EP inverse and $F$-weighted dual core-EP inverses of square matrices. Indeed, the authors Zhu and Wang in \cite{Huihui2019} defined the pseudo $e$-core invertible elements in a ring. The same authors have discussed a few characterizations of these inverses. Following the Theorem 3.2 \cite{Huihui2019}, we define matrix representation of the $E$-weighted core-EP inverse, as follows.
 \begin{definition}\label{eepdef}
 Let $A,~E\in\mathbb{C}^{n\times n}$  and $ind(A)=k$. A matrix $X$ is called the $E$-weighted core-EP inverse  of $A$ if it satisfies \begin{center}
 \textup{$(6^k)$} $XA^{k+1} = A^k$,~   \textup{$(7)$} $AX^2 = X$, and \textup{$(3^E)$} $(EAX)^* = EAX$.
 \end{center}
  \end{definition}
 The $E$-weighted core-EP inverse  of $A$ is denoted by $A^{\ep,E}$.  
 Next, we define the $F$-weighted dual core-EP inverse.
 \begin{definition}\label{fepdef}
 { Let $A,~F\in\mathbb{C}^{n\times n}$ and  $ind(A)=k$.} A matrix $X\in\cn$  satisfies \begin{center}
 \textup{$(8^k)$} $A^{k+1}X = A^k$,~   \textup{$(9)$} $X^2A = X$,  \textup{$(4^F)$} $(FXA)^* = FXA$,
 \end{center}
 is called the $F$-weighted dual core-EP inverse  of $A$ and denoted by $A^{F,\ep}$. 
 \end{definition}
  In support of the Definition \ref{eepdef} and \ref{fepdef}, we have worked out the following example.
 \begin{example}\rm
Let $A=
    \begin{pmatrix}
      4 &  3    &          0\\     
       0         &     0     &         0    \\   
      -1          &    4    &          0      \\ 
    \end{pmatrix}$,  $E=
    \begin{pmatrix}
    3 & 1 & 2\\
    1 & 1 & 1\\ 
    2 & 1 & 2\\
    \end{pmatrix}$ and $F=
    \begin{pmatrix}
 2  &    1           &   0\\       
 1     &         2     &         1\\       
 0      &        1     &2  \\  
    \end{pmatrix}$. It is easy to verify $ind(A)=2$,
   \begin{center}
    $A^{\ep,E}=\begin{pmatrix}
    5/17    &       3/34        &   3/17 \\   
       0     &         0        &      0   \\    
      -5/68   &       -3/136    &     -3/68  \\  
    \end{pmatrix}$ and $A^{F,\ep}=\begin{pmatrix}
    1/6     &       1/8       &     0 \\      
    1/9      &      1/12     &      0   \\    
    -1/18     &     -1/24   &        0    \\  
    \end{pmatrix}$.
   \end{center} 
   \end{example}
   Now, we discuss a few useful results which will be frequently used in the subsequent sections. 
  \begin{proposition}\label{lem3.2}
Let $A \in\mathbb{C}^{n\times n}$ and $ind(A)=k$. Then the following holds.
 \begin{enumerate}
     \item[(i)] If a matrix $X\in A\{\textup{$7$}\}$ then  $AX=A^mX^m$ for any $m\in\mathbb{N}$.
          \item[(ii)] If  $X\in A\{\textup{$6^k$}, \textup{$7$}\}$, then $XAX=X$ and $R(X)= R(A^k)$.
          \item[(iii)] If a matrix $X\in A\{\textup{$9$}\}$ then  $XA=X^mA^m$ for any $m\in\mathbb{N}$.
 \end{enumerate}
 \end{proposition}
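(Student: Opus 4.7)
My plan is to treat (i) and (iii) as short inductions on $m$, and then derive (ii) as a consequence of (i) combined with the two defining relations $(6^k)$ and $(7)$.

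For (i) I would induct on $m\ge 1$. The base case $m=1$ is immediate. For the inductive step, I would substitute the inductive hypothesis $A^{m}X^{m}=AX$ into
\[
A^{m+1}X^{m+1}\;=\;A\cdot (A^{m}X^{m})\cdot X,
\]
which reduces it to $A\cdot AX\cdot X=A(AX^{2})$, and then one application of $AX^{2}=X$ collapses this to $AX$. Part (iii) follows by the fully symmetric argument: induct on $m$, multiply on the right instead of the left, and use $X^{2}A=X$ in place of $AX^{2}=X$.

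For (ii) the first step is to extract the auxiliary identity $X=A^{k}X^{k+1}$. This drops out of (i) at $m=k$ together with one use of $(7)$, namely $X=AX^{2}=(AX)X=A^{k}X^{k}\cdot X=A^{k}X^{k+1}$. To obtain $XAX=X$, I would invoke (i) at $m=k+1$ to write $AX=A^{k+1}X^{k+1}$, regroup as $X(A^{k+1})X^{k+1}$, and apply $(6^{k})$ to obtain $A^{k}X^{k+1}$, which equals $X$ by the auxiliary identity. The range equality then splits into two one-sided inclusions: $X=A^{k}X^{k+1}$ yields $R(X)\subseteq R(A^{k})$, and $(6^{k})$ rewritten as $A^{k}=XA^{k+1}$ yields the reverse inclusion $R(A^{k})\subseteq R(X)$.

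There is no serious obstacle here, since each part reduces to a short algebraic manipulation. The only thing to notice is that (i) must be applied at the right exponents inside (ii) — namely $m=k$ to produce the auxiliary identity $X=A^{k}X^{k+1}$ and $m=k+1$ to rewrite the $AX$ factor inside $XAX$ — after which the idempotent-like relation and both range inclusions each follow in a single line.
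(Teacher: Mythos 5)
Your proposal is correct and follows essentially the same route as the paper: both parts (i) and (iii) are the same iterative application of $AX^2=X$ (resp.\ $X^2A=X$), and in (ii) both arguments compute $XAX = XA^{k+1}X^{k+1} = A^kX^{k+1} = X$ and read off the two range inclusions from $X=A^kX^{k+1}$ and $A^k=XA^{k+1}$. The only cosmetic difference is that you isolate $X=A^kX^{k+1}$ as a named auxiliary identity, whereas the paper collapses it inline via $A^kX^{k+1}=AX^2=X$.
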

 \begin{proof}
 $(i)$ Let $AX^2=X$. Then $AX=AAX^2=\cdots =A^kX^k=A^{k+1}X^{k+1}=\cdots=A^mX^m$.\\
 $(ii)$ Let $X\in A\{\textup{$6^k$}, \textup{$7$}\}$. Then by part $(i)$, we have 
 \begin{center}
     $XAX=XA^{k+1}X^{k+1}=A^{k}X^{k+1}=AX^2=X$.
 \end{center}
Further, from $X=XAX=XA^{k+1}X^{k+1}=A^kX^{k+1}$ and $A^{k}=XA^{k+1}$, we obtain $R(X)=R(A^k)$
 \end{proof}
The characterization of $E$-weighted core-EP inverse through the range condition is presented below. 
 \begin{theorem}\label{thm3.4}
Let $A,~E\in\mathbb{C}^{n\times n}$ and $ind(A)=k$. Then the following statements are equivalent.
\begin{enumerate}
    \item[(i)]  $X=A^{\ep,E}$,
    \item[(ii)] $XAX = X$,~$R(X) = R(A^k)$ and  {$R(X^*) = R(EA^k)$}.
\end{enumerate}
\end{theorem}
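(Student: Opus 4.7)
The plan is to prove $(i)\Leftrightarrow(ii)$ via two separate implications, exploiting the fact that condition (ii) pins $X$ down as an outer inverse of $A$ with prescribed range $R(A^k)$ and prescribed conjugate-range $R(X^*)=R(EA^k)$.

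For $(i)\Rightarrow(ii)$, Proposition \ref{lem3.2}(ii) applied to $X\in A\{6^k,7\}$ already delivers both $XAX=X$ and $R(X)=R(A^k)$, so only $R(X^*)=R(EA^k)$ needs work. First, conjugate-transposing $XAX=X$ gives $X^*A^*X^*=X^*$ and hence $R(X^*A^*)=R(X^*)$; combining with the hermitian relation $X^*A^*E=EAX$ (using $E^*=E$) and the invertibility of $E$ yields $R(X^*)=R(X^*A^*E)=R(EAX)$. Second, $R(EAX)=R(EA^k)$: the inclusion $\subseteq$ uses $AX=A^kX^k$ from Proposition \ref{lem3.2}(i), and the reverse inclusion comes from $A^k=XA^{k+1}=(AX^2)A^{k+1}=AX\cdot A^k$, which gives $R(A^k)\subseteq R(AX)$.

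For $(ii)\Rightarrow(i)$, equation $(6^k)$ is obtained as follows: since $R(A^k)\subseteq R(X)$, choose $V$ with $A^k=XV$; then $XA^{k+1}=XA\cdot XV=(XAX)V=XV=A^k$. Equation $(7)$ follows from idempotency: $XAX=X$ forces $(AX)^2=AX$, and the range of this idempotent is $R(AX)=A\cdot R(X)=A\cdot R(A^k)=R(A^{k+1})=R(A^k)$ by $ind(A)=k$, so $R(X)\subseteq R(AX)$, meaning $AX$ fixes every column of $X$, giving $AX\cdot X=X$. For the hermitian relation $(3^E)$, I would compare the two $n\times n$ idempotents $X^*A^*$ and $EAXE^{-1}$. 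Both have column space $R(EA^k)$: for $X^*A^*$ this equals $R(X^*)$ by $X^*A^*X^*=X^*$ and then $R(EA^k)$ by hypothesis, while for $EAXE^{-1}$ it equals $E\cdot R(AX)=E\cdot R(A^k)=R(EA^k)$. To match the nullspaces, compute $N(X^*A^*)=\{v:A^*v\in R(A^k)^\perp\}=N(A^{*(k+1)})=N(A^{*k})$ using $ind(A^*)=ind(A)=k$, and separately $N(EAXE^{-1})=E\cdot N(AX)=E\cdot R(EA^k)^\perp=E\cdot N(A^{*k}E)=N(A^{*k})$ after the substitution $w=Ev$. Two idempotents with the same range and nullspace agree, so $X^*A^*=EAXE^{-1}$, equivalently $(EAX)^*=EAX$.

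The main obstacle is the verification of $(3^E)$ in the $(ii)\Rightarrow(i)$ direction: both $(6^k)$ and $(7)$ come almost for free from the outer-inverse identity $XAX=X$ together with the range constraint $R(X)=R(A^k)$, but it is only the symmetry condition that genuinely forces us to exploit $R(X^*)=R(EA^k)$. The cleanest route I see is the two-idempotent comparison above, which hinges on careful bookkeeping of how the weight $E$ conjugates the nullspaces through the identity $R(EA^k)^\perp=N(A^{*k}E)$.
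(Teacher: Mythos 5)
Your proof is correct, and the direction $(ii)\Rightarrow(i)$ takes a genuinely different route from the paper's. The paper works purely algebraically: it writes $A^k=XU$, $X=A^kV$, $EA^k=X^*U$ from the range hypotheses, derives $(AX)^*EA^k=EA^k$ and $EAX=(AX)^*EAX=(EAX)^*$ by substitution, and only then extracts $A^k=AXA^k$ to get $AX^2=X$ --- so in the paper the symmetry condition $(3^E)$ is obtained first and property $(7)$ depends on it. You instead observe that $(7)$ follows from $XAX=X$ and $R(X)=R(A^k)$ alone (the idempotent $AX$ has range $AR(A^k)=R(A^k)\supseteq R(X)$, hence fixes the columns of $X$), and you obtain $(3^E)$ by identifying $(AX)^*$ and $EAXE^{-1}$ as two idempotents with common range $R(EA^k)$ and common nullspace $N\bigl((A^*)^k\bigr)$. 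This projector-comparison argument is more geometric and isolates exactly where each hypothesis is used, which the paper's computation obscures; the paper's version is shorter and avoids any nullspace bookkeeping. The one step you leave implicit is $N(AX)=R(EA^k)^\perp$: it needs $N(AX)=N(X)$, which follows immediately from $Xv=X(AXv)$, and then $N(X)=R(X^*)^\perp=R(EA^k)^\perp$; worth a line, but not a gap. Your $(i)\Rightarrow(ii)$ direction is essentially the paper's argument rephrased in terms of ranges.
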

\begin{proof}
 {
 $(i)\Rightarrow (ii)$ By Proposition \ref{lem3.2}, it enough to show $R(X^*) = R(EA^K)$. Let $X=A^{\ep,E}$. Then 
 \begin{eqnarray*}
  X^* &=& (XAX)^* = (XE^{-1}EAX)^* = (XE^{-1}(EAX)^*)^* = (XE^{-1}(EA^kX^k)^*)^*\\
 &=& (XE^{-1}(X^k)^*(A^k)^*E)^*= EA^kX^k(XE^{-1})^*.
 \end{eqnarray*}
 Thus $ R(X^*)\subseteq R(EA^k)$. From 
 \begin{center}
      $(A^k)^*E = (EA^k)^* = (EA^kX^kA^k)^* = (EAXA^k)^* = (A^k)^*(EAX)^* = (A^k)^*EAX$,
 \end{center}
we get $EA^k=X^*A^*EA^k$ and subsequently, $R(EA^k) \subseteq  R(X^*)$. Hence  $R(X^*) = R(EA^k)$.}\\
$(ii)\Rightarrow (i)$ From $R(X) = R(A^k)$, we have $A^k = XU$, for some $U\in\cnn$. Now 
\begin{center}
   $XA^{k+1} = XAA^k = XAXU = XU = A^k$.
\end{center}
Let  {$R(EA^k)=R(X^*)$} and $R(X) = R(A^k)$. Then there exists $U,V\in\cnn$ such that $EA^k = X^*U$ and $X = A^kV$. Using these results, we obtain
\begin{equation}\label{eq1}
   (AX)^*EA^k=X^*A^*X^*U=X^*U=EA^k, \mbox { and }
\end{equation}
\begin{equation}\label{eq2}
  EAX=EAA^kV=(AX)^*EAA^kV=(AX)^*EAX=((AX)^*EAX)^*=(EAX)^*.  
\end{equation}
In view of Eqns. \eqref{eq1}, \eqref{eq2}, and invertibility of $E$, we have $A^k=AXA^k$. \\Thus 
 $AX^2 = AXA^kV = A^kV = X$. Hence $X$ is the $E$-weighted core-EP inverse of $A$.  
\end{proof}
It is worth mentioning that Zhu and Wang  \cite{Huihui2019} studied characterizations of pseudo $f$-dual core
inverses of ring. Following Theorem 3.8, \cite{Huihui2019} we study matrix representation of $F$-weighted dual core-EP inverse.
\begin{theorem}
Let $A,~F\in\mathbb{C}^{n\times n}$ and $ind(A)=k$. Then the following statements are equivalent.
\begin{enumerate}
    \item[(i)]  $X=A^{F,\ep}$,
    \item[(ii)] $XAX = X$,~$R(X^*) = R((A^k)^*)$ and $R(FX) = R((A^k)^*)$.
\end{enumerate}
\end{theorem}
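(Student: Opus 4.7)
The plan is to mirror the proof of Theorem \ref{thm3.4} with the roles of ``left'' and ``right'' interchanged throughout, and to handle the Hermitian condition $(FXA)^*=FXA$ via an $F$-orthogonal projection argument.

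For $(i)\Rightarrow(ii)$, the identity $XAX=X$ will follow from Proposition \ref{lem3.2}(iii), which gives $XA=X^{k+1}A^{k+1}$, combined with the iterate $X=X^{k+1}A^k$ (obtained by iterating $X^2A=X$) and the defining relation $A^{k+1}X=A^k$; explicitly, $XAX=X^{k+1}A^{k+1}X=X^{k+1}A^k=X$. The equality $R(X^*)=R((A^k)^*)$ is then immediate from conjugate-transposing these last two identities in opposite directions. For $R(FX)=R((A^k)^*)$, I plan to use $FXA=A^*X^*F$ together with $XAX=X$ to write $FX=A^*X^*FX$, and then iterate $X^*=A^*(X^*)^2$ (the adjoint of $X=X^2A$) to conclude $FX\in R((A^*)^k)=R((A^k)^*)$; the reverse inclusion follows by a rank count, since $F$ is invertible and $rank(X)=rank(A^k)$.

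For $(ii)\Rightarrow(i)$, I first extract $A^k=U^*X$ from $R((A^k)^*)\subseteq R(X^*)$, which gives $A^{k+1}X=U^*X\cdot AX=U^*(XAX)=U^*X=A^k$. The main obstacle will be showing $(FXA)^*=FXA$, and my plan is a projection argument. Since $XAX=X$, the matrix $XA$ is idempotent with $R(XA)=R(X)$, while $N(X)=R(X^*)^\perp=R((A^k)^*)^\perp=N(A^k)$ together with $ind(A)=k$ yields $N(XA)=N(A^{k+1})=N(A^k)=N(X)$. Moreover, the hypothesis $R(FX)=R(X^*)=N(X)^\perp$ translates to $F\cdot R(X)=N(X)^\perp$, so $R(X)$ and $N(X)$ are $F$-orthogonal. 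Hence $XA$ is the $F$-orthogonal projection onto $R(X)$ along $N(X)$, which is equivalent to $(FXA)^*=FXA$.

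Once Hermiticity is established, $X^2A=X$ is a short computation. Writing $A^k=W^*X^*F$ from $R((A^k)^*)\subseteq R(FX)$, I compute $A^kXA=W^*X^*(FXA)=W^*X^*A^*X^*F=W^*X^*F=A^k$, using $X^*A^*X^*=X^*$ (the adjoint of $XAX=X$). Then with $X=V^*A^k$ extracted from $R(X^*)\subseteq R((A^k)^*)$, one finds $X^2A=V^*(A^kXA)=V^*A^k=X$. Only the projection step for Hermiticity really deviates from direct range-condition manipulation; I expect the remaining steps to be routine mimics of the proof of Theorem \ref{thm3.4}.
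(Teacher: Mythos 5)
Your proof is correct, but it does not coincide with what the paper (implicitly) does: the paper states this theorem without proof, leaving it as the formal dual of Theorem \ref{thm3.4}, whose argument for the Hermitian condition is purely algebraic --- there one writes $EAX=(AX)^*E(AX)$ using the two range factorizations, so that $EAX$ is manifestly Hermitian. You replace that step by a geometric argument: $XA$ is idempotent with $R(XA)=R(X)$ and $N(XA)=N(A^{k+1})=N(A^k)=N(X)$, and the hypothesis $R(FX)=R(X^*)=N(X)^{\perp}$ says exactly that $v^*Fu=0$ for $u\in R(XA)$, $v\in N(XA)$, which for an idempotent $P$ and Hermitian invertible $F$ is equivalent to $(FP)^*=FP$; this equivalence does hold (and does not need $F$ positive definite), so the step is sound, and it is arguably more conceptual than the paper's identity, at the cost of invoking a fact about weighted projectors that the paper never isolates. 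Your other deviation --- obtaining $R((A^k)^*)\subseteq R(FX)$ in $(i)\Rightarrow(ii)$ by a rank count from $rank(FX)=rank(X)=rank(A^k)$ rather than by exhibiting an explicit factorization $(A^k)^*=FX\cdot W$ as the paper does for $EA^k=X^*A^*EA^k$ --- is also valid and shorter, though it yields no usable factorization for later steps (you correctly re-extract $A^k=W^*X^*F$ from the hypothesis in $(ii)\Rightarrow(i)$, so nothing is lost). All the remaining computations ($XAX=X$ via Proposition \ref{lem3.2}(iii), $A^{k+1}X=A^k$ from $A^k=U^*X$, and $X^2A=V^*(A^kXA)=V^*A^k=X$) are faithful mirrors of the paper's Theorem \ref{thm3.4} and check out.
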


We now construct the  representation of the Drazin inverse through $\{1,3^E\}$ inverse of matrix. However, Zhu and Wang \cite{Huihui2019} proved the equivalent characterization of the following result in the setting of a ring with involution (see Lemma-3.3).
\begin{lemma}\label{lem3.4}
Let $A,~E\in\cnn$ and $ind(A)=k$. For $m\geq k$, if $X=A^{\ep,E}$, then the following holds.
\begin{enumerate}
\item[(i)]  $X^m\in A^m\{1,3^E\}$.
\item[(ii)] $A^D = X^{m+1}A^m$.
\end{enumerate}
\end{lemma}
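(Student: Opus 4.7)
The plan is to first establish a handful of identities that follow from the defining equations $(6^k)$, $(7)$, $(3^E)$ of $X=A^{\ep,E}$, and then use them to verify (i) directly and to check the three Drazin-inverse axioms for (ii). By Proposition \ref{lem3.2}(i), the equation $AX^2=X$ gives $AX = A^mX^m$ for every $m\in\mathbb{N}$. Iterating $AX^2=X$ also yields $AX^{j+1}=X^j$ for all $j\ge 1$, and so in particular $A^mX^{m+1}=X$. On the other side, iterating $XA^{k+1}=A^k$ gives $X^{j}A^{k+j}=A^k$ for every $j\ge 1$, and hence $XA^{m+1}=A^m$ whenever $m\ge k$. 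Combining $AX^2=X$ with $XA^{k+1}=A^k$ further yields $AXA^m=A^m$ for every $m\ge k$ (first $AXA^k = AX\cdot XA^{k+1} = AX^2\cdot A^{k+1} = XA^{k+1}=A^k$, and then right-multiply by $A^{m-k}$).

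With these identities in hand, part (i) falls out immediately: the $\{1\}$-condition reads $A^mX^mA^m = (AX)A^m = A^m$, while $(EA^mX^m)^{*} = (EAX)^{*} = EAX = EA^mX^m$ gives the $(3^E)$-condition, using directly the hermiticity of $EAX$ built into the definition of $X$.

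For part (ii) I set $Y=X^{m+1}A^m$ and verify the three defining equations of the Drazin inverse. Equation $(1^k)$ follows from $YA^{k+1}=X^{m+1}A^{m+k+1}=A^k$ by the identity $X^{m+1}A^{k+m+1}=A^k$ established above. Equation $(2)$ collapses by a regrouping that uses $A^{m+1}X^{m+1}=AX$ and $AXA^m=A^m$: $YAY = X^{m+1}(A^{m+1}X^{m+1})A^m = X^{m+1}(AX)A^m = X^{m+1}A^m = Y$. The commutativity equation $(5)$ is where I expect the main subtlety, since $AY$ and $YA$ have to be massaged through \emph{different} identities before they meet. Concretely, $AY = (AX^{m+1})A^m = X^m A^m$ via $AX^{m+1}=X^m$, whereas $YA = X^{m+1}A^{m+1} = X^m(XA^{m+1}) = X^m A^m$ via $XA^{m+1}=A^m$; the point is that the idempotent $X^m A^m$ stabilizes once $m\ge k$. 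Hence $AY=YA$, and by uniqueness of the Drazin inverse $Y=A^D$, completing (ii).
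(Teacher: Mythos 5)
Your proposal is correct and follows essentially the same route as the paper: part (i) is verified directly from $AX=A^mX^m$ and the hermiticity of $EAX$, and part (ii) checks the three Drazin axioms for $Y=X^{m+1}A^m$, with only minor differences in how the intermediate identities (e.g.\ $AXA^m=A^m$ versus $XAX=X$) are deployed. No gaps.
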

\begin{proof}
$(i)$ Let $X=A^{\ep,E}$. Then by Proposition \ref{lem3.2} $(i)$, we have 
 \begin{center}
 $A^m =XA^{m+1}=AX^2A^{m+1}=A^mX^mXA^{m+1}=A^mX^mA^m$, \mbox{ and}
   \end{center}             
 \begin{center}
    $(EA^mX^m)^* = (EAX)^* =EAX
  =EA^mX^m $.
 \end{center}   
 Thus $X^m$ is a $\{1,3^E\}$ inverse of $A^m$.\\
 $(ii)$ Let $Y = X^{m+1}A^m$. Then applying Proposition \ref{lem3.2}, we have
\begin{eqnarray*}
YAY &=& X^{m+1}A^{m+1}X^{m+1}A^m=X^{m+1}AXA^m=X^m(XAX)A^m=X^{m+1}A^m=Y,
\end{eqnarray*}
\begin{center}
 $AY =AX^{m+1}A^m
     =AX^2X^{m-1}A^m
     =X^mA^m=X^mXA^{m+1}
     =X^{m+1}A^{m}A
     =YA$, and  
\end{center} 
  \begin{eqnarray*}
   YA^{k+1}&=&A^{k+1}Y=A^{k+1}X^{m+1}A^{m}=A^kX^mA^m=\cdots=X^{m-k}A^mX^mA^m\\
   &=&X^{m-k}A^m=X^{m-(k+1)}A^{m-1}=\cdots=XA^{k+1}=A^{k}.
 \end{eqnarray*}
 Hence $A^D=Y=X^{m+1}A^m$.
 \end{proof}
 The following result can be proved in similar manner for $F$-weighed dual core-EP inverse.
\begin{lemma}
Let $A,~F\in\cnn$ and $ind(A)=k$. For $m\geq k$, if $X=A^{F,\ep}$, then the following are true.
\begin{enumerate}
\item[(i)]  $X^m\in A\{1,4^F\}$.
\item[(ii)] $A^D = A^mX^{m+1}$.
\end{enumerate}
\end{lemma}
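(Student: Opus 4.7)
The plan is to dualize the proof of Lemma \ref{lem3.4} by transposing the sides on which $X$ interacts with $A$. The defining relations of $X = A^{F,\ep}$, namely $A^{k+1}X = A^k$, $X^2A = X$, and $(FXA)^* = FXA$, mirror those of the $E$-weighted core-EP inverse with left and right interchanged. Correspondingly, Proposition \ref{lem3.2}(iii) supplies the collapsing identity $XA = X^mA^m$ for every $m \geq 1$, playing the role that Proposition \ref{lem3.2}(i) played in the original. As preparation, iterating $X^2A = X$ yields $X^{i+1}A = X^i$ for all $i \geq 1$ (and in particular $X = X^{m+1}A^m$), while iterating $A^{k+1}X = A^k$ yields $A^{j+1}X = A^j$ for every $j \geq k$.

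For part (i), to check the $\{1\}$-condition I would start from $A^m = A^{m+1}X$, rewrite $X$ as $X^2A = X(XA)$, substitute $XA = X^mA^m$ to get $A^m = A^{m+1}X^{m+1}A^m$, and finally reapply $A^{m+1}X = A^m$ to conclude $A^m = A^m X^m A^m$. The weighted condition $(FX^mA^m)^* = FX^mA^m$ is then immediate from $X^mA^m = XA$ together with the defining equation $(4^F)$.

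For part (ii), set $Y = A^m X^{m+1}$ and verify the three Drazin axioms. Commutativity $AY = YA$ reduces both sides to $A^m X^m$: on the one hand $AY = A^{m+1}X^{m+1} = (A^{m+1}X)X^m = A^m X^m$; on the other $YA = A^m X^{m+1}A = A^m X^{m-1}(X^2A) = A^m X^m$. For $YAY = Y$, I first establish $XAX = X$ from $XAX = (XA)X = X^{m+1}A^{m+1}X = X^{m+1}A^m = X$, and then compute $YAY = A^m X^{m+1}A^{m+1}X^{m+1} = A^m(XA)X^{m+1} = A^m(XAX)X^m = A^m X^{m+1} = Y$. Finally, using commutativity, $YA^{k+1} = A^{k+1}Y = A^{m+k+1}X^{m+1}$, and iterating $A^{j+1}X = A^j$ down from $j = m+k$ to $j = k$ collapses this to $A^k$.

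The main obstacle is purely bookkeeping: each identity ($XA = X^mA^m$, $X^{i+1}A = X^i$, $A^{j+1}X = A^j$, $XAX = X$) must be applied in the correct order so that the exponents telescope cleanly, and every invocation of $A^{j+1}X = A^j$ must keep $j \geq k$ so that the relation actually holds. No individual step is delicate, but because the sidedness is opposite to the $E$-case, it must be tracked carefully throughout to avoid slips that a purely formal transposition would hide.
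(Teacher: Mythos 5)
Your proof is correct and is exactly the argument the paper intends: the paper gives no separate proof of this lemma, stating only that it follows ``in similar manner'' from Lemma~\ref{lem3.4}, and your dualization (using Proposition~\ref{lem3.2}(iii) in place of (i) and tracking the flipped sidedness) reproduces that proof step for step. Note only that what you actually establish in part (i) is $X^m\in A^m\{1,4^F\}$, which is the correct dual of Lemma~\ref{lem3.4}(i) and what the subsequent results require; the statement's ``$X^m\in A\{1,4^F\}$'' is evidently a typo for $A^m\{1,4^F\}$.
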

We now have the following characterization of the class of $\{1, 3^E\}$-inverse. 
\begin{lemma}\label{lem3.5}
Let $A,~E\in\cnn$ with $ind(A)=k$.  For $m\geq k$  if $X^m,Y^m\in A^m\{1,3^E\}$, then $A^mX^m = A^mY^m$. 
\end{lemma}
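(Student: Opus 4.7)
The statement is an immediate specialization of Lemma \ref{lem27}, which already establishes that any two $\{1,3^E\}$-inverses of a matrix agree after left-multiplication by that matrix. The plan is simply to apply that lemma with the matrix $A$ replaced by $A^m$, and with $S = X^m$, $T = Y^m$.

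Concretely, I would first note that the hypotheses of Lemma \ref{lem27} (taken over the matrix $A^m \in \mathbb{C}^{n\times n}$ with the same weight $E$) are satisfied by assumption: both $X^m$ and $Y^m$ lie in $A^m\{1,3^E\}$, i.e., $A^m X^m A^m = A^m$, $(EA^m X^m)^* = EA^m X^m$, and similarly with $Y^m$. Then the computation from the proof of Lemma \ref{lem27}, run verbatim on $A^m$, $X^m$, $Y^m$, gives
\[
A^m X^m = A^m Y^m A^m X^m = E^{-1}(EA^m Y^m)^* E^{-1}(EA^m X^m)^* = E^{-1}(E A^m X^m A^m Y^m)^* = E^{-1}(EA^m Y^m)^* = A^m Y^m,
\]
where the inner cancellation uses $A^m X^m A^m = A^m$, and the hermiticity of $EA^m X^m$ and $EA^m Y^m$ is used to pass between the starred and unstarred forms.

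There is essentially no obstacle here: the lemma is literally Lemma \ref{lem27} applied to $A^m$, which is a valid input since $A^m \in \mathbb{C}^{n \times n}$ and $E \in \mathbb{C}^{n \times n}$ is still invertible and hermitian. The only thing worth flagging is that the hypothesis $m \geq k$ plays no role in the proof itself — it is recorded so that the nonemptiness of $A^m\{1,3^E\}$ is meaningful in the context of the preceding results (in particular, via Lemma \ref{lem3.4}(i), which shows $X^m \in A^m\{1,3^E\}$ when $X = A^{\ep,E}$). So the proof can be presented in one or two lines as a direct corollary, or alternatively written out by reproducing the chain of equalities from Lemma \ref{lem27} with $A$ replaced throughout by $A^m$.
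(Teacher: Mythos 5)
Your proposal is correct and is exactly what the paper does: the paper's entire proof of this lemma is the single line ``The proof follows from Lemma~\ref{lem27},'' i.e.\ the specialization of that lemma to the matrix $A^m$ with $S=X^m$, $T=Y^m$. Your written-out chain of equalities is a faithful (and verified) instantiation of the computation in Lemma~\ref{lem27}, and your remark that the hypothesis $m\geq k$ is not actually used in the argument is accurate.
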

The proof is follows from Lemma \ref{lem27}.

Similarly, we obtain the following result for $F$-weighted dual core-EP inverse.  
\begin{lemma}
Let $A,~F\in\cnn$, and $ind(A)=k$.  For $m\geq k$ if $X^m,Y^m\in A^m\{1,4^F\}$, then $X^mA^m = Y^mA^m.$ 
\end{lemma}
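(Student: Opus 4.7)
The plan is to reduce the claim immediately to the dual of Lemma \ref{lem27} (the unnumbered companion lemma stated right after it, which asserts that if $Y, Z \in A\{1, 4^F\}$, then $YA = ZA$). The hypothesis $X^m, Y^m \in A^m\{1, 4^F\}$ is precisely the hypothesis of that companion lemma applied to the matrix $A^m$ in place of $A$, with the same weight $F$. So I would simply invoke that lemma with $A$ replaced by $A^m$ to conclude $X^m A^m = Y^m A^m$.

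If one wanted to be fully self-contained rather than cite, the argument is a one-line mirror of the proof of Lemma \ref{lem27}. Writing $B = A^m$ for brevity, from $BX^mB = B = BY^mB$, $(FX^mB)^* = FX^mB$, and $(FY^mB)^* = FY^mB$, I would compute
\begin{align*}
X^m B &= X^m B Y^m B = F^{-1}(FX^mB)^* F^{-1}(FY^mB)^* \\
&= F^{-1}(FY^m B F^{-1} FX^m B)^* = F^{-1}(FY^m B X^m B)^* \\
&= F^{-1}(FY^m B)^* = Y^m B,
\end{align*}
which is exactly the desired identity $X^m A^m = Y^m A^m$.

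There is essentially no obstacle here: the statement is the $F$-weighted dual mirror of Lemma \ref{lem3.5}, and the only content of the proof is the observation that the $\{1, 4^F\}$-inverse analogue of Lemma \ref{lem27} applies equally well to any square matrix, in particular to $A^m$. So the proof will be a single sentence invoking the earlier dual lemma.
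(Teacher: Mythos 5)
Your proposal is correct and matches the paper's own route: the paper derives the $E$-weighted analogue (Lemma \ref{lem3.5}) directly from Lemma \ref{lem27} applied to $A^m$, and disposes of this $F$-weighted version by the same symmetry, exactly as you do by invoking the $\{1,4^F\}$ companion lemma with $A$ replaced by $A^m$. Your optional self-contained computation is also a faithful (and correct) mirror of the proof of Lemma \ref{lem27}, using the standing assumption that $F$ is hermitian and invertible.
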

The uniqueness of $E$-weighted core-EP inverse is discussed below.

 \begin{theorem}\label{eforeuni}
Let $A,~E\in\mathbb{C}^{n\times n}$ and $ind(A)=k$. If $A^{\ep,E}$ exist, then it is unique.
 \end{theorem}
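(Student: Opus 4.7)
The plan is to assume two matrices $X_1,X_2$ both satisfy the defining conditions $(6^k),(7),(3^E)$ of Definition \ref{eepdef} and show that $X_1-X_2$ is forced to lie in $N(A)\cap R(A^k)$, a subspace that is trivial under the index hypothesis. The first step would be to establish $AX_1 = AX_2$. Lemma \ref{lem3.4}(i) tells us that $X_1^m,X_2^m \in A^m\{1,3^E\}$ for every $m\ge k$, and Lemma \ref{lem3.5} then yields $A^m X_1^m = A^m X_2^m$. Since $X_i\in A\{7\}$, Proposition \ref{lem3.2}(i) gives $AX_i = A^m X_i^m$, and combining these identities we get $AX_1 = AX_2$, i.e.\ $A(X_1-X_2)=0$.

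Next, I would control the range. Both $X_i$ lie in $A\{6^k,7\}$, so Proposition \ref{lem3.2}(ii) forces $R(X_i)=R(A^k)$; in particular $R(X_1-X_2)\subseteq R(A^k)$. Combining this with the previous step, every column of $X_1-X_2$ belongs to $N(A)\cap R(A^k)$.

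To close, I would verify that $N(A)\cap R(A^k)=\{0\}$: if $v=A^k u$ and $Av=0$, then $A^{k+1}u=0$, so $u\in N(A^{k+1})=N(A^k)$ (by definition of $\mathrm{ind}(A)=k$), which forces $v=A^k u=0$. Hence $X_1-X_2=0$. The argument is essentially mechanical once Lemmas \ref{lem3.4} and \ref{lem3.5} are in hand; the only conceptual input is the transversality $N(A)\cap R(A^k)=\{0\}$ coming directly from the index hypothesis, so I foresee no substantive obstacle.
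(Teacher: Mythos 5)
Your proof is correct, and it rests on the same backbone as the paper's: both apply Lemma \ref{lem3.4}(i) to each candidate and then Lemma \ref{lem3.5} to obtain $A^mX_1^m=A^mX_2^m$, i.e.\ $AX_1=AX_2$. Where you diverge is in how the leading $A$ gets cancelled. The paper does it equationally, expanding $X=XAX=\cdots=X^{m+1}A^{2m}X^m=A^DA^mX^m$ via Lemma \ref{lem3.4}(ii), so that each candidate is exhibited as the same function of the common quantity $A^mX^m$; this also hands the paper the representation it reuses in Theorem \ref{thm3.12}. You instead argue structurally: $R(X_i)=R(A^k)$ from Proposition \ref{lem3.2}(ii), together with the transversality $N(A)\cap R(A^k)=\{0\}$ (from $N(A^{k+1})=N(A^k)$ at index $k$), which is precisely the statement that $A$ is injective on $R(A^k)$ --- the same fact the paper encodes by left-multiplying with $A^D$. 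Your route is marginally more elementary, avoiding the Drazin inverse and Lemma \ref{lem3.4}(ii) entirely, at the cost of a rank/null-space argument where the paper stays purely computational. One small caveat, which applies equally to the paper's own proof: Lemma \ref{lem3.4} and Proposition \ref{lem3.2} are being invoked for each of $X_1,X_2$ before uniqueness is known, which is legitimate only because their proofs use nothing beyond the three defining equations; this deserves a sentence when writing the argument up.
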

 \begin{proof}
 Suppose there exist two $E$-weighted core-EP inverses $X$ and $Y$ of $A$. Let $m\geq k$. Then by Proposition \ref{lem3.2} and Lemma \ref{lem3.4}, we obtain
  \begin{center}
 $ X= XAX = X(A^m)X^m = X^2A^{m+1}X^m =\cdots= X^{m+1}A^{2m}X^m=A^DA^mX^m$,  
  \end{center}
  and $Y=A^DA^mY^m$. Hence by Lemma \ref{lem3.5}, we have $X=A^DA^mX^m=A^DA^mY^m=Y$.
 \end{proof}

Similarly, the uniqueness of $F$-weighted dual core-EP inverse can be verified. 
 \begin{theorem}
Let $A,~F\in\mathbb{C}^{n\times n}$ and $ind(A)=k$. If  $A^{F,\ep}$ exist, then it is unique. 
\end{theorem}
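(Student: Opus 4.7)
The plan is to transpose the proof of Theorem~\ref{eforeuni} to the $F$-weighted dual setting, replacing each invocation of Proposition~\ref{lem3.2}, Lemma~\ref{lem3.4}, and Lemma~\ref{lem3.5} by its $F$-weighted dual analogue, each of which the excerpt states (without repeated proof) immediately after the corresponding $E$-weighted result.

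Fix $m \geq k$ and suppose $X$ and $Y$ both satisfy Definition~\ref{fepdef}. By Proposition~\ref{lem3.2}(iii) one has $XA = X^m A^m$, and the symmetric counterpart of Proposition~\ref{lem3.2}(ii) gives $XAX = X$ (this follows by the same kind of short calculation: $XAX = X^{k+1}A^{k+1}X = X^{k+1}A^k$, which then collapses to $X$ by iterated use of $X^2A = X$). Iterating $(8^k)$ by right-multiplication by $A$ yields $A^{m+1}X = A^m$ for every $m \geq k$. Starting from $X = XAX$, I would rewrite the left factor $XA$ as $X^m A^m$, and then repeatedly substitute $A^j = A^{j+1}X$ for $j = m, m+1, \dots, 2m-1$ to shift factors of $A$ rightward across $X$, producing
\[
X \;=\; X^m A^m X \;=\; X^m A^{m+1} X^2 \;=\; \cdots \;=\; X^m A^{2m} X^{m+1}.
\]
The dual of Lemma~\ref{lem3.4}(ii), namely $A^D = A^m X^{m+1}$, then collapses this chain to $X = X^m A^m A^D$, and the identical calculation gives $Y = Y^m A^m A^D$.

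To finish, I would invoke the dual of Lemma~\ref{lem3.4}(i), which places $X^m$ and $Y^m$ in $A^m\{1,4^F\}$, and then the dual of Lemma~\ref{lem3.5} to conclude $X^m A^m = Y^m A^m$. Right-multiplying by $A^D$ yields $X = Y$.

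There is no real obstacle; the whole argument is a symbol-by-symbol dualization of the proof of Theorem~\ref{eforeuni} with left- and right-multiplications swapped throughout. The one point warranting attention is the direction in which the $A$'s are telescoped: they must be pushed to the right so that $A^m X^{m+1}$ sits at the right end of the resulting product, since this is both the placement where the dual Lemma~\ref{lem3.4}(ii) recognizes $A^D$ and the placement that puts $A^D$ to the right of $X^m A^m$, where the cancellation furnished by the dual Lemma~\ref{lem3.5} actually delivers $X = Y$.
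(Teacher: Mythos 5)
Your argument is correct and is precisely the dualization the paper intends: the paper proves uniqueness only in the $E$-weighted case (Theorem~\ref{eforeuni}) and disposes of the $F$-weighted dual case with ``similarly,'' and your chain $X = X^m A^{2m} X^{m+1} = X^m A^m A^D$ followed by the dual of Lemma~\ref{lem3.5} is the exact mirror of that proof. The only blemish is the phrase ``right-multiplication by $A$'' where you mean left-multiplication (to pass from $A^{k+1}X=A^k$ to $A^{m+1}X=A^m$); the identity you use is nonetheless correct.
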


Now we discuss one of our important results, which gives a method of construction of the weighted core-EP inverse using a $\{1, 3\}$-inverse of matrix. The following result is easily follows from Lemma \ref{lem3.5} and Theorem \ref{eforeuni}.
 \begin{theorem}\label{thm3.12}
 Let $A,~E\in\mathbb{C}^{n\times n}$ and $ind(A)=k$. If $m$ is any positive integer satisfying $m\geq k$, and $A^m\{1,3^E\}$ is non-empty, then $A^{\ep,E}$ exists and $A^{\ep,E} = A^DA^mX$, where $X$ is a $\{1,3^E\}$ inverse of $A^m$.
 \end{theorem}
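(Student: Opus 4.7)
My plan is to pick any $X\in A^m\{1,3^E\}$ (nonempty by hypothesis), set $Y:=A^DA^mX$, and verify directly the three defining identities $(6^k)$, $(7)$, $(3^E)$ of Definition~\ref{eepdef}. Uniqueness of the resulting core-EP inverse is then supplied by Theorem~\ref{eforeuni}, while Lemma~\ref{lem3.5} guarantees that $A^mX$ (and hence $Y$) is independent of the chosen $X$, so the formula is well posed.

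Before the main verification I would record two elementary facts: (a) $A^D$ commutes with every power of $A$, and iterating $A^DA^{k+1}=A^k$ yields $A^DA^{j+1}=A^{j+1}A^D=A^j$ for all $j\ge k$, so in particular $A\cdot A^DA^m=A^m$; (b) the choice of $X$ supplies $A^mXA^m=A^m$ and $(EA^mX)^*=EA^mX$. Using (a), $AY=A\cdot A^DA^mX=A^mX$, which immediately gives $(3^E)$ because $EAY=EA^mX$ is Hermitian by (b). Condition $(7)$ then follows by commuting $A^D$ past $A^m$ and applying $A^mXA^m=A^m$:
\[
AY^2=(AY)Y=A^mX\cdot A^DA^mX=A^mX\cdot A^mA^DX=A^mA^DX=A^DA^mX=Y.
\]

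The slightly more delicate step, which I expect to be the main obstacle, is $(6^k)$. Here my plan is to first establish the auxiliary identity $A^mXA^{k+1}=A^{k+1}$. For $m=k$ this reads $A^kXA^{k+1}=(A^kXA^k)A=A^{k+1}$, which is immediate from the $\{1\}$-inverse identity. For $m\ge k+1$ I would proceed by descending induction on the exponent of the rightmost factor: starting from $A^mXA^m=A^m$, multiply on the right by $A^D$ and use $A^jA^D=A^{j-1}$ (valid for $j\ge k+1$) to obtain $A^mXA^{m-1}=A^{m-1}$, and repeat $m-k-1$ times to reach $A^mXA^{k+1}=A^{k+1}$. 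Once this holds,
\[
YA^{k+1}=A^DA^mXA^{k+1}=A^DA^{k+1}=A^k,
\]
which is $(6^k)$. This completes the verification and identifies $Y=A^DA^mX$ as $A^{\ep,E}$.
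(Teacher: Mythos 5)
Your proof is correct, and it is worth noting that it is actually more complete than what the paper offers. The paper gives no written proof of Theorem~\ref{thm3.12}: it merely asserts that the result ``easily follows from Lemma~\ref{lem3.5} and Theorem~\ref{eforeuni}.'' That route runs through Lemma~\ref{lem3.4} (which shows $(A^{\ep,E})^m\in A^m\{1,3^E\}$) and the computation inside the uniqueness proof (which yields $A^{\ep,E}=A^DA^m(A^{\ep,E})^m$), after which Lemma~\ref{lem3.5} lets one replace $(A^{\ep,E})^m$ by an arbitrary $X\in A^m\{1,3^E\}$. But that argument only identifies the formula \emph{assuming} $A^{\ep,E}$ already exists; it does not by itself deliver the existence assertion in the statement. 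Your direct verification that $Y=A^DA^mX$ satisfies $(6^k)$, $(7)$ and $(3^E)$ --- in particular the descending induction giving $A^mXA^{k+1}=A^{k+1}$, and the observation $AY=A^mX$ which makes $(3^E)$ immediate --- is exactly the missing existence argument, and each step checks out (the identities $A^DA^{j+1}=A^j$ for $j\ge k$ and $A^mXA^m=A^m$ are used only where they are valid). Your appeal to Lemma~\ref{lem3.5} for independence of the choice of $X$ and to Theorem~\ref{eforeuni} for uniqueness then matches the paper's intent. In short: same ingredients, but you supply the verification the paper skips.
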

 Similarly, construction of the $F$-weighted dual core-EP inverse through  $\{1, 4^F\}$-inverse of a matrix is presented below. 
 
 \begin{theorem}\label{thm3.13}
  Let $A,~F\in\mathbb{C}^{n\times n}$ and $ind(A)=k$. 
  If $m$ is any positive integer satisfying $m\geq k$, and $A^m\{1,4^F\}$ is non-empty, then $A^{F,\ep}$ exists and $A^{F,\ep} = YA^mA^D$, where $Y$ is a $\{1,4^F\}$ inverse of $A^m$.
\end{theorem}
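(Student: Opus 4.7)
The plan is to mirror the proof of Theorem \ref{thm3.12}: set $Z := YA^m A^D$ and verify directly the three defining conditions $(8^k)$, $(9)$, $(4^F)$ of Definition \ref{fepdef}. The uniqueness of the $F$-weighted dual core-EP inverse, which has already been established in this section as the counterpart to Theorem \ref{eforeuni}, then certifies $Z = A^{F,\ep}$, yielding both existence and the stated formula in one stroke.

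For $(4^F)$, since $m \geq k$ implies $A^{m+1} A^D = A^m$, we get $ZA = YA^m A^D A = YA^{m+1} A^D = YA^m$, so $FZA = FYA^m$ is Hermitian by the hypothesis $Y \in A^m\{1, 4^F\}$. For $(8^k)$, I would first compute $A^{m+1} Z = A (A^m Y A^m) A^D = A^{m+1} A^D = A^m$, and then descend the exponent from $m+1$ to $k+1$ by pre-multiplying with $(A^D)^{m-k}$ and invoking the identity $A^p (A^D)^{p-k} = A^k$ for $p \geq k$ (a consequence of the commutativity of $A$ and $A^D$ together with the idempotency of $AA^D$); this delivers $A^{k+1} Z = A^k$.

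Condition $(9)$ is the delicate one. Expanding $Z^2 A = Z \cdot ZA = YA^m A^D \cdot YA^m$ reduces the problem to proving the intermediate identity $A^m A^D YA^m = A^m A^D$, since left-multiplication by $Y$ would then give $Z^2 A = YA^m A^D = Z$ directly. For $m \geq k+1$ this is routine: pre-multiply $A^m Y A^m = A^m$ by $A^D$ and apply $A^D A^m = A^{m-1}$. I expect the main obstacle to be the boundary case $m = k$, where $A^D A^k$ does not collapse to a lower power of $A$; the remedy is to exploit the commutativity $A^k A^D = A^D A^k$ so that $A^k A^D Y A^k = A^D (A^k Y A^k) = A^D A^k$, using only $A^k Y A^k = A^k$. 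With all three conditions verified, $Z$ is the $F$-weighted dual core-EP inverse of $A$.
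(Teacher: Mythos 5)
Your proof is correct. The paper itself offers no argument for Theorem \ref{thm3.13}: it is introduced with ``similarly,'' and even the primal statement, Theorem \ref{thm3.12}, is dispatched with the one-line remark that it ``easily follows from Lemma \ref{lem3.5} and Theorem \ref{eforeuni}.'' That cited route only shows that \emph{if} $A^{F,\ep}$ exists it must equal $YA^mA^D$ (via the dual of Lemma \ref{lem3.4} plus the dual of Lemma \ref{lem3.5}); the existence half still requires exactly the direct verification of $(8^k)$, $(9)$, $(4^F)$ that you carry out, so your proposal is in fact more complete than what the paper records. All three verifications check out: $ZA=YA^m$ gives $(4^F)$ immediately; $A^{m+1}Z=A^m$ followed by premultiplication with $(A^D)^{m-k}$ gives $(8^k)$; and the key identity $A^mA^DYA^m=A^mA^D$ gives $(9)$. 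One small remark: the ``boundary case $m=k$'' you flag in condition $(9)$ is not actually a separate case. The computation $A^mA^DYA^m=A^D\left(A^mYA^m\right)=A^DA^m=A^mA^D$ uses only the commutativity $A^DA^m=A^mA^D$ and the relation $A^mYA^m=A^m$, and therefore works uniformly for every $m\geq k$ (indeed for every $m\geq 1$); there is no need to invoke $A^DA^m=A^{m-1}$ at all, so the two branches of your argument for $(9)$ collapse into one.
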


 In view of Theorem \ref{thm3.12} and \ref{thm3.13}, we state the following as a corollary for construction of weighted core-EP inverse.  

 \begin{corollary}\label{cor3.14}
  Let $A,~E,~F\in\mathbb{C}^{n\times n}$ and $ind(A)=k$. For $m\geq k$, if $(A^m)^\dagger_{E,F}$ exists, then  
  \begin{enumerate}
  \item[(i)] $A^{\ep,E}=A^DA^m(A^m)^{\dagger}_{E,I}$.
      \item[(ii)] $AA^{\ep,E}=A^mX=A^m(A^m)^{\dagger}_{E,I}$, $X\in A^m\{1,3^E\}$.
      \item[(iii)] $A^{F,\ep}=(A^m)^{\dagger}_{I,F}A^mA^D$.
      \item[(iv)] $A^{F,\ep}A=YA^m=(A^m)^{\dagger}_{I,F} A^m$,  $Y\in A^m\{1,4^F\}$.
  \end{enumerate} 
 \end{corollary}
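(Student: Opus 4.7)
The plan is to deduce all four statements by direct specialization of Theorems~\ref{thm3.12} and \ref{thm3.13}, using the fact that the weighted Moore-Penrose inverses appearing on the right-hand sides automatically supply the required $\{1,3^E\}$- and $\{1,4^F\}$-inverses of $A^m$.

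First I would note that by the defining conditions of the generalized weighted Moore-Penrose inverse, $(A^m)^{\dagger}_{E,I}$ satisfies $(1)$ and $(3^E)$, so it lies in $A^m\{1,3^E\}$; symmetrically, $(A^m)^{\dagger}_{I,F}$ satisfies $(1)$ and $(4^F)$, so it lies in $A^m\{1,4^F\}$. (Existence of $(A^m)^{\dagger}_{E,F}$ makes each of these classes non-empty by taking the appropriate weight to be the identity, so the hypothesis of the corollary is compatible with invoking both of the preceding theorems.) With these memberships in hand, part $(i)$ is immediate from Theorem~\ref{thm3.12} applied with $X = (A^m)^{\dagger}_{E,I}$, and part $(iii)$ follows analogously from Theorem~\ref{thm3.13} applied with $Y = (A^m)^{\dagger}_{I,F}$.

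For part $(ii)$, I would start from the formula in $(i)$ and compute
\[
AA^{\ep,E} \;=\; A\cdot A^{D}A^{m}(A^m)^{\dagger}_{E,I}.
\]
Using the Drazin commutativity $AA^{D}=A^{D}A$ together with $A^{D}A^{m+1}=A^{m}$ (which holds because $m\ge k=\operatorname{ind}(A)$, and which follows by a short induction on $m-k$ from $A^{D}A^{k+1}=A^{k}$), this collapses to $A^{m}(A^m)^{\dagger}_{E,I}$. To upgrade this to the statement that $A^{m}X = A^{m}(A^m)^{\dagger}_{E,I}$ for every $X\in A^{m}\{1,3^E\}$, I would invoke Lemma~\ref{lem27} with $A$ replaced by $A^{m}$. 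Part $(iv)$ is established by the mirror computation: starting from $(iii)$ and applying $A^{m+1}A^{D}=A^{m}$ on the right, followed by the $\{1,4^F\}$-analog of Lemma~\ref{lem27} already recorded in the excerpt.

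There is essentially no serious obstacle here; the corollary is a bookkeeping consequence of Theorems~\ref{thm3.12}--\ref{thm3.13}. The only mildly delicate point is justifying the Drazin simplifications $A^{D}A^{m+1}=A^{m}$ and $A^{m+1}A^{D}=A^{m}$ for $m\ge k$, but these are immediate from the defining relations $(1^{k})$, $(2)$, $(5)$ of the Drazin inverse combined with a one-line induction.
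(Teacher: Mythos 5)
Your proposal is correct and matches the paper's intent exactly: the paper offers no separate proof, stating only that the corollary follows ``in view of Theorem~\ref{thm3.12} and \ref{thm3.13},'' and your specialization $X=(A^m)^{\dagger}_{E,I}$, $Y=(A^m)^{\dagger}_{I,F}$ together with $A^{D}A^{m+1}=A^{m}$ and Lemma~\ref{lem27} (applied to $A^m$) is precisely that derivation. The only point worth making explicit is that the existence of $(A^m)^{\dagger}_{E,I}$ and $(A^m)^{\dagger}_{I,F}$ under the stated hypothesis is obtained via Proposition~\ref{prop3.25}, since $(A^m)^{\dagger}_{E,F}\in A^m\{1,3^E\}\cap A^m\{1,4^F\}$ and $(A^m)^{\dagger}$ supplies the complementary $\{1,4\}$- or $\{1,3\}$-inverse.
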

 
In conjunction with Lemma \ref{lem2.6} and Corollary \ref{cor3.14}, we obtain the following representations for the weighted core-EP inverse through weighted core and weighted Moore-Penrose inverse. 

\begin{proposition}
 Let $E,F\in\mathbb{C}^{n\times n}$ be positive definite matrices and $A\in\cnn$ with $ind(A)=k$. Then \begin{enumerate}
      \item[(i)] $A^{\ep,E}=A^m(A^{m+1})^{\core,E}=A^m(A^{m+1})^{\dagger}_{E,I}$.
      \item[(ii)] $A^{F,\ep}=(A^{m+1})^{F,\core}A^m=(A^{m+1})^{\dagger}_{I,F}A^m$,
  \end{enumerate} 
  where $m$ is any positive integer satisfying  $m\geq k$.
\end{proposition}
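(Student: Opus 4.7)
The plan is to apply Theorem \ref{thm3.12} (for (i)) and Theorem \ref{thm3.13} (for (ii)) with the exponent $m$ replaced by $m+1$; since $m \geq k$ implies $m+1 \geq k$, the hypotheses still hold, and by Lemma \ref{lemDraz} the matrix $A^{m+1}$ has index $1$, so both $(A^{m+1})^{\core,E}$ and $(A^{m+1})^{F,\core}$ are well defined by the earlier machinery.

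First, I would verify that both $(A^{m+1})^{\dagger}_{E,I}$ and $(A^{m+1})^{\core,E}$ lie in $A^{m+1}\{1,3^E\}$. The weighted Moore--Penrose inverse $(A^{m+1})^{\dagger}_{E,I}$ exists because $E$ is positive definite and trivially satisfies the two required equations. For the weighted core inverse, Lemma \ref{lem2.6} (applied to $A^{m+1}$, which has index $1$) gives the factorisation
\begin{equation*}
(A^{m+1})^{\core,E} = (A^{m+1})^{\#}\,A^{m+1}\,(A^{m+1})^{\dagger}_{E,I},
\end{equation*}
so the relation $A^{m+1}(A^{m+1})^{\#}A^{m+1}=A^{m+1}$ combined with property $(3^E)$ of the Moore--Penrose factor gives both $A^{m+1}Y A^{m+1}=A^{m+1}$ and $(EA^{m+1}Y)^{*}=EA^{m+1}Y$ in one line.

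Next I invoke Theorem \ref{thm3.12} with the exponent $m+1$, which produces $A^{\ep,E} = A^D A^{m+1} X$ for any $X\in A^{m+1}\{1,3^E\}$. Since $m\geq k$, the Drazin identity $A^D A^{k+1}=A^k$ and the commutativity of $A$ with $A^D$ yield $A^D A^{m+1}=A^m$. Substituting the two choices $X=(A^{m+1})^{\core,E}$ and $X=(A^{m+1})^{\dagger}_{E,I}$ into this expression delivers the two equalities in (i). Part (ii) is the entirely symmetric statement: the same scheme with Theorem \ref{thm3.13}, the dual identity $A^{m+1}A^D=A^m$, and the analogous verification that $(A^{m+1})^{F,\core}\in A^{m+1}\{1,4^F\}$ (through its dual factorisation in terms of $(A^{m+1})^{\#}$ and $(A^{m+1})^{\dagger}_{I,F}$) completes the proof.

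I do not anticipate a serious obstacle here; the argument is essentially bookkeeping on top of Theorems \ref{thm3.12}--\ref{thm3.13} together with Lemmas \ref{lemDraz} and \ref{lem2.6}. The only step that demands a moment of care is checking that the weighted core inverse of $A^{m+1}$ is actually a $\{1,3^E\}$-inverse, and this is immediate from its explicit factorisation.
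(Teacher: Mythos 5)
Your argument is correct and follows essentially the same route as the paper: the paper likewise reduces everything to Theorem \ref{thm3.12} (via its Corollary \ref{cor3.14}), uses Lemma \ref{lemDraz} together with the factorisation $(A^{m+1})^{\core,E}=(A^{m+1})^{\#}A^{m+1}(A^{m+1})^{\dagger}_{E,I}$ from Lemma \ref{lem2.6}, and the identity $A^DA^{m+1}=A^m$. The only cosmetic difference is that the paper first computes $A^m(A^{m+1})^{\core,E}=A^m(A^{m+1})^{\dagger}_{E,I}$ directly and then identifies this common value with $A^{\ep,E}$, whereas you substitute the two $\{1,3^E\}$-inverses into Theorem \ref{thm3.12} separately; both are the same bookkeeping.
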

\begin{proof}
$(i)$ From Lemma \ref{lemDraz} and \ref{lem2.6}, we get 
\begin{eqnarray}\label{eqn33}
        \nonumber
        	A^m(A^{m+1})^{\core,E}&=A^m\left(A^{m+1}\right)^{\#}A^{m+1}\left(A^{m+1}\right)^\dagger_{E,I}=A^m\left(A^D\right)^{m+1}A^{m+1}\left(A^{m+1}\right)^\dagger_{E,I} \\
		&= A^DA^{m+1}\left(A^{m+1}\right)^\dagger_{E,I}=A^{m}\left(A^{m+1}\right)^\dagger_{E,I}.
\end{eqnarray}
Using equation \eqref{eqn33} and Corollary \ref{cor3.14} $(i)$, we obtain
\begin{center}
  $A^{\ep,E}=A^m(A^{m+1})^{\core,E}=A^m(A^{m+1})^{\dagger}_{E,I}$.  
\end{center} 
$(ii)$ In similar way, we can show the result.
\end{proof}

We next discuss a necessary and sufficient condition in connection to the characterization of $\{1,3^E\}$ and $\{1,4^F\}$ inverses. 

 \begin{proposition}\label{prop3.15}
 Let $A,~E\in\mathbb{C}^{n\times n}$. Then $A\{1,3^E\}$  is non-empty if and only if $A = ZA^*EA$ for some $Z\in\cnn$. 
 \end{proposition}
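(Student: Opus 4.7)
The plan is to prove both implications by direct computation, using the fact that $E$ is hermitian and invertible. The key observation is that the transformation $Y \leftrightarrow Z$ given by $Z = E^{-1}Y^*$ (equivalently $Y = Z^*E$) provides the bridge between the two conditions.

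For the forward direction, suppose $Y \in A\{1,3^E\}$, so $AYA = A$ and $(EAY)^* = EAY$. Using $E^* = E$, the hermicity condition reads $EAY = Y^*A^*E$. Post-multiplying by $A$ and applying $AYA = A$ yields
\[
EA = EAYA = Y^*A^*EA,
\]
so $A = E^{-1}Y^*A^*EA$. Thus $Z := E^{-1}Y^*$ witnesses $A = ZA^*EA$.

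For the reverse direction, given $A = ZA^*EA$, I propose the candidate $Y := Z^*E$, motivated by inverting the formula above. The crucial intermediate step is to derive the commutation identity $AZ^* = ZA^*$. To get this, take the conjugate transpose of the hypothesis to obtain $A^* = A^*EAZ^*$, then left-multiply by $Z$ and regroup: $ZA^* = (ZA^*EA)Z^* = AZ^*$, where the last equality uses the hypothesis $ZA^*EA = A$. With this identity available, both remaining verifications collapse to one line each: $AYA = AZ^*EA = ZA^*EA = A$, and $EAY = EAZ^*E = EZA^*E$, which is hermitian (since $E^* = E$) and hence equals $(EAY)^*$.

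The main obstacle, though a mild one, is recognizing the identity $AZ^* = ZA^*$; it is not in plain sight from the hypothesis alone, but emerges cleanly after one conjugate-transpose and one left-multiplication by $Z$. Once this is noticed, the proof is essentially bookkeeping, and no extra hypothesis on $A$ (such as invertibility or a given index) is required.
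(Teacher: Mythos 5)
Your proof is correct and follows essentially the same route as the paper's: the same witness $Z=E^{-1}Y^*$ in the forward direction, the same candidate $Y=Z^*E$ in the converse, and the same key identity $AZ^*=ZA^*$ (which the paper obtains by noting $AZ^*=ZA^*EAZ^*=(ZA^*)E(ZA^*)^*$ is hermitian, a trivially equivalent derivation to yours). No issues.
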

 \begin{proof}
Let $Y\in A\{1,3^E\}$. Then 
 \begin{center}
  $A = AYA = E^{-1}(EAY)^*A =E^{-1}Y^*A^*EA = ZA^*EA $, where $Z = E^{-1}Y^*\in\cnn$.
\end{center}
Conversely, let $A = ZA^*EA$ and $Y=Z^*E$. Then      $AZ^* = ZA^*EAZ^*=(ZA^*EAZ^*)^*= (AZ^*)^*$. Further, $AYA = AZ^*EA=(AZ^*)^*EA = ZA^*EA = A$ and 
\begin{center}
   $(EAY)^*=(EAZ^*E)^* = EZA^*E = E(AZ^*)^*E = EAZ^*E=EAY$.  
\end{center}
  Thus $Y$ is an $\{1,3^E\}$ inverse of $A$.
 \end{proof}

 \begin{proposition}\label{prop3.18}
 Let $A,~F\in\cnn$. Then $A\{1,4^F\}$  is non-empty if and only if $A = AF^{-1}A^*X$ for some $X\in\cnn$. 
 \end{proposition}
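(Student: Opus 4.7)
The plan is to prove this by mirroring the argument of Proposition \ref{prop3.15}, which is essentially the dual statement. The symmetry between $\{1, 3^E\}$ and $\{1, 4^F\}$ suggests that every step can be transposed: left actions by $E$ become right actions by $F$, and conjugate transposes enter on the opposite side.

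For the forward direction, I would take $Y \in A\{1, 4^F\}$, so that $AYA = A$ and $(FYA)^* = FYA$. Using hermiticity of $F$ (hence of $F^{-1}$) and the equality $(FYA)^* = FYA$, I can write $YA = F^{-1}(FYA) = F^{-1}(FYA)^* = F^{-1}A^*Y^*F$. Substituting into $A = AYA$ gives $A = AF^{-1}A^*Y^*F = AF^{-1}A^*X$ with $X := Y^*F \in \cnn$, which is the required identity.

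For the converse, assume $A = AF^{-1}A^*X$ for some $X \in \cnn$, and set $Y := F^{-1}X^*$. The first key step is to verify that $X^*A$ is hermitian. Taking the conjugate transpose of the hypothesis gives $A^* = X^*AF^{-1}A^*$ (since $F^{-1}$ is hermitian), whence
\begin{equation*}
A^*X = X^*AF^{-1}A^*X = X^*(AF^{-1}A^*X) = X^*A,
\end{equation*}
so $(X^*A)^* = X^*A$. Once this is in hand, checking the two defining conditions is immediate: $FYA = FF^{-1}X^*A = X^*A$ is hermitian, yielding $(FYA)^* = FYA$; and $AYA = AF^{-1}X^*A = AF^{-1}A^*X = A$ where the middle equality uses $X^*A = A^*X$. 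Thus $Y \in A\{1, 4^F\}$.

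I do not expect any serious obstacle here — the argument is a direct transposition of the proof of Proposition \ref{prop3.15}, with the only mild subtlety being keeping track of which side $F^{-1}$ acts on and exploiting $F^* = F$ to move conjugate transposes past it.
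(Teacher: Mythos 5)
Your proof is correct and follows essentially the route the paper intends: the paper omits the proof of Proposition \ref{prop3.18} precisely because it is the transposed dual of Proposition \ref{prop3.15}, and your argument (with $X = Y^*F$ in the forward direction and $Y = F^{-1}X^*$ in the converse, after checking that $X^*A$ is hermitian) is exactly that dualization. The only cosmetic difference is that you derive hermiticity of $X^*A$ via $A^*X = X^*A$, whereas the paper's template observes directly that the corresponding product $X^*AF^{-1}A^*X$ is manifestly self-adjoint; the two observations are equivalent.
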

 
 In view of the above Proposition \ref{prop3.15}, we obtain the following necessary and sufficient condition for  $\{1,3^E\}$ inverse.

  \begin{lemma}\label{lem3.17}
   Let $A,~E\in\cnn$. Then $A = X(A^*)^2EA$  for some $X\in\cnn$ if and only if $A = YA^*EA = A^2Z$ for some $Y, Z\in\cnn$.
 \end{lemma}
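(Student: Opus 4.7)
The plan is to establish the equivalence by direct algebraic manipulation: each direction requires one well-chosen substitution, supplemented by a short rank--range step in the forward direction.

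For the $(\Leftarrow)$ direction, assume $A = YA^*EA$ and $A = A^2Z$. I would take the Hermitian conjugate of the second equation to obtain $A^* = Z^*(A^*)^2$, then substitute this expression for the single $A^*$ appearing in the first equation. The result collapses to $A = (YZ^*)(A^*)^2EA$, so $X := YZ^*$ provides the required witness.

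For the $(\Rightarrow)$ direction, assume $A = X(A^*)^2EA$. The first conclusion $A = YA^*EA$ is immediate by regrouping $(A^*)^2 = A^* \cdot A^*$ and setting $Y := XA^*$. The substantive content is to produce a $Z$ satisfying $A = A^2Z$, equivalently, to show $R(A) \subseteq R(A^2)$. My strategy is to first take the Hermitian conjugate of the hypothesis, using $E = E^*$, to get $A^* = A^*EA^2X^*$, and then substitute this back in for \emph{one} of the two $A^*$ factors in $A = X(A^*)^2EA$. After multiplying out, this yields a factorization $A = (XA^*E)\,A^2\,(X^*A^*EA)$, i.e.\ $A = PA^2Q$. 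From this, $\mathrm{rank}(A) \leq \mathrm{rank}(A^2)$; combined with the automatic reverse inequality the ranks are equal, and together with $R(A^2) \subseteq R(A)$ this forces $R(A^2) = R(A)$. A suitable $Z$ is then extracted column by column.

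The main obstacle is identifying the right substitution that produces a \emph{central} $A^2$ factor. Naive iteration of $A = X(A^*)^2EA$, or right-multiplication by $A$, only yields factorizations with $A$ or $A^2$ on the outside, which is not enough to compare $R(A)$ with $R(A^2)$. Replacing exactly one $A^*$ by its conjugate-transposed form $A^*EA^2X^*$ is the decisive step, since it makes two copies of $A$ sit adjacently and coalesce into $A^2$; after that, the rank--range argument is routine.
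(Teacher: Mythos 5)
Your proof is correct, and the backward direction coincides with the paper's (conjugating $A=A^2Z$ and substituting into $A=YA^*EA$ to get $X=YZ^*$). Where you genuinely diverge is in producing $Z$ with $A=A^2Z$ in the forward direction. The paper leans on Proposition \ref{prop3.15}: from $A=YA^*EA$ with $Y=XA^*$ it extracts the explicit $\{1,3^E\}$-inverse $Y^*E=AX^*E$, whose leading factor $A$ immediately gives $A=A(AX^*E)A=A^2(X^*EA)$, i.e.\ a concrete $Z=X^*EA$. You instead conjugate the hypothesis to get $A^*=A^*EA^2X^*$, substitute it for one $A^*$ factor to obtain the sandwich $A=(XA^*E)\,A^2\,(X^*A^*EA)$, and then argue $\operatorname{rank}(A)=\operatorname{rank}(A^2)$, hence $R(A)=R(A^2)$ and the existence of some $Z$. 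Both arguments are valid; the trade-off is that your route is self-contained (no appeal to the $\{1,3^E\}$ machinery) but non-constructive and tied to finite-dimensional rank counting, while the paper's route yields an explicit $Z$ and would survive in the ring-theoretic setting the paper is modelled on. It is worth noting that your own sandwich identity already contains the paper's conclusion implicitly: the left factor $XA^*E$ satisfies $A(XA^*E)A=A$ by the Hermitian-projection computation in Proposition \ref{prop3.15}, so you could have read off $Z=X^*EA$ (up to relabelling) without the rank step.
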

 \begin{proof}
 Let  $A = X(A^*)^2EA$. Then by taking $Y=XA^*$, we have $A= YA^*EA$. Applying Proposition \ref{prop3.15}, we have $Y^*E=AX^*E\in A\{1,3^E\}$. Thus 
 \begin{center}
     $A =  A(AX^*E)A =  A^2X^*EA = A^2Z$, where $Z=X^*EA\in\cnn$.
 \end{center}
 Conversely, let $A = YA^*EA = A^2Z$ for some $Y, Z\in\cnn$. Then $A = YA^*EA = Y(A^2Z)^*EA = YZ^*(A^2)^*EA  = X(A^2)^*EA$, where $X=YZ^*$.
 \end{proof}
 At the same time, we get the following result for 
 $\{1,4^F\}$ inverse through the Proposition \ref{prop3.18}.

 \begin{lemma}
  Let $A,~F\in\cnn$. Then $A = AF^{-1}(A^*)^2S$  for some $S\in\cnn$ if and only if $A = AF^{-1}A^*U= VA^2$ for some $U,V\in\cnn$.
 \end{lemma}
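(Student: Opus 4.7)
The plan is to mirror the proof of Lemma~\ref{lem3.17}, replacing Proposition~\ref{prop3.15} by Proposition~\ref{prop3.18} and the class $\{1,3^E\}$ by $\{1,4^F\}$; the roles of left and right (and of $E$ versus $F^{-1}$) are swapped throughout. For the forward direction, assume $A = AF^{-1}(A^*)^2 S$. I would first peel off one copy of $A^*$ by setting $U = A^*S$, so the hypothesis rewrites as $A = AF^{-1}A^*U$. Proposition~\ref{prop3.18} then guarantees $A\{1,4^F\} \neq \emptyset$, and its proof (dual to that of Proposition~\ref{prop3.15}) actually produces the explicit element $Y = F^{-1}U^* = F^{-1}S^*A \in A\{1,4^F\}$. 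The identity $AYA = A$ then rearranges to
\[
A \;=\; A\bigl(F^{-1}S^*A\bigr)A \;=\; \bigl(AF^{-1}S^*\bigr)A^2,
\]
so that both $A = AF^{-1}A^*U$ (with $U = A^*S$) and $A = VA^2$ (with $V = AF^{-1}S^*$) hold.

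For the converse, suppose $A = AF^{-1}A^*U = VA^2$. Taking adjoints of the second identity (using that $F^{-1}$ is Hermitian) gives $A^* = (A^*)^2 V^*$, and substituting this into the first identity yields
\[
A \;=\; AF^{-1}A^*U \;=\; AF^{-1}(A^*)^2 V^*U \;=\; AF^{-1}(A^*)^2 S,
\]
with $S = V^*U \in \cnn$, exactly the required form.

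The main obstacle is that Proposition~\ref{prop3.18} is stated only as a nonemptiness assertion, so I need to extract the explicit $\{1,4^F\}$-inverse $Y = F^{-1}X^*$ (valid whenever $A = AF^{-1}A^*X$) by dualizing the formula $Y = Z^*E$ from Proposition~\ref{prop3.15}. Verifying it is brief: taking adjoints of $A = AF^{-1}A^*X$ gives $A^* = X^*AF^{-1}A^*$, and right-multiplying by $X$ yields $A^*X = X^*A$; hence $FYA = X^*A$ is Hermitian, and $AYA = AF^{-1}X^*A = AF^{-1}A^*X = A$. Once this small verification is in hand, both directions reduce to routine substitutions.
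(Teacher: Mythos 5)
Your proof is correct and follows essentially the same route the paper intends: it is the exact dualization of the paper's proof of Lemma~\ref{lem3.17}, using Proposition~\ref{prop3.18} (with the explicit $\{1,4^F\}$-inverse $F^{-1}X^*$, which you verify correctly using the hermiticity of $F$) in place of Proposition~\ref{prop3.15}. The paper omits this proof precisely because it is this routine dual argument, and your substitutions $U=A^*S$, $V=AF^{-1}S^*$, $S=V^*U$ match the dual of the paper's choices.
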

 In view of Theorem \ref{thm3.12}, Proposition \ref{prop3.15}, and Lemma \ref{lem3.17}, we have the following  sufficient condition for $E$-weighted core-EP inverse.

 \begin{theorem}
 Let $A,~E\in\mathbb{C}^{n\times n}$ and $ind(A)=k$. If $A^k = X((A^k)^*)^2EA^k$  for some $X\in\cnn$, then $A^{\ep,E}=A^DA^{2k}X^*E$. 
 \end{theorem}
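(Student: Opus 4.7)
The plan is to chain together the three tools developed immediately before this theorem: Theorem \ref{thm3.12}, Proposition \ref{prop3.15}, and Lemma \ref{lem3.17}, applied to the matrix $B=A^k$ rather than to $A$ itself.

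First, I would invoke Lemma \ref{lem3.17} with $B=A^k$ in place of $A$. The hypothesis $A^k = X((A^k)^*)^2 E A^k$ is exactly the form $B = X(B^*)^2 E B$, so the lemma (really its proof, which I would briefly re-run with the substitution $Y := XB^* = X(A^*)^k$) yields
$$A^k \;=\; Y\,(A^k)^* E\,A^k \quad\text{with } Y = X(A^*)^k.$$
This puts the power $A^k$ into the factored form required by Proposition \ref{prop3.15}.

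Next, applying Proposition \ref{prop3.15} to $A^k$ with this $Y$ in the role of $Z$, the explicit $\{1,3^E\}$-inverse produced by that proposition is $Y^*E$. Computing,
$$Y^*E \;=\; \bigl(X(A^*)^k\bigr)^* E \;=\; A^k X^* E,$$
so $A^k X^* E \in A^k\{1,3^E\}$. In particular $A^k\{1,3^E\}$ is non-empty.

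Finally, I would apply Theorem \ref{thm3.12} with $m=k$ and the $\{1,3^E\}$-inverse just obtained, giving
$$A^{\ep,E} \;=\; A^D A^k \cdot (A^k X^* E) \;=\; A^D A^{2k} X^* E,$$
which is the claimed formula. I do not expect any serious obstacle here: the proof is essentially a bookkeeping chain, and the only step requiring mild care is the substitution $B = A^k$ in Lemma \ref{lem3.17}, where one must verify that the same algebraic manipulation used in the lemma's proof goes through verbatim after replacing $A^*$ by $(A^k)^*$ and $A$ by $A^k$.
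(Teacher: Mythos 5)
Your argument is correct and is essentially identical to the paper's own proof: both apply Lemma \ref{lem3.17} to $A^k$ to obtain $A^k = Y(A^k)^*EA^k$ with $Y=X(A^k)^*$, then use Proposition \ref{prop3.15} to exhibit $A^kX^*E$ as a $\{1,3^E\}$-inverse of $A^k$, and finally invoke Theorem \ref{thm3.12} with $m=k$. Your extra care in re-running the substitution $B=A^k$ inside the lemma is fine but not needed, since the lemma holds for an arbitrary square matrix.
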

 \begin{proof}
 Let $A^k = X((A^k)^*)^2EA^k$ for some $X\in\cnn$. Then by Lemma \ref{lem3.17}, we obtain $A^k=Y(A^k)^*EA^k$ for some $Y=X(A^k)^*\in\cnn$. Using Proposition \ref{prop3.15}, we have $A^kX^*E\in A^k\{1,3^E\}$. Thus by applying Theorem \ref{thm3.12}, we get $A^{\ep,E}=A^DA^{2k}X^*E$.
 \end{proof}

In view of $F$-weighted dual core-EP inverse can be written as follow. 
 
 \begin{theorem}
 Let $A,~F\in\mathbb{C}^{n\times n}$ and $ind(A)=k$. If $A^k = A^kF^{-1}((A^k)^*)^2Z$  for some $Z\in\cnn$, then $A^{F,\ep} = F^{-1}Z^*A^{2k}A^D$.
 \end{theorem}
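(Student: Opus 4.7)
The plan is to mirror the proof of the preceding $E$-weighted core-EP theorem almost line for line, replacing each ingredient by its $F$-dual counterpart that has already been recorded earlier in the paper. Starting from the hypothesis $A^k = A^k F^{-1}((A^k)^*)^2 Z$, I would first set $U = (A^k)^* Z$, which immediately gives
\begin{equation*}
A^k \;=\; A^k F^{-1} (A^k)^* U.
\end{equation*}
This is the dual of the factorisation step carried out inside Lemma \ref{lem3.17}, applied to $A^k$, and puts the hypothesis in precisely the form demanded by Proposition \ref{prop3.18}.

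The second step is to extract an explicit $\{1,4^F\}$-inverse of $A^k$ from that factorisation. The sufficiency direction of Proposition \ref{prop3.15} turns a factorisation $A = Z A^* E A$ into the concrete $\{1,3^E\}$-inverse $Z^*E$; the symmetric construction takes the factorisation $A^k = A^k F^{-1} (A^k)^* U$ to the $\{1,4^F\}$-inverse $F^{-1} U^*$. Since $U^* = ((A^k)^* Z)^* = Z^* A^k$, this gives $F^{-1} Z^* A^k \in A^k\{1,4^F\}$. Finally, feeding this into Theorem \ref{thm3.13} yields
\begin{equation*}
A^{F,\ep} \;=\; (F^{-1} Z^* A^k)\, A^k A^D \;=\; F^{-1} Z^* A^{2k} A^D,
\end{equation*}
which is the desired formula.

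The only place that requires a moment of thought is the dual construction in step two, because the proof of Proposition \ref{prop3.18} is not spelled out in the text. Given $A = A F^{-1} A^* X$, one must verify that $Y := F^{-1} X^*$ lies in $A\{1,4^F\}$. The key observation is that taking adjoints in $A = A F^{-1} A^* X$ and using $F^* = F$ yields $A^* = X^* A F^{-1} A^*$, from which left-multiplication by $X^*$ and one substitution give $X^* A = A^* X$; thus $FYA = X^* A$ is Hermitian, and $AYA = A F^{-1} X^* A = A F^{-1} A^* X = A$. Once this one-line justification is on record, the rest of the argument is a mechanical dualisation of the preceding $E$-weighted theorem, so I expect no further obstacle.
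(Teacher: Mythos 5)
Your proof is correct and follows essentially the route the paper intends: the theorem is stated as the dual of the preceding $E$-weighted result, and you carry out exactly that dualization, extracting the $\{1,4^F\}$-inverse $F^{-1}Z^*A^{k}$ of $A^k$ from the hypothesis and feeding it into the representation $A^{F,\ep}=YA^kA^D$. Your explicit check that $F^{-1}X^*\in A\{1,4^F\}$ whenever $A=AF^{-1}A^*X$ (via $X^*A=A^*X$) correctly fills in the one step the paper leaves unproved.
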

 
We, next discuss the existence of the power of an $E$-weighted core-EP inverse.

 \begin{theorem}
  Let $A,~E\in\mathbb{C}^{n\times n}$ and $ind(A)=k (\in\mathbb{N})$.  Then $A^{\ep,E}$ exists if and only if $(A^k)^{\core,E}$ exists. Moreover, $A^{\ep,E} = A^{k-1}(A^k)^{\core,E}$ and $(A^k)^{\core,E} = (A^{\ep,E})^k$.
 \end{theorem}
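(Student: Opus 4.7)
My plan is to establish the theorem through two implications, leveraging uniqueness of both inverses once the defining axioms are verified.

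For the forward direction, assume $X := A^{\ep,E}$ exists; I propose to show that $X^k$ satisfies the three defining axioms of the weighted core inverse of $A^k$. For axiom $(6)$, namely $X^k(A^k)^2 = A^k$, I will first prove by induction on $j\geq 1$ that $X^j A^{k+j} = A^k$ (base case is $(6^k)$; the inductive step is $X^{j+1}A^{k+j+1} = X(X^j A^{k+j})A = XA^k\cdot A = A^k$), and then specialise to $j=k$. For axiom $(7)$, namely $A^k(X^k)^2 = X^k$, I will invoke Proposition \ref{lem3.2}(i) to obtain $AX = A^kX^k$, combine it with the identity $AX\cdot X^k = X^k$ (obtained by iterating $AX^2 = X$), and conclude $A^kX^{2k} = (AX)X^k = X^k$. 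Axiom $(3^E)$, $(EA^kX^k)^* = EA^kX^k$, is then immediate from $A^kX^k = AX$ together with the $(3^E)$ axiom satisfied by $X$.

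For the backward direction, assume $Z := (A^k)^{\core,E}$ exists. Lemma \ref{lem2.5} immediately gives $A^kZA^k = A^k$, so together with the $(3^E)$ axiom for $Z$ we have $Z\in A^k\{1,3^E\}$. Theorem \ref{thm3.12} then yields existence of $A^{\ep,E}$ with the representation $A^{\ep,E} = A^D A^k Z$. To reduce this to $A^{k-1}Z$, I plan to use the range identity $R(Z) = R(A^k)$, which is immediate from $Z(A^k)^2 = A^k$ and $A^kZ^2 = Z$, and hence $R(A^{k-1}Z)\subseteq R(A^k)$. The Drazin axiom $A^DA^{k+1} = A^k$, rewritten via $AA^D = A^DA$, gives $AA^D\cdot A^k = A^k$, meaning $AA^D$ is the identity on $R(A^k)$. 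Combining these with $A^DA^k = AA^D\cdot A^{k-1}$ produces $A^DA^kZ = AA^D(A^{k-1}Z) = A^{k-1}Z$, establishing the required formula. The complementary identity $(A^k)^{\core,E} = (A^{\ep,E})^k$ is then a direct consequence of the forward direction and uniqueness of the weighted core inverse of $A^k$.

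The main technical obstacle I anticipate is the collapse of $A^DA^kZ$ to $A^{k-1}Z$ in the backward direction. The subtle point is that $A^DA^k \neq A^{k-1}$ in general (only $A^DA^{k+1} = A^k$ holds), so the reduction is not a plain cancellation; it genuinely requires the range property $R(Z) = R(A^k)$ so that the residual projector $AA^D$ can be absorbed into $Z$. Everything else amounts to routine axiom-checking using Proposition \ref{lem3.2}, Lemma \ref{lem2.5}, and Theorem \ref{thm3.12}.
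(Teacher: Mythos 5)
Your proposal is correct. The forward direction is in substance the same as the paper's: both verify the three weighted-core-inverse axioms for $X^k$ relative to $A^k$, the only cosmetic difference being that you obtain $X^k(A^k)^2=A^k$ by a direct induction on $X^jA^{k+j}=A^k$, while the paper routes through the identity $A^D=X^{k+1}A^k$ from Lemma \ref{lem3.4}. The backward direction is where you genuinely diverge. The paper takes the candidate $X=A^{k-1}(A^k)^{\core,E}$ and checks the axioms $(6^k)$, $(7)$, $(3^E)$ head-on, using the representation $(A^k)^{\core,E}=(A^k)^{\#}A^k(A^k)^{(1,3^E)}$ from Lemma \ref{lem2.6} to get $XA^{k+1}=A^k$ and Lemma \ref{lem2.5} to get $AX^2=X$. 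You instead observe that $(A^k)^{\core,E}\in A^k\{1,3^E\}$ (via Lemma \ref{lem2.5}) and invoke Theorem \ref{thm3.12} to obtain existence together with the formula $A^DA^kZ$, then collapse this to $A^{k-1}Z$ using $Z=A^kZ^2$ and the fact that $AA^D$ acts as the identity on $R(A^k)$. Your route buys existence for free from already-proved machinery and correctly isolates the one nontrivial point — that $A^DA^kZ=A^{k-1}Z$ is not a formal cancellation but needs $R(Z)\subseteq R(A^k)$ — whereas the paper's direct verification is self-contained but leans on Lemma \ref{lem2.6}, which carries the extra hypothesis that $(A^k)^{\dagger}_{E,I}$ (equivalently a $\{1,3^E\}$-inverse of $A^k$) is available; your argument makes that availability explicit rather than implicit. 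Both are valid; yours is arguably the cleaner logical organization.
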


 \begin{proof}
Let $X=A^{\ep,E}$ and $Y=X^k$. Then by Proposition \ref{lem3.2} $(i)$ and Lemma \ref{lem3.4} $(ii)$,
\begin{center}
    $A^k = A^DA^{k+1} = (X^{k+1}A^k)A^{k+1} =  X^k(XA^{k+1})A^k = X^k(A^k)^2=Y(A^k)^2$,
\end{center}
\begin{center}
  $A^kY^2=A^k(X^k)^2= (AX)X^k = AX^2X^{k-1} = X^k=Y$, and 
\end{center}
  \begin{center}
 $EA^kY=EA^kX^k = EAX = (EAX)^* = (EA^kX^k)^* =(EA^kY)^*$.
 \end{center}
 Thus $(A^k)^{\core,E} = Y=X^k=(A^{\ep,E})^k$.\\
 Conversely, let $Y=(A^k)^{\core,E}$ and $X = A^{k-1}Y$. Then by Lemma \ref{lem2.6}, we obtain
 \begin{center}
 $XA^{k+1}=A^{k-1}(A^k)^{\core,E}A^{k+1}= A^{k-1}(A^k)^{\#}A^k(A^k)^{(1,3^E)} A^{k+1} = A^{k-1}(A^k)^{\#}A^{k+1} = A^k$.  
 \end{center}
 Applying Lemma \ref{lem2.5}, we have  
 \begin{eqnarray*}
AX^2&=&A^k(A^k)^{\core,E}A^{k-1}(A^k)^{\core,E}=A^k(A^k)^{\core,E}A^{k-1}A^k\left((A^k)^{\core,E}\right)^2=A^{k-1}A^k\left((A^k)^{\core,E}\right)^2\\
&=&A^{k-1}(A^k)^{\core,E} =X.    
 \end{eqnarray*}
Further, $EAX = EA^k(A^k)^{\core,E}=(EA^k(A^k)^{E,\core})^*=(EAX)^*$. Hence 
\begin{center}
    $A^{\ep,E}=X=A^{k-1}Y=A^{k-1}\left(A^k\right)^{\core,E}$. \qedhere
\end{center}
\end{proof}

 Similarly, we present the following result for $F$-weighted dual core-EP inverse of a matrix.

 \begin{theorem}
  Let $A,~F\in\mathbb{C}^{n\times n}$ and $ind(A)=k(\in\mathbb{N})$. Then $A^{F,\ep}$ exists if and only if $(A^k)^{F,\core}$ exists. Moreover, $A^{F,\ep} =(A^k)^{F,\core} A^{k-1}$ and $(A^k)^{F,\core} = (A^{F,\ep})^k$.
 \end{theorem}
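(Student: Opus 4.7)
The proof follows by complete analogy with the previous theorem, exchanging the roles of left and right multiplication and using the $F$-dual versions of each auxiliary result. I will establish the two implications in turn.

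For the forward implication, assume $X = A^{F,\ep}$ exists, and set $Y = X^k$. I need to verify that $Y$ satisfies the three defining conditions $(A^k)^2 Y = A^k$, $Y^2 A^k = Y$, and $(FYA^k)^* = FYA^k$ of $(A^k)^{F,\core}$. The first identity follows from the dual Lemma~\ref{lem3.4}, namely $A^D = A^m X^{m+1}$, by writing $A^k = A^{k+1} A^D = A^{k+1}(A^k X^{k+1}) = A^k(A^{k+1} X)X^k = (A^k)^2 X^k = (A^k)^2 Y$. The second identity uses Proposition~\ref{lem3.2}(iii): since $X^2 A = X$, we have $XA = X^m A^m$ for every $m$, hence $Y^2 A^k = X^{2k} A^k = X^k(X^k A^k) = X^k(XA) = X^{k+1}A$, and the relation $X^2 A = X$ collapses $X^{k+1}A = X^{k-1}(X^2 A) = X^k = Y$. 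The Hermitian identity reduces to $FYA^k = FX^k A^k = FXA$, which is Hermitian by condition $(4^F)$.

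For the converse, assume $Y = (A^k)^{F,\core}$ exists, and set $X = YA^{k-1}$. I will verify the three conditions of Definition~\ref{fepdef}. To show $A^{k+1}X = A^k$, I invoke the $F$-dual analog of Lemma~\ref{lem2.6} to write $Y = (A^k)^{(1,4^F)} A^k (A^k)^{\#}$, so that $A^{k+1}Y = A \cdot [A^k (A^k)^{(1,4^F)} A^k](A^k)^{\#} = A^{k+1}(A^k)^{\#}$; then using that $A$ commutes with $(A^k)^{\#} = (A^D)^k$ (Lemma~\ref{lemDraz}), I move the $A^{k-1}$ through and obtain $A^{k+1}X = (A^k)^{\#} A^{2k} = A^k$ via the group-inverse identity $(A^k)^{\#}(A^k)^2 = A^k$. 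For $X^2 A = X$, I compute $X^2 A = YA^{k-1}YA^k$, apply $Y^2 A^k = Y$ to replace one copy of $Y$, and then use the dual of Lemma~\ref{lem2.5} (which gives $A^k Y A^k = A^k$ for $Y \in A^k\{8,9\}$) to collapse the middle expression to $A^{k-1} Y = X$. Finally, $(FXA)^* = (FYA^k)^* = FYA^k = FXA$ by condition $(4^F)$ applied to $Y$ in its role as $(A^k)^{F,\core}$.

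The ``Moreover'' assertions then drop out for free: the constructive formulas $Y = X^k$ from the forward direction and $X = YA^{k-1}$ from the converse give precisely $(A^k)^{F,\core} = (A^{F,\ep})^k$ and $A^{F,\ep} = (A^k)^{F,\core} A^{k-1}$.

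The main technical hurdle is the identity $A^{k+1}X = A^k$ in the converse, because it requires the matrix-level fact that $A^{k+1}(A^k)^{\#}A^{k-1} = A^k$, which is not one of the defining axioms of the Drazin inverse and must be derived from the commutativity $A A^D = A^D A$ together with $(A^k)^{\#}(A^k)^2 = A^k$. Once this manipulation is set up, the rest of the argument is a routine dualization of the $E$-weighted core-EP case.
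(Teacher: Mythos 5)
Your proposal is correct and follows essentially the same route as the paper: the paper proves the $E$-weighted version in detail and dispatches this $F$-weighted statement with ``similarly,'' and your argument is precisely that dualization --- forward direction via $A^k=A^{k+1}A^D=(A^k)^2X^k$ together with Proposition~\ref{lem3.2}(iii), converse via the dual of Lemma~\ref{lem2.6} and the dual of Lemma~\ref{lem2.5}. The only blemish is a harmless transposition at the end of the $X^2A=X$ step, where you write that the expression collapses to $A^{k-1}Y=X$ although you defined $X=YA^{k-1}$; the computation you describe (replacing the \emph{first} copy of $Y$ by $Y^2A^k$ and then using $A^kYA^k=A^k$) does yield $YA^{k-1}$, as required.
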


The power of $E$-weighted core-EP inverse and $E$-weighted core-EP inverse of power can be switched without changing the result.
\begin{theorem}\label{thm4.7}
Let $A,~E\in\mathbb{C}^{n\times n}$ and $ind(A)=k(\in\mathbb{N}$). Then for any positive integer $l$, $\left(A^{l}\right)^{\ep,E}$ exists if and only if  $A^{\ep,E}$ exists. In particular,   $(A^{l})^{\ep,E} = (A^{\ep,E})^{l}$ and $A^{\ep,E} = A^{l-1} (A^{l})^{\ep,E}.$
 \end{theorem}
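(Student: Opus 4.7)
The plan is to prove the equivalence and both formulas by explicit construction, taking $Y = (A^{\ep,E})^l$ in one direction and invoking Theorem \ref{thm3.12} in the other. Throughout I would write $k_l = ind(A^l)$ and use the standard fact that $k_l = \lceil k/l \rceil$, so in particular $lk_l \geq k$; this index-shift inequality is the key arithmetic fact that will be needed repeatedly.

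For the forward direction, I would assume $X = A^{\ep,E}$ exists, set $Y = X^l$, and verify the three conditions of Definition \ref{eepdef} for $A^l$. Iterating $AX^2 = X$ gives $A^j X^{j+i} = X^i$ for $j \geq 1,\ i \geq 1$, so $A^l Y^2 = A^l X^{2l} = X^l = Y$, establishing $(7)$. Proposition \ref{lem3.2}(i) gives $A^l X^l = AX$, so $E A^l Y = E A X$ is Hermitian, establishing $(3^E)$. Iterating $X A^{k+1} = A^k$ yields $X^i A^m = A^{m-i}$ whenever $m \geq k+i$; applying this with $i = l$ and $m = l(k_l + 1) \geq k + l$ produces $Y (A^l)^{k_l+1} = X^l A^{l(k_l+1)} = A^{l k_l} = (A^l)^{k_l}$, establishing $(6^{k_l})$. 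Uniqueness (Theorem \ref{eforeuni}) then delivers $(A^l)^{\ep,E} = Y = (A^{\ep,E})^l$.

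For the converse, I would assume $Y = (A^l)^{\ep,E}$ exists and appeal to Lemma \ref{lem3.4}(i) applied to $A^l$: for any $m \geq k_l$, $Y^m \in (A^l)^m\{1,3^E\} = A^{lm}\{1,3^E\}$. Since $l m \geq l k_l \geq k$, Theorem \ref{thm3.12} guarantees that $A^{\ep,E}$ exists. To close out the identity $A^{\ep,E} = A^{l-1}(A^l)^{\ep,E}$, the formula just proved reduces the claim to showing $X = A^{l-1} X^l$; this follows from $A^{l-1} X^{l-1} = AX$ (Proposition \ref{lem3.2}(i)) followed by $AX \cdot X = AX^2 = X$, handling $l \geq 2$, with $l = 1$ being trivial.

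The main obstacle is the index bookkeeping required for $(6^{k_l})$: one must ensure the exponent $l(k_l+1)$ is at least $k + l$ so that the collapsing identity $X^l A^m = A^{m-l}$ actually applies. This reduces to the inequality $l k_l \geq k$, which I would justify directly from the definition of the index by noting that $rank(A^{l k_l}) = rank(A^{l(k_l+1)})$ forces $l k_l \geq k$, since the rank sequence of powers of $A$ strictly decreases for exponents below $k$ and stabilizes from $k$ onward.
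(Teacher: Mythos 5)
Your argument is correct, and the forward direction coincides with the paper's: both take $Y=(A^{\ep,E})^{l}$ and check the three conditions of Definition~\ref{eepdef} for $A^{l}$, with the same index bookkeeping (your inequality $l\,ind(A^{l})\ge k$ is exactly what the paper encodes by choosing $m$ with $0\le lm-k<l$), and your explicit justification of that inequality via the rank sequence is a welcome addition, since the paper uses it silently. The converse is where you genuinely diverge. The paper verifies by a direct, fairly long computation that $X:=A^{l-1}(A^{l})^{\ep,E}$ satisfies $XA^{lm+1}=A^{lm}$, $AX^{2}=X$ and $(EAX)^{*}=EAX$, i.e.\ it re-derives the defining equations from scratch. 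You instead get existence of $A^{\ep,E}$ abstractly: Lemma~\ref{lem3.4}(i) applied to $A^{l}$ puts $Y^{m}$ in $A^{lm}\{1,3^{E}\}$ with $lm\ge k$, so Theorem~\ref{thm3.12} applies; then the representation $A^{\ep,E}=A^{l-1}(A^{l})^{\ep,E}$ drops out of the already-proved identity $(A^{l})^{\ep,E}=(A^{\ep,E})^{l}$ together with $A^{l-1}X^{l}=(A^{l-1}X^{l-1})X=AX^{2}=X$ from Proposition~\ref{lem3.2}(i). Your route buys a much shorter converse by reusing the existence machinery and uniqueness (Theorem~\ref{eforeuni}); the paper's direct verification is self-contained and incidentally exhibits the bound $ind(A)\le lm$ along the way, but is otherwise just longer. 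Both are sound.
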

 \begin{proof}
 Let  $A^{\ep,E} =   X.$ Then $  XA^{k+1} = A^k,$ $AX^2 =   X$, $(EAX)^* = EAX$. Let $m$ be a positive integer such that $0 \leq lm-k <l.$ Using Proposition \ref{lem3.2}, we have
 \begin{equation*}
 \aligned
   &(A^{l})^m= A^{lm} = A^{k}A^{lm-k} =   X A^{k+1} A^{lm-k}=  X A^{lm+1}=  X (A^{l})^m A \\
   &\qquad \ \ =   X^2 (A^{l})^mA^2=\cdots=  X^l (A^{l})^m A^l=  X^l (A^{l})^{m+1};\\
  &A^{l} (  X^{l})^2 = (A^{l}   X^{l})   X^{l} =AXX^{l} = ( A X^2)X^{l-1} =  X X^{l-1} =   X^{l};\\
   &(E A^{l}   X^{l})^* = (EAX)^* = EAX = EA^{l}X^{l}.
   \endaligned
  \end{equation*}
Hence $( A^{l})^{\ep,E} =   X^{l} = ( A^{\ep,E})^{l}$ with $ind( A^l)\leq m.$

Conversely, let $ind( A^l)= m$ and    $Y=(A^{l})^{\ep,E}.$ It is enough to show that $ X :=  A^{l-1} Y$ is the $E$-weighted core-EP inverse of $ A.$ Using Proposition \ref{lem3.2} $(i)$ and Lemma \ref{lem3.4} $(ii)$, we have 
   \begin{equation*}
   \aligned
     X  A^{lm+1}&=  A^{l-1} Y  A^{lm+1} =  A^{l-1}  A^l Y^2  A^{lm+1}=  A^{l-1}  A^{2l} YY^2  A^{lm+1}\\
  &=\cdots= A^{l-1} ( A^{ml} Y Y^m)  A^{lm+1}
    =  A^{lm+l-1} Y^{m+1}  A^{lm}  A =  A^{lm+l-1} Y^{m+1} ( A^{l})^{m}  A\\
    &= A^{lm+l-1} ( A^{l})^D A= ( A^{l})^D  A^{lm+l}
    =( A^{l})^D ( A^{l})^2  A^{lm-l}\\
    &= A^{l}  A^{lm-l} =  A^{lm},
    \endaligned
   \end{equation*}
     \begin{equation*}
   \aligned
   A X^2 &=  A A^{l-1} Y  A^{l-1} Y =  A^{l} Y  A^{l-1} Y=  A^{l} Y  A^{l-1} ( A^l Y^2)=A^{l} Y  A^{l-1} \left(( A^{l})^{m+1} Y^{m+2}\right)\\
   &=  A^{l} \left(Y ( A^{l})^{m+1}\right)  A^{l-1} Y^{m+2}= A^{l}  A^{lm}  A^{l-1} Y^{m+2}= A^{l-1} \left(( A^{l})^{m+1} Y^{m+2}\right)\\
   &= A^{l-1} Y =   X,
  \endaligned
 \end{equation*}
and 
 $$\left(EAX\right)^*= \left(EA A^{l-1}Y\right)^* = \left( EA^{l} Y\right)^*=EA^{l}Y = EAX.$$
 Thus $X$ is the $E$-weighted core-EP inverse of $A$ with $ind( A)\leq lm$. Hence $ A^{\ep,E}=  X= A^{l-1} Y= A^{l-1} \left( A^l\right)^{\ep,E}$.
 \end{proof}

We present the following result for $F$-weighted dual core-EP inverse.
 
 \begin{theorem}\label{thm4.8}
Let $A,~F\in\mathbb{C}^{n\times n}$ and $ind(A)=k(\in\mathbb{N})$. Then for any positive integer $l$, Then  $\left(A^{l}\right)^{F,\ep}$ exists if and only if $A^{F,\ep}$ exists. In particular,   $(A^{l})^{F,\ep} = (A^{F,\ep})^{l}$ and $A^{F,\ep} =  (A^{l})^{F,\ep}A^{l-1}.$
 \end{theorem}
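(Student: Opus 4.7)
The plan is to mirror the proof of Theorem \ref{thm4.7} exactly, with all left/right multiplications by $A$ swapped and the $F$-weighted dual core-EP defining relations $A^{k+1}X=A^{k}$, $X^{2}A=X$, $(FXA)^{*}=FXA$ replacing the corresponding $E$-weighted ones. The two key ingredients to invoke are Proposition \ref{lem3.2}(iii), which gives $XA=X^{m}A^{m}$ for any $X\in A\{9\}$, and the $F$-weighted dual analogue of Lemma \ref{lem3.4}, which provides $A^{D}=A^{m}X^{m+1}$ whenever $X=A^{F,\ep}$. A supporting elementary fact I will also use is $Y^{r+1}(A^{l})^{r}=Y$ for all $r\geq 0$, obtained by unwinding $Y^{2}A^{l}=Y$.

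For the forward direction, assume $X=A^{F,\ep}$ and pick $m\in\mathbb{N}$ with $0\leq lm-k<l$, so that $(A^{l})^{m}=A^{lm}$. I will show that $X^{l}$ satisfies the three defining conditions for $(A^{l})^{F,\ep}$. The identity $(A^{l})^{m+1}X^{l}=(A^{l})^{m}$ is obtained by writing $A^{lm}=A^{lm-k}A^{k+1}X=A^{lm+1}X$ and then iterating $s$ times to reach $A^{lm}=A^{lm+s}X^{s}$, with $s=l$ giving the result. For $(X^{l})^{2}A^{l}=X^{l}$, Proposition \ref{lem3.2}(iii) collapses $X^{l}A^{l}$ to $XA$, reducing the computation to $X^{l}\cdot XA=X^{l+1}A=X^{l-1}(X^{2}A)=X^{l}$. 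The Hermitian condition $(FX^{l}A^{l})^{*}=FX^{l}A^{l}$ follows by the same reduction $X^{l}A^{l}=XA$ together with the hypothesis.

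For the converse, assume $Y=(A^{l})^{F,\ep}$ exists with $m:=ind(A^{l})$ and set $X:=YA^{l-1}$. I will verify the three $F$-weighted dual core-EP conditions for $X$ with respect to $A$. For $A^{lm+1}X=A^{lm}$, I will first substitute $Y=Y^{m+1}A^{ml}$ (from iterating $Y^{2}A^{l}=Y$) to rewrite the left side as $A\cdot A^{lm}Y^{m+1}\cdot A^{ml+l-1}$; the dual Lemma \ref{lem3.4} identifies $A^{lm}Y^{m+1}$ as $(A^{l})^{D}$, which commutes with $A$, so the expression becomes $(A^{l})^{D}(A^{l})^{m+1}=(A^{l})^{m}=A^{lm}$. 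For $X^{2}A=X$, writing $X^{2}A=YA^{l-1}YA^{l}$, I will substitute the leftmost $Y$ by $Y^{2}A^{l}$, expand $Y^{2}A^{l}=Y^{m+2}(A^{l})^{m+1}$ via Proposition \ref{lem3.2}(iii), commute $A^{l-1}$ past the polynomial $(A^{l})^{m+1}$, apply $(A^{l})^{m+1}Y=(A^{l})^{m}$, and finally collapse $Y^{m+2}(A^{l})^{m+1}=Y$ to arrive at $YA^{l-1}=X$. The Hermitian condition $(FXA)^{*}=FXA$ reduces at once to $(FYA^{l})^{*}=FYA^{l}$ since $XA=YA^{l}$.

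The main obstacle will be the bookkeeping in the $X^{2}A=X$ step of the converse: one must push $A^{l-1}$ through high powers of $A^{l}$ and $Y^{m+2}$ in the right order and invoke commutation of polynomials in $A$ at just the right moment. Since the entire argument is a symmetric mirror of the Theorem \ref{thm4.7} computation, no new conceptual ingredient is required beyond careful attention to side-of-multiplication.
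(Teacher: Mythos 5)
Your proposal is correct and is exactly the argument the paper intends: Theorem \ref{thm4.8} is stated without proof as the dual of Theorem \ref{thm4.7}, and your left/right-swapped computation, using Proposition \ref{lem3.2}(iii) and the dual form $A^D=A^mX^{m+1}$ of Lemma \ref{lem3.4}, is the faithful mirror of the paper's proof of that theorem. The individual steps (the iteration $A^{lm}=A^{lm+s}X^s$, the collapse $Y^{m+2}(A^l)^{m+1}=Y$, and the commutation of $A^{l-1}$ with powers of $A^l$) all check out.
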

 
It is well known that $(A^{-1})^{-1}= A$. However, $E$-weighted core-EP inverse is not following this property in general, i.e., $(A^{\ep,E})^{\ep,E} \neq A$. The following example confirm it. 
\begin{example}\rm
Let $A=
    \begin{pmatrix}
    -1 & 4 & -5\\
    1 & -4 & 5\\
    1 & -2 & 3\\
    \end{pmatrix}$ and $E=
    \begin{pmatrix}
    34/25 & 0 & 3/5\\
    0 & 1 & 0\\ 
    3/5 & 0 & 2\\
    \end{pmatrix}$. It is easy to verify $ind(A)=2$, $A^D=\begin{pmatrix}
    0 & 5/4 & -5/4\\
    0 & -5/4 & 5/4\\
    0 & -3/4 & 3/4\\
    \end{pmatrix}$, and 
    $X=\begin{pmatrix}
    0  &  0 &  0\\       
      -5/236  & 5/236   & 3/236\\   
       5/236    & -5/236  & -3/236 \\
       \end{pmatrix}$ is an $\{1,3^E\}$-inverse of $A^2$. Hence by Theorem \ref{thm3.12}, we obtain
       \begin{center}
          $A^{\ep,E}=A^DA^2X=\begin{pmatrix}
       -25/118   &      25/118    &     15/118  \\
      25/118    &    -25/118   &     -15/118 \\  
      15/118    &    -15/118   &      -9/118  \\ 
       \end{pmatrix}$. 
       \end{center}
       
       Similarly, we can calculate  
       \begin{center}
          $(A^{\ep,E})^{\ep,E}  =\begin{pmatrix}
       -50/59   &       50/59  &        30/59  \\  
      50/59      &   -50/59  &       -30/59  \\  
      30/59      &   -30/59 &        -18/59\\    
       \end{pmatrix}\neq A$. 
       \end{center}
       
           \end{example}
The following theorem represents the properties of $E$-weighted core-EP inverse.
\begin{theorem}\label{epofep}
 Let $A,~E\in\mathbb{C}^{n\times n}$ and $ind(A)=k$.  {If $A^{\ep,E}$ exists, then }$( A^{\ep,E})^{\ep,E}=  A^2  A^{\ep,E}$.
   \end{theorem}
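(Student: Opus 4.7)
The plan is to set $X := A^{\ep,E}$ and $Y := A^2 X$, and to verify that $Y$ satisfies the three defining axioms of the $E$-weighted core-EP inverse of $X$ from Definition \ref{eepdef}; uniqueness via Theorem \ref{eforeuni} will then identify $Y$ with $(A^{\ep,E})^{\ep,E}$. Throughout, I can use the defining relations of $X$, namely $XA^{k+1}=A^k$, $AX^2=X$, and $(EAX)^*=EAX$, together with the consequences $XAX=X$ and $R(X)=R(A^k)$ from Proposition \ref{lem3.2}.

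The key auxiliary identity is $XA^2X = AX$. To obtain it, I would first observe that $XA$ acts as the identity on $R(X)$: for any $v=Xz$, $XAv=XAXz=Xz$. Next, since $\mathrm{ind}(A)=k$ forces $R(A^{k+1})=R(A^k)$, each column of $AX$ lies in $R(A\cdot A^k)=R(A^k)=R(X)$, so $R(AX)\subseteq R(X)$. Applying the identity action of $XA$ to the columns of $AX$ leaves them unchanged, hence $XA\cdot AX=AX$, i.e., $XA^2X=AX$.

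With this identity at hand, verification of the three axioms is nearly automatic. For $(6^j)$ with any $j\ge 1$, $YX^{j+1}=A^2X^{j+2}=X^j$ by iterating $AX^n=X^{n-1}$ (which follows from $AX^2=X$); in particular this holds at $j=\mathrm{ind}(X)$. Axiom $(7)$ gives $XY^2=(XA^2X)(A^2X)=(AX)(A^2X)=A(XA^2X)=A\cdot AX=A^2X=Y$. Finally, $(3^E)$ reduces under the lemma to $EXY=E\cdot XA^2X=EAX$, whose conjugate transpose equals $EAX$ by the hypothesis on $X$.

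The only genuine obstacle is axiom $(3^E)$, and the identity $XA^2X=AX$ is exactly what collapses it to the already-known hermiticity of $EAX$; the other two axioms are essentially bookkeeping on $AX^2=X$. Once all three axioms hold, the uniqueness part of Theorem \ref{eforeuni} forces $(A^{\ep,E})^{\ep,E}=Y=A^2A^{\ep,E}$.
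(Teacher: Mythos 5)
Your proposal is correct and follows essentially the same route as the paper: both verify the three defining equations $(6)$, $(7)$ and $(3^E)$ for the candidate $A^2A^{\ep,E}$ as the $E$-weighted core-EP inverse of $A^{\ep,E}$, then conclude by uniqueness. Your isolated identity $XA^2X=AX$ (obtainable even more directly as $XA^2X=XA\cdot A^kX^k=(XA^{k+1})X^k=A^kX^k=AX$) is just a cleaner packaging of the telescoping computations the paper carries out at length.
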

 \begin{proof}
 If $k=0$, then the result is trivial. For $k=1$, the result follows from  Theorem \ref{thm2.4}, i.e.,
 \begin{equation*}
 {( A^{\core,E})}^{\core,E}=  A^2   A^\dagger=    A^2   A^{\core,E}.
 \end{equation*}
 Let $k \geq 2$ and $X=  A^2  A^{\ep,E}$. Now $$X\left( A^{\ep,E}\right)^{k+1}= A^2  \left( A^{\ep,E}\right)^{k+2}= A  A   \left( A^{\ep,E}\right)^2   \left( A^{\ep,E}\right)^{k}= A   \left( A^{\ep,E}\right)^2   \left( A^{\ep,E}\right)^{k-1}=  \left( A^{\ep,E}\right)^{k},$$
 \begin{eqnarray*}
    A^{\ep,E} X^2&=&   A^{\ep,E} ( A^2A^{\ep,E})^2 =   A^{\ep,E} A^2A^{\ep,E}A^2 A^{\ep,E}=A^{\ep,E}A^2   A^{\ep,E}A^2A\left( A^{\ep,E}\right)^2 \\
 &=&   A^{\ep,E}A^2    A^{\ep,E}  A^3   \left( A^{\ep,E}\right)^2
 =\cdots= A^{\ep,E} A^2A^{\ep,E}A^{k+1}   \left( A^{\ep,E}\right)^{k}\\
 &=&  A^{\ep,E} A^2 A^{k}   \left( A^{\ep,E}\right)^{k} =   A^{\ep,E} A^2A   A^{\ep,E}=  A^{\ep,E}  A^3   A^{\ep,E}=A^{\ep,E}A^2A  A^{\ep,E}\\
&=&A^{\ep,E} A^{k+1}   \left( A^{\ep,E}\right)^{k-1}=  A^{k}   \left( A^{\ep,E}\right)^{k-1} =  A^{k}  A^{\ep,E}   \left( A^{\ep,E}\right)^{k-2}\\
 &=&  A^{k+1}   \left( A^{\ep,E}\right)^{k} =  A A^{k}  \left( A^{\ep,E}\right)^{k} =  A  A  A^{\ep,E}\\
 &=&  A^2    A^{\ep,E} = X,\mbox{ and }
  \end{eqnarray*}
  \begin{eqnarray*}
    \left( EA^{\ep,E}X\right)^*&=& ( EA^{\ep,E}A^2 A^{\ep,E})^* = \left( EA^{\ep,E} A^3   \left( A^{\ep,E}\right)^2\right)^*\\
    &=& \left( EA^{\ep,E} A^{k+1}   \left( A^{\ep,E}\right)^{k}\right)^*=\left(E A^{k}   \left( A^{\ep,E}\right)^{k}\right)^* = \left(E A A^{\ep,E}\right)^*\\
    &=&  EA  A^{\ep,E} =  EA^{\ep,E} A^{k+1}   \left( A^{\ep,E}\right)^{k}\\
   &=& EA^{\ep,E}X.
  \end{eqnarray*} 
  Hence $\left( A^{\ep,E}\right)^{\ep,E}=X= A^2 A^{\ep,E}$.
 \end{proof}
 
   \begin{corollary}
  Let $A,~E\in\mathbb{C}^{n\times n}$ and  {$ind{(A)}=k$. If $A^{\ep,E}$ exists, then }
   $\left(( A^{\ep,E})^{\ep,E}\right)^{\ep,E}=  A^{\ep,E}$.
 \end{corollary}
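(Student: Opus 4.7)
The plan is to apply Theorem \ref{epofep} twice and then reduce the claim to a short algebraic identity. First, by Theorem \ref{epofep} itself, $(A^{\ep,E})^{\ep,E}=A^2A^{\ep,E}$, and in particular $(A^{\ep,E})^{\ep,E}$ exists. Setting $B:=A^{\ep,E}$, the existence of $B^{\ep,E}$ legitimizes a second application of Theorem \ref{epofep}, now with $B$ in place of $A$, giving $(B^{\ep,E})^{\ep,E}=B^{2}B^{\ep,E}$. Unwinding, this reads
\[
\bigl((A^{\ep,E})^{\ep,E}\bigr)^{\ep,E}=(A^{\ep,E})^{2}(A^{\ep,E})^{\ep,E}=(A^{\ep,E})^{2}A^{2}A^{\ep,E},
\]
so the corollary reduces to proving the identity $X^{2}A^{2}X=X$, where $X:=A^{\ep,E}$.

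To establish $X^{2}A^{2}X=X$, I combine the defining relations $XA^{k+1}=A^{k}$, $AX^{2}=X$, $(EAX)^{*}=EAX$ with the two consequences provided by Proposition \ref{lem3.2}: namely $AX=A^{m}X^{m}$ for every $m$ (part (i)) and $XAX=X$ (part (ii), applicable because $X\in A\{6^{k},7\}$). The proof is the following chain of substitutions:
\[
X^{2}A^{2}X=X\cdot XA\cdot AX=X\cdot XA\cdot A^{k}X^{k}=X\cdot XA^{k+1}\cdot X^{k}=X\cdot A^{k}\cdot X^{k}=X\cdot AX=XAX=X,
\]
where the second equality uses Proposition \ref{lem3.2}(i) to rewrite $AX=A^{k}X^{k}$, the fourth uses the defining equation $XA^{k+1}=A^{k}$, the fifth reapplies Proposition \ref{lem3.2}(i) in the form $A^{k}X^{k}=AX$, and the final step is Proposition \ref{lem3.2}(ii).

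The only conceptually delicate point is the legitimacy of the iterated use of Theorem \ref{epofep}: the second invocation needs $(A^{\ep,E})^{\ep,E}$ to itself admit an $E$-weighted core-EP inverse, which is precisely what the first invocation supplies. Apart from that verification, the proof is the short substitution chain above, so I anticipate no substantive obstacle. If a uniform statement for all $k\ge 0$ is desired, the invertible case $k=0$ reduces to $X=A^{-1}$ and the identity $X^{2}A^{2}X=X$ is immediate, so it can be disposed of in one line before handling $k\ge 1$ by the argument above.
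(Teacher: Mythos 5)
Your proposal is correct and follows essentially the same route as the paper: both apply Theorem \ref{epofep} twice to reduce the claim to $\left(A^{\ep,E}\right)^{2}A^{2}A^{\ep,E}=A^{\ep,E}$, and both verify this identity by the same substitution chain, rewriting $AA^{\ep,E}=A^{k}\left(A^{\ep,E}\right)^{k}$ via Proposition \ref{lem3.2}(i), collapsing with $A^{\ep,E}A^{k+1}=A^{k}$, and finishing with $XAX=X$. The only cosmetic difference is that the paper cites Theorem \ref{thm3.4}(ii) for $XAX=X$ where you cite Proposition \ref{lem3.2}(ii); these are the same fact.
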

 \begin{proof}
   Let $  B=  A^{\ep,E}.$ Using Theorem \ref{thm3.4} $(ii)$ and Theorem \ref{epofep}, we obtain
\begin{equation*}
\aligned
 \left(( A^{\ep,E})^{\ep,E}\right)^{\ep,E}&=\left(  B^{\ep,E}\right)^{\ep,E}=  B^2   B^{\ep,E}=\left( A^{\ep,E}\right)^2 \left( A^{\ep,E}\right)^{\ep,E}=\left( A^{\ep,E}\right)^2 \left( A^2  A^{\ep,E}\right)\\
 &=\left( A^{\ep,E}\right)^2  A \left( A  A^{\ep,E}\right)=\left( A^{\ep,E}\right)^2  A \left( A^k ( A^{\ep,E})^k\right)\\
 &= A^{\ep,E}  A^{\ep,E}  A^{k+1}  ( A^{\ep,E})^k= A^{\ep,E}  A^{k} ( A^{\ep,E})^k= A^{\ep,E}  A  A^{\ep,E}\\
 &= A^{\ep,E}. \qedhere
 \endaligned
  \end{equation*}
 \end{proof}

 In terms of $F$-weighted dual core-EP inverse, we obtain the following result. 
 \begin{theorem}
 Let $A,~F\in\mathbb{C}^{n\times n}$ and $ind(A)=k$.  {If $A^{F,\ep}$ exists, then } $( A^{F,\ep})^{F,\ep}=  A^{F,\ep}A^2$.
   \end{theorem}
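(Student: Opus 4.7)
The statement is the dual analogue of Theorem \ref{epofep}, obtained by swapping multiplication sides and replacing $E$ by $F$. Accordingly, my plan is to mirror the strategy of that proof: set $X := A^{F,\ep} A^2$ and verify directly that $X$ satisfies the three defining identities for the $F$-weighted dual core-EP inverse of $A^{F,\ep}$, namely
\[
(A^{F,\ep})^{k+1} X = (A^{F,\ep})^k, \qquad X^2 A^{F,\ep} = X, \qquad (F X A^{F,\ep})^* = F X A^{F,\ep}.
\]
Uniqueness then gives $(A^{F,\ep})^{F,\ep} = X$.

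The ingredients available are the defining identities for $A^{F,\ep}$, i.e., $A^{k+1} A^{F,\ep} = A^k$, $(A^{F,\ep})^2 A = A^{F,\ep}$, and $(F A^{F,\ep} A)^* = F A^{F,\ep} A$, together with the dual of Proposition \ref{lem3.2}(iii), which gives $A^{F,\ep} A = (A^{F,\ep})^m A^m$ for every $m \in \mathbb{N}$. The base cases $k=0$ and $k=1$ are disposed of separately: $k=0$ is trivial, and $k=1$ follows from the dual of Theorem \ref{thm2.4}(a) which identifies $(A^{F,\core})^{F,\core}$ with $A^{F,\core} A^2$.

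For $k \ge 2$, the first identity is the easiest: starting from $(A^{F,\ep})^{k+1} X = (A^{F,\ep})^{k+2} A^2$ and applying $(A^{F,\ep})^2 A = A^{F,\ep}$ twice collapses the right-hand side to $(A^{F,\ep})^k$. For the third identity I plan to rewrite $F X A^{F,\ep} = F A^{F,\ep} A \cdot A A^{F,\ep}$ and use the dual chain $A A^{F,\ep}$-behaviour together with the hermitian symmetry $(F A^{F,\ep} A)^* = F A^{F,\ep} A$ to produce a self-adjoint expression; alternatively, one can reduce $F X A^{F,\ep}$ to $F A^{F,\ep} A$ directly by repeated substitution of $A^{F,\ep} = (A^{F,\ep})^2 A$.

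The main obstacle is the middle identity $X^2 A^{F,\ep} = X$, which is the dual of the long chain of substitutions in Theorem \ref{epofep}. Here I would expand $X^2 A^{F,\ep} = A^{F,\ep} A^2 A^{F,\ep} A^2 A^{F,\ep}$ and then, on the leftmost factor, iteratively substitute $A^{F,\ep} = (A^{F,\ep})^2 A$ to push the exponent of $A$ in the middle block up to $k+1$, obtaining a term of the form $(A^{F,\ep})^{k} A^{k+1} A^{F,\ep} A^{2} A^{F,\ep}$. Applying $A^{k+1} A^{F,\ep} = A^k$ and then the dual identity $A^{F,\ep} A = (A^{F,\ep})^k A^k$ lets me collapse the resulting product back to $A^{F,\ep} A^2 = X$. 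The bookkeeping of exponents in this step is the delicate point, but it follows the same template as the corresponding computation in Theorem \ref{epofep}.
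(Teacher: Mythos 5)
Your proposal is correct and is exactly what the paper intends: the paper states this theorem without proof as the dual of Theorem \ref{epofep}, and your argument is a faithful left--right dualization of that proof (same candidate $X=A^{F,\ep}A^2$, same three verifications, same base cases via Theorem \ref{thm2.4} for $k\le 1$, same iterative substitution of $A^{F,\ep}=(A^{F,\ep})^2A$ combined with $A^{k+1}A^{F,\ep}=A^k$ and $A^{F,\ep}A=(A^{F,\ep})^mA^m$). No substantive differences to report.
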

 Similarly, for weighted dual core-EP inverse we have the following result.
 \begin{corollary}
  Let $A,~F\in\mathbb{C}^{n\times n}$ and $ind(A)=k$.  {If $A^{F,\ep}$ exists, then } $\left(( A^{F,\ep})^{F,\ep}\right)^{F,\ep}=  A^{F,\ep}$.
 \end{corollary}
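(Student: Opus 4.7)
The plan is to dualize the proof of the immediately preceding corollary. The engine is the theorem stated just above the corollary, namely $(A^{F,\ep})^{F,\ep} = A^{F,\ep} A^2$; I apply this relation twice and then collapse the resulting expression using the defining identities of $A^{F,\ep}$.

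First I set $B = A^{F,\ep}$. Applying the preceding theorem with $A$ replaced by $B$ yields $(B^{F,\ep})^{F,\ep} = B^{F,\ep} B^2$. Substituting $B^{F,\ep} = (A^{F,\ep})^{F,\ep} = A^{F,\ep} A^2$ (by the theorem applied to $A$) together with $B^2 = (A^{F,\ep})^2$, the claim reduces to the single algebraic identity $A^{F,\ep} A^2 (A^{F,\ep})^2 = A^{F,\ep}$.

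To verify this, write $X = A^{F,\ep}$, so by Definition \ref{fepdef} we have $A^{k+1} X = A^k$, $X^2 A = X$, and $(FXA)^* = FXA$. Proposition \ref{lem3.2}(iii) then gives $XA = X^m A^m$ for every positive integer $m$. Using these I compute
$$XA^2X^2 = (XA) A X^2 = X^k A^{k+1} X^2 = X^k (A^{k+1} X) X = X^k A^k X = (XA) X = XAX,$$
where the substitutions are $XA = X^k A^k$ and $A^{k+1} X = A^k$. It remains to check $XAX = X$, which follows by the same telescoping: $XAX = X^{k+1} A^{k+1} X = X^{k+1} A^k = X \cdot X^k A^k = X \cdot XA = X^2 A = X$. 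Combining, $XA^2X^2 = X$, which is exactly the reduced claim.

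The main obstacle is purely bookkeeping of the exponents $k$ and $k+1$; nothing deeper is needed beyond the identities $XA = X^m A^m$ and $A^{k+1}X = A^k$. A minor technicality is invoking the preceding theorem at $B = A^{F,\ep}$, which requires $B^{F,\ep}$ to exist; but the same theorem exhibits $B^{F,\ep} = A^{F,\ep} A^2$, so existence is automatic rather than an obstacle.
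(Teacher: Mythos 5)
Your proposal is correct and is essentially the paper's own argument: the paper proves the $E$-weighted version of this corollary by setting $B=A^{\ep,E}$, invoking $(B^{\ep,E})^{\ep,E}=B^2B^{\ep,E}$ together with $(A^{\ep,E})^{\ep,E}=A^2A^{\ep,E}$, and telescoping with the power identities, then states the $F$-weighted corollary ``similarly'' --- which is exactly the dualization you carry out. Your exponent bookkeeping via $XA=X^mA^m$ and $A^{k+1}X=A^k$, and your observation that existence of $B^{F,\ep}$ is supplied by the preceding theorem itself, are both sound.
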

The sufficient condition for the existence of weighted Moore-Penrose inverse is presented below. 
\begin{proposition}\label{prop3.25}
 Let $A,~E,~ F\in\mathbb{C}^{n\times n}$. If there exists $X,Y\in\cnn$ such that $AXA = A = AYA,~(EAX)^* = EAX \mbox{ and } (FYA)^*=FYA$, then $A^{\dagger}_{E,F} = YAX$.
\end{proposition}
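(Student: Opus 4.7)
The plan is to verify that $Z := YAX$ satisfies the four defining conditions of the generalized weighted Moore--Penrose inverse from Definition 2.1, namely $(1)$ $AZA = A$, $(2)$ $ZAZ = Z$, $(3^E)$ $(EAZ)^* = EAZ$, and $(4^F)$ $(FZA)^* = FZA$. The whole verification hinges on the two ``absorbing'' identities $AXA = A$ and $AYA = A$ from the hypothesis, which cause most of the intermediate factors to collapse. Uniqueness of $A^\dagger_{E,F}$ (mentioned right after Definition 2.1) then yields the claimed equality $A^\dagger_{E,F} = YAX$.

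First I would handle conditions $(3^E)$ and $(4^F)$, which are the cleanest. Multiplying $Z$ by $A$ on the right absorbs $AX$ via $AYA=A$, and multiplying by $A$ on the left absorbs $YA$ via $AXA=A$. Explicitly,
\begin{equation*}
EAZ = EA(YAX) = E(AYA)X = EAX, \qquad FZA = F(YAX)A = FY(AXA) = FYA,
\end{equation*}
and the assumed Hermiticity of $EAX$ and $FYA$ gives $(3^E)$ and $(4^F)$ at once. Condition $(1)$ is equally direct: $AZA = A(YAX)A = (AYA)XA = AXA = A$.

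For $(2)$, the only step requiring any care is the grouping of the six-factor product $ZAZ = YAXAYAX$. I would use the two absorbing identities in succession: first apply $AYA=A$ to the middle block $XAYA$ to obtain $XA$, giving $ZAZ = YA(XAYA)X = YA(XA)X = Y(AXA)X$, and then apply $AXA = A$ to conclude $ZAZ = YAX = Z$.

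I do not expect any real obstacle here; the only subtlety is the choice of grouping in $(2)$, since a naive parenthesization (for instance trying to reduce via $YAY$ or $XAX$) does not simplify because no such identity is assumed for $X$ or $Y$ alone. Once all four axioms are checked, invoking uniqueness of the weighted Moore--Penrose inverse finishes the proof.
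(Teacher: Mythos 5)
Your proposal is correct and follows essentially the same route as the paper: set $Z = YAX$ and verify the four axioms by letting $AXA = A$ and $AYA = A$ absorb the neighbouring factors, so that $AZA = A$, $ZAZ = Z$, $EAZ = EAX$ and $FZA = FYA$, with uniqueness of $A^\dagger_{E,F}$ finishing the argument. The only (immaterial) difference is the order in which the two absorbing identities are applied inside $ZAZ = YAXAYAX$; the paper contracts $AXA$ first and then $AYA$, while you do the reverse.
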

\begin{proof}
Let $Z=YAX$. Then the result is obtained from the following verification:
\begin{center}
    $AZA= AYAXA = AXA=A$,
\end{center}
\begin{center}
    $ZAZ = YAXAYAX = YAYAX = YAX = Z$,
\end{center}
\begin{equation*}
  (EAZ)^* = EAYAX =EAZ, \mbox{~~and~~} (FZA)^* = FYAXA = FZA.  \qedhere
\end{equation*}
\end{proof}
A necessary and sufficient condition for the existence of weighted Moore-Penrose inverse is discussed, as follows. 
\begin{theorem}
Let $A,~E,~F\in\mathbb{C}^{n\times n}$. Then $A^\dagger_{E,F}$ exists if and only if $AF^{-1}A^*EAY=A=ZAF^{-1}A^*EA$ for some $Y\in\cnn$ and $Z\in\cnn$.
\end{theorem}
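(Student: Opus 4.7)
The plan is to use Propositions \ref{prop3.15}, \ref{prop3.18}, and \ref{prop3.25} as the main tools, with the $(\Leftarrow)$ direction being almost immediate and the $(\Rightarrow)$ direction requiring an explicit construction of $Y$ and $Z$ from $A^\dagger_{E,F}$.

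For $(\Leftarrow)$, I would rewrite $AF^{-1}A^*EAY=A$ as $A=AF^{-1}A^*(EAY)$; setting $X=EAY$, Proposition \ref{prop3.18} immediately yields $A\{1,4^F\}\neq\emptyset$. Symmetrically, $ZAF^{-1}A^*EA=A$ becomes $A=(ZAF^{-1})A^*EA$, so Proposition \ref{prop3.15} gives $A\{1,3^E\}\neq\emptyset$. Feeding a $\{1,3^E\}$-inverse and a $\{1,4^F\}$-inverse into Proposition \ref{prop3.25} then produces $A^\dagger_{E,F}$.

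For $(\Rightarrow)$, let $W=A^\dagger_{E,F}$, and I propose the explicit choices $Y:=WE^{-1}W^*F$ and $Z:=E^{-1}W^*FW$. The verification rests on four identities derived from the defining equations of $W$ together with $E^*=E$ and $F^*=F$: (i) $EAW=W^*A^*E$; (ii) $FWA=A^*W^*F$; (iii) $A^*W^*A^*=A^*$, from $AWA=A$; (iv) $W^*A^*W^*=W^*$, from $WAW=W$. To check $AF^{-1}A^*EAY=A$, I would expand, use (i) to replace $EAWE^{-1}$ by $W^*A^*$, collapse $A^*W^*A^*$ via (iii), replace $A^*W^*F$ by $FWA$ via (ii), cancel $F^{-1}F$, and invoke $AWA=A$. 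The check for $ZAF^{-1}A^*EA=A$ is symmetric: replace $FWAF^{-1}$ by $A^*W^*$ via (ii), collapse $W^*A^*W^*$ via (iv), replace $W^*A^*E$ by $EAW$ via (i), and finish with $AWA=A$.

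The main obstacle is guessing the correct forms of $Y$ and $Z$; once they are in hand, both verifications reduce to direct symbolic manipulation using only the four defining equations of $W$ and the Hermiticity of $E,F$. The choice of $Y$ is motivated by seeking a product that telescopes through the two Hermiticity relations $(3^E)$ and $(4^F)$, and $Z$ is obtained from $Y$ by exchanging the roles of left and right multiplication (and of $E$ and $F$).
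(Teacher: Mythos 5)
Your proposal is correct and follows essentially the same route as the paper: the converse direction is identical (Propositions \ref{prop3.15}, \ref{prop3.18}, \ref{prop3.25}), and in the forward direction your $Y=WE^{-1}W^*F$ coincides with the paper's choice, while your $Z=E^{-1}W^*FW$ is a cleaner, more symmetric alternative to the paper's $Z=AF^{-1}(Y^*)^2A^*E$ that verifies directly via the four identities you list.
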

\begin{proof} 
 Let $X=A^\dagger_{E,F}$. Then 
\begin{eqnarray*}
A&=&AXA = AF^{-1}(FXA)^*=AF^{-1}A^*X^*F
 = AF^{-1}A^*X^*A^*X^*F\\
 &=&AF^{-1}A^*(E^{-1}EAX)^*X^*F
 = AF^{-1}A^*EAXE^{-1}X^*F = AF^{-1}A^*EAY,
 \end{eqnarray*}
 $Y =XE^{-1}X^*F$. Further, $A^* = Y^*A^*EAF^{-1}A^*=(EAY)^*AF^{-1}A^*$. This yields
\begin{center}
 $((EAY)^*A)^* =A^*EAY= (EAY)^*AF^{-1}A^*EAY=(EAY)^*A$.
\end{center}
Now 
\begin{eqnarray*}
A&=&AF^{-1}A^*EAY= AF^{-1}(EAY)^*A = AF^{-1}Y^*(A^*)EA= AF^{-1}(Y^*)^2A^*EAF^{-1}A^*EA\\
&=& ZAF^{-1}A^*EA, \mbox{ where } Z = AF^{-1}(Y^*)^2A^*E.
\end{eqnarray*}
Conversely, let $A =ZAF^{-1}A^*EA$, for some $Z\in\cnn$. Then by Proposition \ref{prop3.15},  $F^{-1}(ZA)^*E=F^{-1}(EZA)^*$  is a $\{1,3^E\}$ inverse of $A$. Similarly, from $A = AF^{-1}A^*EAY$, we obtain $F^{-1}(EAY)^*$ is a $\{1,4^F\}$ inverse of $A$. Hence by Proposition \ref{prop3.25},
\begin{equation*}
  A^{\dagger}_{E,F} = F^{-1}(EAY)^*AF^{-1}(EZA)^*. \qedhere   
\end{equation*}
\end{proof}

Let us recall the additive properties of the Drazin inverse \cite{BehNaJ, ben} of matrices.
 \begin{theorem}\label{ad-draz}
    Let $A,B\in\cnn$. If $ AB=O=  BA,$ then 
    \begin{center}
         $\left( A+  B\right)^D= A^D+  B^D$ and $\left( A-  B\right)^D= A^D-B^D$.
    \end{center}
   
    \end{theorem}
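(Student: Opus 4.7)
My strategy is to verify directly that $X := A^D + B^D$ satisfies the three defining relations $(1^k)$, $(2)$, $(5)$ for the Drazin inverse of $A+B$. The essential preliminary step is an annihilation lemma: under $AB = BA = O$, we have
\[
A^D B \;=\; BA^D \;=\; AB^D \;=\; B^D A \;=\; O,
\]
and consequently $A^D B^D = B^D A^D = O$. This is proved using the identities $A^D = A(A^D)^2 = (A^D)^2 A$ (and similarly for $B$), which themselves follow from $A A^D = A^D A$ together with $A^D A A^D = A^D$. Indeed, $A^D B = (A^D)^2 (AB) = O$, and the other three equalities are analogous; then $A^D B^D = (A^D)^2 A B^D = O$.

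Given the annihilation lemma, the three Drazin conditions for $X$ with respect to $A+B$ fall out by direct expansion. Commutativity is
\[
(A+B)X \;=\; AA^D + AB^D + BA^D + BB^D \;=\; AA^D + BB^D \;=\; X(A+B),
\]
the identity $X(A+B)X = X$ reduces to $A^D + B^D$ once all mixed terms are discarded, and the power relation uses the crucial binomial observation that $(A+B)^n = A^n + B^n$ for every $n \geq 1$, since all cross terms $A^i B^j$ with $i,j \geq 1$ vanish. Picking $n \geq \max\{\operatorname{ind}(A),\operatorname{ind}(B)\}$ gives
\[
X(A+B)^{n+1} \;=\; A^D A^{n+1} + B^D B^{n+1} \;=\; A^n + B^n \;=\; (A+B)^n,
\]
so $(A+B)^D = X = A^D + B^D$, and in passing $\operatorname{ind}(A+B) \leq \max\{\operatorname{ind}(A),\operatorname{ind}(B)\}$.

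For the second identity, I apply the first to the pair $(A, -B)$: since $A(-B) = -AB = O = -BA = (-B)A$, and since $(-B)^D = -B^D$ (verified directly from the three defining equations, noting the $(-1)^{k+1}$ that cancels when checking $(1^k)$), the first part yields $(A-B)^D = A^D + (-B)^D = A^D - B^D$.

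The only nontrivial moment is the annihilation lemma; the rest is mechanical algebra. The subtlety there is that $A^D$ is not literally a scalar multiple of $A$, so $AB = O$ does not immediately entail $A^D B = O$. One must exploit that $A^D$ lies in the ``column structure'' of $A$, captured precisely by the identity $A^D = (A^D)^2 A$, which reduces everything to the hypothesis $AB = O$.
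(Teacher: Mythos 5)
Your proof is correct. Note, however, that the paper offers no proof of this statement at all: Theorem \ref{ad-draz} is recalled as a known result with citations to the literature, so there is no in-paper argument to compare against. Your verification is a clean, self-contained alternative to simply invoking the references. The key step, the annihilation lemma $A^DB=BA^D=AB^D=B^DA=O$, is handled correctly via $A^D=(A^D)^2A=A(A^D)^2$, which is more elementary than the usual textbook route (which typically invokes the fact that $A^D$ is a polynomial in $A$ without constant term, or passes through the core--nilpotent decomposition). The remaining checks are right: the cross terms in $(A+B)X$, $X(A+B)X$, and $(A+B)^n=A^n+B^n$ all vanish, and the reduction of the second identity to the first via $(-B)^D=-B^D$ is valid. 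One small point worth making explicit if you write this up: you verify $X(A+B)^{n+1}=(A+B)^n$ for $n=\max\{ind(A),ind(B)\}$ rather than at $n=ind(A+B)$; this suffices because the Drazin inverse is equivalently characterized by the three equations holding at \emph{some} power $n$, the minimal such $n$ being the index --- which is also what justifies your parenthetical bound $ind(A+B)\leq\max\{ind(A),ind(B)\}$.
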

Using the above fact, we discuss a result on the additive property of the $E$-weighted core-EP inverse.
 \begin{theorem}
  Let $A,~B,~E\in\cnn$  with $A^*EB=O$, and  $AB=O=BA$.  {If $A^{\ep,E}$ and $B^{\ep,E}$ exists,} then 
      $\left( A+  B\right)^{\ep,E}= A^{\ep,E}+  B^{\ep,E}$ and  $\left( A- B\right)^{\ep,E}= A^{\ep,E}- B^{\ep,E}$.
   \end{theorem}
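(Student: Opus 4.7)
The plan is to set $k=\mathrm{ind}(A)$, $l=\mathrm{ind}(B)$, $p=\max\{k,l\}$, and $X=A^{\ep,E}+B^{\ep,E}$, and verify directly that $X$ satisfies the three defining relations of Definition \ref{eepdef} for $A+B$ with the index replaced by $p$. Since $AB=BA=O$, we have $(A+B)^{n}=A^{n}+B^{n}$ for every $n\geq 1$ (all mixed terms vanish), which in particular makes $p$ an admissible exponent for $A+B$; uniqueness is then furnished by Theorem \ref{eforeuni}.

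The crux of the argument is to establish that all six cross products vanish:
\begin{equation*}
BA^{\ep,E}=O,\ \ AB^{\ep,E}=O,\ \ A^{\ep,E}B=O,\ \ B^{\ep,E}A=O,\ \ A^{\ep,E}B^{\ep,E}=O,\ \ B^{\ep,E}A^{\ep,E}=O.
\end{equation*}
For the right-sided ones I would invoke the range identity $R(A^{\ep,E})=R(A^{k})$ from Theorem \ref{thm3.4}, which produces $A^{\ep,E}=A^{k}U$ for some $U\in\cnn$; since $BA=O$ forces $BA^{k}=O$, one gets $BA^{\ep,E}=O$, and symmetrically $AB^{\ep,E}=O$. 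For the left-sided ones I would invoke the companion identity $R((A^{\ep,E})^{*})=R(EA^{k})$, yielding $A^{\ep,E}=W^{*}(A^{*})^{k}E$ for some $W$; then
\begin{equation*}
A^{\ep,E}B=W^{*}(A^{*})^{k-1}(A^{*}EB)=O
\end{equation*}
by the hypothesis $A^{*}EB=O$. Taking adjoints of $A^{*}EB=O$ and using $E^{*}=E$ gives $B^{*}EA=O$, so the same argument (applied to $B^{\ep,E}$) produces $B^{\ep,E}A=O$. Combining the two types of representation yields $A^{\ep,E}B^{\ep,E}=W^{*}(A^{*})^{k}EB^{l}V=O$, and symmetrically $B^{\ep,E}A^{\ep,E}=O$.

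Once the cross products are gone, the verification becomes block-diagonal bookkeeping. For $(6^{p})$,
\begin{equation*}
X(A+B)^{p+1}=A^{\ep,E}A^{p+1}+B^{\ep,E}B^{p+1}=A^{p}+B^{p}=(A+B)^{p},
\end{equation*}
the mixed terms $A^{\ep,E}B^{p+1}$ and $B^{\ep,E}A^{p+1}$ being killed by the left-sided identities. For $(7)$, $(A+B)X^{2}=A(A^{\ep,E})^{2}+B(B^{\ep,E})^{2}=A^{\ep,E}+B^{\ep,E}=X$, using $A(B^{\ep,E})^{2}=(AB^{\ep,E})B^{\ep,E}=O$ and its partner. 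For $(3^{E})$, $E(A+B)X=EAA^{\ep,E}+EBB^{\ep,E}$ is Hermitian since each summand is, by the $(3^{E})$ axiom applied to $A^{\ep,E}$ and $B^{\ep,E}$ individually. Hence $X=(A+B)^{\ep,E}$.

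The second identity $(A-B)^{\ep,E}=A^{\ep,E}-B^{\ep,E}$ is obtained from the first by substituting $-B$ for $B$: the hypotheses $A^{*}E(-B)=O$ and $A(-B)=(-B)A=O$ are preserved, and a direct check of Definition \ref{eepdef} shows $(-B)^{\ep,E}=-B^{\ep,E}$. The only real obstacle is the left-sided vanishing $A^{\ep,E}B=O$ (and its partner $B^{\ep,E}A=O$); this is genuinely more than $AB=O$ provides and is precisely where the weighted orthogonality $A^{*}EB=O$ enters, routed through the adjoint range identity of Theorem \ref{thm3.4}.
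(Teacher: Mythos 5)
Your proof is correct, but it takes a genuinely different route from the paper's. You verify the three defining equations of Definition \ref{eepdef} for $A+B$ directly (with exponent $p=\max\{\mathrm{ind}(A),\mathrm{ind}(B)\}$), whereas the paper first shows $(A^{\ep,E})^k+(B^{\ep,E})^k$ is a $\{1,3^E\}$-inverse of $(A+B)^k$ and then assembles the answer through Theorem \ref{thm3.12} together with the additive property of the Drazin inverse (Theorem \ref{ad-draz}), i.e.\ $(A+B)^{\ep,E}=(A+B)^D(A+B)^k\bigl((A^{\ep,E})^k+(B^{\ep,E})^k\bigr)$. The cross-product vanishing is also reached by different mechanisms: you route the left-sided identities $A^{\ep,E}B=O$ and $B^{\ep,E}A=O$ through the adjoint range condition $R((A^{\ep,E})^*)=R(EA^k)$ of Theorem \ref{thm3.4}, while the paper extracts them algebraically from $A^{\ep,E}=A^{\ep,E}AA^{\ep,E}$ and the Hermitian axiom, via $A^{\ep,E}B=A^{\ep,E}E^{-1}(EAA^{\ep,E})^*B=A^{\ep,E}E^{-1}(A^{\ep,E})^*A^*EB=O$; these are essentially equivalent in content. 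Your approach is more self-contained and elementary (it needs only Theorem \ref{thm3.4} for the ranges and Theorem \ref{eforeuni} for uniqueness), at the cost of one small point worth a sentence: you verify $(6^p)$ for $p\geq \mathrm{ind}(A+B)$ rather than at the index itself, so you should note that $XC^{p+1}=C^p$ with $p\geq \mathrm{ind}(C)$ implies $XC^{q+1}=C^q$ at $q=\mathrm{ind}(C)$ by right-multiplying with $(C^D)^{p-q}$ — a reduction the paper itself uses implicitly in Theorems \ref{thm3.12} and \ref{thm4.7}. The paper's approach, by contrast, buys the result almost for free once the machinery of Section 3 is in place, and makes transparent why the Drazin additivity hypothesis $AB=O=BA$ is the natural one.
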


 \begin{proof}
    Let $A^*EB=O$, and $ AB=O=BA$. Then we have 
   $$
 \aligned
 & B^*EA=(A^*EB)^*=O,~AB^{\ep,E}= AB \left(  B^{\ep,E}\right)^2=O,~B  A^{\ep,E}=  B  A \left( A^{\ep,E}\right)^2=O,\\
 &   B^{\ep,E}A=  B^{\ep,E}   B   B^{\ep,E}  A=  B^{\ep,E}E^{-1}(E   B   B^{\ep,E}) A= B^{\ep,E}E^{-1} \left( B^{\ep,E}\right)^*   B^*E  A=O,\\ 
 & A^{\ep,E}B= A^{\ep,E}  A  A^{\ep,E} B= A^{\ep,E}E^{-1} \left( A^{\ep,E}\right)^*  A^* E  B=O,\\ 
 & A^{\ep,E}   B^{\ep,E}= A^{\ep,E} B \left( B^{\ep,E}\right)^2=O,~ B^{\ep,E}  A^{\ep,E}=  B^{\ep,E} A\left(  A^{\ep,E}\right)^2=O.
\endaligned
$$
Let $k=\max\{ind(A), ind(B)\}.$ Using Lemma \ref{lem3.4} $(i)$, we obtain 
\begin{center}
  $ A^k \left( A^{\ep,E}\right)^k  A^k= A^k$ and $  B^k \left(  B^{\ep,E}\right)^k   B^k=  B^k.$      
\end{center}
Now
 \begin{equation*}
    \aligned
    ( A&+  B)^k \left(( A^{\ep,E})^k+(  B^{\ep,E})^k\right) ( A+  B)^k=( A^k+  B^k) \left(( A^{\ep,E})^k+(  B^{\ep,E})^k\right) ( A^k+  B^k)\\
    &=\left( A^k ( A^{\ep,E})^k+  B^k (  B^{\ep,E})^k\right) ( A^k+  B^k)=\left( A  A^{\ep,E}+  B   B^{\ep,E}\right) ( A^k+  B^k)\\
    &= A  A^{\ep,E}  A^k+  B   B^{\ep,E}   B^k= A^k ( A^{\ep,E})^k  A^k+  B^k (  B^{\ep,E})^k   B^k= A^k+  B^k,
    \endaligned
    \end{equation*}
and
    \begin{equation*}
    \aligned
    \left(E( A+  B)^k \left(( A^{\ep,E})^k+(  B^{\ep,E})^k\right)\right)^*&=\left(E A  A^{\ep,E}+  EB   B^{\ep,E}\right)^*\\
    &=\left(E A  A^{\ep,E}+E B   B^{\ep,E}\right)\\
    &=( A+  B)^k \left(( A^{\ep,E})^k+(  B^{\ep,E})^k\right).
    \endaligned
    \end{equation*}
Therefore,  $( A^{\ep,E})^k+(  B^{\ep,E})^k\in (A+  B)^k\{1,3^E\}.$ By Theorem \ref{thm3.12}, we have
\begin{equation*}
\aligned
( A+  B)^{\ep,E}&=( A+  B)^D ( A+  B)^k \left(( A^{\ep,E})^k+(  B^{\ep,E})^k\right)\\
&=( A^D+  B^D) ( A^k+  B^k) \left(( A^{\ep,E})^k+(  B^{\ep,E})^k\right)\\
&=( A^D  A^k+  B^D   B^k) \left(( A^{\ep,E})^k+(  B^{\ep,E})^k\right)\\
&=  A^D  A^k \left( A^{\ep,E}\right)^k+  B^D   B^k \left(  B^{\ep,E}\right)^k\\
&=  A^D  A^k \left( A^{k}\right)^{\core,E}+  B^D   B^k \left(  B^{k}\right)^{\core,E}= A^{\ep,E}+  B^{\ep,E}.
\endaligned
\end{equation*}
Similarly, we can verify the other identity. 
 \end{proof}
 \begin{theorem}
  Let $A,~B,~F\in\cnn$  with $ AF^{-1}B^*=O$, and $ AB=O=BA$.  {If $A^{F,\ep}$ and $B^{F,\ep}$ exists}, then     $\left( A+  B\right)^{F,\ep}= A^{F,\ep}+  B^{F,\ep}$ and $\left( A-  B\right)^{F,\ep}= A^{F,\ep}- B^{F,\ep}.$   
  \end{theorem}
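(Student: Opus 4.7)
The strategy is to mirror the preceding $E$-weighted additive theorem, swapping left- and right-multiplication throughout, and replacing the hypothesis ``$A^*EB = O$'' with its $F$-dual form. Taking conjugate transposes of $AF^{-1}B^* = O$ and using that $F$ (hence $F^{-1}$) is Hermitian yields $BF^{-1}A^* = O$, which will play the analogous role.

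\textbf{Step 1: all relevant cross-products vanish.} Using the dual of Proposition \ref{lem3.2}(ii) (namely $XAX = X$ for $X \in A\{8^k,9\}$), together with $(FXA)^* = FXA$, the key calculation is
\begin{equation*}
BA^{F,\ep} \;=\; BA^{F,\ep}AA^{F,\ep} \;=\; BF^{-1}(FA^{F,\ep}A)^*A^{F,\ep} \;=\; BF^{-1}A^*(A^{F,\ep})^*FA^{F,\ep} \;=\; O,
\end{equation*}
and analogously $AB^{F,\ep}=O$ using $AF^{-1}B^*=O$. From $AB = BA = O$ and the $9$-axiom $X = X^2A$ one gets immediately $A^{F,\ep}B = (A^{F,\ep})^2AB = O$, $B^{F,\ep}A = O$, and then $A^{F,\ep}B^{F,\ep} = (A^{F,\ep})^2 AB^{F,\ep} = O$, $B^{F,\ep}A^{F,\ep}=O$.

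\textbf{Step 2: $(A^{F,\ep})^k + (B^{F,\ep})^k \in (A+B)^k\{1,4^F\}$.} Let $k = \max\{ind(A),ind(B)\}$; since $AB=BA=O$, $(A+B)^k = A^k + B^k$. The vanishing from Step 1 propagates to powers: $A^j(B^{F,\ep})^k = O$ and $(A^{F,\ep})^k B^j = O$ for all $j \geq 1$. Combined with the $F$-dual of Lemma \ref{lem3.4}(i), namely $A^k(A^{F,\ep})^kA^k = A^k$ and $(F(A^{F,\ep})^kA^k)^* = F(A^{F,\ep})^kA^k$ (and similarly for $B$), direct expansion shows both $(A+B)^k[(A^{F,\ep})^k+(B^{F,\ep})^k](A+B)^k = (A+B)^k$ and the Hermitian condition for $4^F$.

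\textbf{Step 3: assembly via Theorem \ref{thm3.13} and Theorem \ref{ad-draz}.} By Theorem \ref{thm3.13},
\begin{equation*}
(A+B)^{F,\ep} \;=\; \bigl[(A^{F,\ep})^k + (B^{F,\ep})^k\bigr](A+B)^k(A+B)^D.
\end{equation*}
Since $AB=BA=O$, Theorem \ref{ad-draz} gives $(A+B)^D = A^D+B^D$. The cross terms $A^kB^D$, $B^kA^D$, $(A^{F,\ep})^kB^D$, $(B^{F,\ep})^kA^D$ all vanish: for instance $A^kB^D = A^k B^m(B^D)^{m+1} = O$ by commuting $B$ past $B^D$ (legitimate since $BB^D = B^DB$) and invoking $AB^m = O$. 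What remains collapses to $(A^{F,\ep})^k A^k A^D + (B^{F,\ep})^k B^k B^D$, which equals $A^{F,\ep} + B^{F,\ep}$ by the $F$-dual of Corollary \ref{cor3.14}(iii). The ``$-$'' identity follows by replacing $B$ with $-B$ after verifying from the defining equations that $(-B)^{F,\ep} = -B^{F,\ep}$.

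\textbf{Expected obstacle.} Conceptually nothing is deep here; the only technical point is the auxiliary identity $A^kB^D = O$ (and $(A^{F,\ep})^kB^D = O$), which requires commuting $B$ past $B^D$ in the representation $B^D = B^m(B^D)^{m+1}$ to bring $AB=O$ into play. This is pure bookkeeping but must be done carefully so that the collapse in Step 3 is airtight.
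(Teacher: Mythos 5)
Your proposal is correct and follows essentially the same route the paper intends: the paper states this theorem without proof as the dual of the preceding $E$-weighted additive theorem, and your argument is precisely that dualization (cross-products vanish via $BF^{-1}A^*=(AF^{-1}B^*)^*=O$ and the axiom $X^2A=X$, then $(A^{F,\ep})^k+(B^{F,\ep})^k\in (A+B)^k\{1,4^F\}$, then assembly through Theorem \ref{thm3.13} and the additivity of the Drazin inverse). The bookkeeping you flag ($A^kB^D=O$ via $B^D=B(B^D)^2$, and the collapse $(A^{F,\ep})^kA^kA^D=A^{F,\ep}$) goes through exactly as you describe.
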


\section{Star weighted core-EP and weighted core-EP star}
 In this section, we discuss the new class of matrices to solve matrix type of equations, i.e., star weighted core-EP and weighted core-EP star matrices. 

\begin{theorem}\label{thm44.1}
Let $A,~E\in\cnn$ and $ind(A)=k$.  {If $A^{\ep,E}$ exists,} then $X =A^*AA^{\ep,E}$ is the unique solution of the following  system of matrix equations
    \begin{equation}\label{eq4.1}
     X(A^{\dagger})^*X = X,~~XA^k=A^* A^k \mbox{ and } (A^{\dagger})^*X = AA^{\ep,E}.  
    \end{equation}
\end{theorem}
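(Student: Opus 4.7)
The plan is to verify that $X := A^*AA^{\ep,E}$ satisfies each of the three equations and then to establish uniqueness directly from them. The central algebraic fact I will exploit throughout is that $AA^{\ep,E}$ is an idempotent admitting the factorisation $AA^{\ep,E}=A^k(A^{\ep,E})^k$, where $(A^{\ep,E})^k$ is a $\{1,3^E\}$-inverse of $A^k$ (by Proposition \ref{lem3.2}(i) and Lemma \ref{lem3.4}(i)); in particular $A^k(A^{\ep,E})^kA^k=A^k$, so $(AA^{\ep,E})^2 = A^k(A^{\ep,E})^k A^k(A^{\ep,E})^k = A^k(A^{\ep,E})^k = AA^{\ep,E}$.

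I would begin with the third equation, which concerns only the ordinary Moore-Penrose inverse. Using $(A^\dagger)^*A^* = (AA^\dagger)^* = AA^\dagger$ together with $AA^\dagger A = A$ one obtains $(A^\dagger)^* X = (AA^\dagger)(AA^{\ep,E}) = AA^{\ep,E}$. Substituting this into $X(A^\dagger)^*X$ gives $X\cdot AA^{\ep,E} = A^*(AA^{\ep,E})^2 = A^*AA^{\ep,E}=X$ by the idempotency just noted, so the first equation holds. For the second equation, the same factorisation yields $AA^{\ep,E}\cdot A^k = A^k(A^{\ep,E})^k A^k = A^k$, hence $XA^k = A^*A^k$.

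For uniqueness, suppose $Y$ is any solution of the system. Combining $Y(A^\dagger)^*Y = Y$ with $(A^\dagger)^*Y = AA^{\ep,E}$ yields $Y = Y\cdot AA^{\ep,E}$. Using the factorisation $AA^{\ep,E}=A^k(A^{\ep,E})^k$ and then the equation $YA^k = A^*A^k$ in succession, one obtains $Y = YA^k(A^{\ep,E})^k = A^*A^k(A^{\ep,E})^k = A^*AA^{\ep,E} = X$. The main obstacle is spotting that $AA^{\ep,E}$ not only is idempotent but splits as $A^k$ times a $\{1,3^E\}$-inverse of $A^k$ (via Corollary \ref{cor3.14}(ii) or, more directly, Proposition \ref{lem3.2}(i) combined with Lemma \ref{lem3.4}(i)); once this observation is in hand, both the verification of the first equation and the entire uniqueness argument collapse into a single line each.
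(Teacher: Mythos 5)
Your proposal is correct and follows essentially the same route as the paper: both verify the three equations via $(A^\dagger)^*A^*=AA^\dagger$ and the factorisation $AA^{\ep,E}=A^k(A^{\ep,E})^k$ with $(A^{\ep,E})^k\in A^k\{1,3^E\}$, and both obtain uniqueness by the chain $Y=Y(A^\dagger)^*Y=YAA^{\ep,E}=YA^k(A^{\ep,E})^k=A^*A^k(A^{\ep,E})^k=X$. The only cosmetic difference is that the paper compares two arbitrary solutions (and verifies the third equation after premultiplying by $E$), whereas you compare an arbitrary solution directly to the constructed $X$.
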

\begin{proof}
Let $X =A^*AA^{\ep,E}$. Then by Theorem \ref{thm3.4} and Lemma \ref{lem3.4}, we obtain
\begin{eqnarray*}
X(A^{\dagger})^*X &=& A^*AA^{\ep,E}(A^{\dagger})^*A^*AA^{\ep,E} = A^*AA^{\ep,E}AA^{\dagger}AA^{\ep,E}\\
&=& A^*AA^{\ep,E}AA^{\ep,E} = A^*AA^{\ep,E} = X,
\end{eqnarray*}
\begin{center}
    $XA^k = A^*AA^{\ep,E}A^k = A^*A^k(A^{\ep,E})^kA^k = A^*A^k$, and
\end{center}
\begin{eqnarray*}
(A^{\dagger}E)^*X &=& E(A^{\dagger})^*A^*AA^{\ep,E} = E(AA^{\dagger})^*AA^{\ep,E} =E AA^{\dagger}AA^{\ep,E} = EAA^{\ep,E}.
\end{eqnarray*}
Next we claim the uniqueness of $X$. Let $X$ and $Y$ be two solutions which satisfy  equation \eqref{eq4.1}. Then  \begin{align*}
X &=~ X(A^{\dagger})^*X = XE^{-1}(A^{\dagger}E)^*X=XAA^{\ep,E} = XA^k(A^{\ep,E})^k = A^*A^k(A^{\ep,E})^k\\
&=~ YA^k(A^{\ep,E})^k= YE^{-1}EAA^{\ep,E} =Y(A^{\dagger})^*Y = Y. \qedhere 
\end{align*}
\end{proof}
Similarly, we can prove for $F$-weighted dual core-EP inverse.
\begin{theorem}\label{thm44.2}
Let $A,~F\in\cnn$ and $ind(A)=k$.  {If $A^{F,\ep}$ exists}, then $Y =A^{F,\ep}AA^*$ is the unique solution of the following  system of matrix equations
    \begin{equation}\label{eq4.2}
     Y(A^{\dagger})^*Y = Y,~~A^kY=A^kA^*, \mbox{ and } Y(A^{\dagger})^* = A^{F,\ep}A. 
    \end{equation}
\end{theorem}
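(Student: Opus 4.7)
The plan is to mirror the proof of Theorem \ref{thm44.1}, exploiting the formal $F$-weighted duality. Two things must be shown: that $Y = A^{F,\ep}AA^*$ indeed satisfies \eqref{eq4.2}, and that any other solution coincides with it. The workhorses will be the dual analog of Proposition \ref{lem3.2}, namely $A^{F,\ep}A = (A^{F,\ep})^mA^m$ and $A^{F,\ep}AA^{F,\ep}=A^{F,\ep}$, together with the dual of Lemma \ref{lem3.4}, which guarantees $(A^{F,\ep})^k \in A^k\{1,4^F\}$, i.e.\ $A^k(A^{F,\ep})^kA^k = A^k$. I also use the standard Moore--Penrose identity $(A^\dagger A)^* = A^\dagger A$, so that $A^*(A^\dagger)^* = (A^\dagger A)^* = A^\dagger A$ and hence $AA^*(A^\dagger)^* = A(A^\dagger A) = A$.

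For existence, I would verify the three equations in turn. The third is cleanest: $Y(A^\dagger)^* = A^{F,\ep}AA^*(A^\dagger)^* = A^{F,\ep}\bigl(AA^*(A^\dagger)^*\bigr) = A^{F,\ep}A$. The first then follows immediately from this and the dual $XAX = X$ identity: $Y(A^\dagger)^*Y = (A^{F,\ep}A)Y = A^{F,\ep}A\cdot A^{F,\ep}AA^* = A^{F,\ep}AA^* = Y$. For the middle equation, rewrite $A^{F,\ep}A = (A^{F,\ep})^kA^k$ via the dual of Proposition \ref{lem3.2}(iii) and then apply $A^k(A^{F,\ep})^kA^k = A^k$ to get $A^kY = A^k A^{F,\ep}AA^* = A^k(A^{F,\ep})^kA^k A^* = A^kA^*$.

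For uniqueness, suppose $Y$ and $Z$ both satisfy \eqref{eq4.2}. Chain the three equations in the natural order: starting from $Y = Y(A^\dagger)^*Y$, apply the third equation for $Y$ to replace $Y(A^\dagger)^*$ by $A^{F,\ep}A$, giving $Y = A^{F,\ep}A\, Y$. Rewrite $A^{F,\ep}A = (A^{F,\ep})^kA^k$ and apply the second equation for $Y$ to get $Y = (A^{F,\ep})^kA^kA^*$. The same chain, read backwards for $Z$, gives $(A^{F,\ep})^kA^kA^* = (A^{F,\ep})^kA^kZ = A^{F,\ep}A\, Z = Z(A^\dagger)^*Z = Z$, so $Y = Z$.

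The only real subtlety is keeping track of the side on which each factor sits; the $F$-weighted dual setting swaps the roles of left and right throughout, so the reader must be careful that the third equation reads $Y(A^\dagger)^* = A^{F,\ep}A$ rather than $(A^\dagger)^*Y = AA^{F,\ep}$. Once that is fixed, all the manipulations are routine consequences of the already-established identities for $A^{F,\ep}$ and the Hermiticity of $A^\dagger A$.
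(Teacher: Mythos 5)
Your proposal is correct and follows essentially the same route as the paper, which proves the primal version (Theorem \ref{thm44.1}) by direct verification using $A^*(A^\dagger)^*=A^\dagger A$, the identity $A^{\ep,E}=A^{\ep,E}AA^{\ep,E}$, the power formula from Proposition \ref{lem3.2}, and the $\{1,3^E\}$ property of $(A^{\ep,E})^k$ from Lemma \ref{lem3.4}, then chains the three equations for uniqueness; your argument is exactly the left--right dualization of that, with the minor (harmless) streamlining of deducing the first equation from the third rather than verifying it independently.
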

In view of Theorem \ref{thm44.1} and \ref{thm44.2}, we define the following special matrices.
\begin{definition}
Let $A,~E\in\cnn$ and $ind(A)=k$. The star $E$-weighted core-EP  matrix of $A$ is denoted as $A^{*,\ep,E}$ and defined by $A^{*,\ep,E}  = A^*AA^{\ep,E}$.
\end{definition}
\begin{definition}
Let $A,~F\in\cnn$ and $ind(A)=k$. The star $F$-weighted dual core-EP  matrix of $A$ is denoted as $A^{F,\ep,*}$ and defined by $A^{F,\ep,*}  = A^{F,\ep}AA^*$.
\end{definition}
We can observe that, both $A^{*,\ep,E}$, $A^{F,\ep,*}$ matrices are outer inverse of $(A^\dagger)^*$. Next we discuss further properties of these special matrices. 

\begin{theorem}
Let $A,~E\in\cnn$ and $ind(A)=k$.  {If $A^{\ep,E}$ exists,} then the following statements are equivalent:
\item[(i)] $X$ is the star $E$-weighted core-EP  matrix of $A$.
\item[(ii)]$XAA^{\ep,E} = X$ and $XA^k = A^*A^k$.
\item[(iii)]$A^{\dagger}AXAA^{\ep,E} = X$ and $(A^{\dagger})^*XA^k = A^k$.
\item[(iv)]$XAA^{\ep,E} = X$ and $XA = A^*AA^{\ep,E}A$.
\item[(v)]$XAA^{\ep,E} = X$ and $X(A^{\dagger})^* = A^*AA^{\ep,E}(A^{\dagger})^*$.
\item[(vi)]$A^{\dagger}AX = X$ and $(A^{\dagger})^*X = AA^{\ep,E}$.
\item[(vii)]$A^{\dagger}AX = X$and $A^{\dagger}(A^{\dagger})^*X =A^{\dagger}AA^{\ep,E}$.
\item[(viii)]$A^{\dagger}AX = X$and $AX = AA^*AA^{\ep,E}$.
\item[({ix})]$XA^{\ep,E}A(A^{\dagger})^*X = X$,~$A^{\ep,E}A(A^{\dagger})^*X = AA^{\ep,E}$, and $XA^{\ep,E}A(A^{\dagger})^* = A^*A^{\ep,E}A(A^{\dagger})^*$.
\item[({x})]$XA^{\ep,E}A(A^{\dagger})^*X = X$,~$A^{\ep,E}A(A^{\dagger})^*X = AA^{\ep,E}$, $XA^{\ep,E}A(A^{\dagger})^* = A^*A^{\ep,E}A(A^{\dagger})^*$, and $A^{\ep,E}A(A^{\dagger})^*X A^{\ep,E}A(A^{\dagger})^* =A^{\ep,E}A(A^{\dagger})^*$.
\end{theorem}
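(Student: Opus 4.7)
The plan is to prove the ten statements are mutually equivalent by establishing, for each $j \in \{2,\ldots,10\}$, the equivalence (i)$\Leftrightarrow$(j); in most cases the converse (j)$\Rightarrow$(i) can be routed through the simplest auxiliary statement (ii). First I would assemble a toolkit of identities. By Proposition \ref{lem3.2}(i), $AA^{\ep,E}=A^k(A^{\ep,E})^k$; by Lemma \ref{lem3.4}(i), $(A^{\ep,E})^k \in A^k\{1,3^E\}$ so $A^k(A^{\ep,E})^kA^k=A^k$, whence $AA^{\ep,E}$ is idempotent and $AA^{\ep,E}A^k=A^k$. From Definition \ref{eepdef}, $A(A^{\ep,E})^2=A^{\ep,E}$ and $A^{\ep,E}A^{k+1}=A^k$, which combined with the previous yield $AA^{\ep,E}\cdot A^{\ep,E}=A^{\ep,E}$ and $A^{\ep,E}A^{2}A^{\ep,E}=AA^{\ep,E}$. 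The Moore--Penrose relations give $A^\dagger A A^*=A^*$, $A^*(A^\dagger)^*=A^\dagger A$, and $(A^\dagger)^*A^*=AA^\dagger$.

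The forward implications (i)$\Rightarrow$(ii)--(x) are short substitutions into $X=A^*AA^{\ep,E}$. For (ii), idempotency of $AA^{\ep,E}$ yields $XAA^{\ep,E}=X$ and $AA^{\ep,E}A^k=A^k$ yields $XA^k=A^*A^k$. For (iii)--(v) and (vi)--(viii) one additionally invokes $A^\dagger AA^*=A^*$ (giving $A^\dagger AX=X$) and $(A^\dagger)^*A^*=AA^\dagger$ together with $AA^\dagger AA^{\ep,E}=AA^{\ep,E}$ (giving $(A^\dagger)^*X=AA^{\ep,E}$), with each displayed equation then obtained by further left- or right-multiplication. For (ix)--(x) the collapses $AA^{\ep,E}\cdot A^{\ep,E}=A^{\ep,E}$ and $A^{\ep,E}A^2A^{\ep,E}=AA^{\ep,E}$ handle the blocks $A^{\ep,E}A(A^\dagger)^*$. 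The converse implications each follow a common template: extract a "range" equation of the form $X=XAA^{\ep,E}$ (or $A^\dagger AX=X$) from the first displayed equation and a "fixing" equation $XA^k=A^*A^k$ from the second, then conclude $X=XAA^{\ep,E}=XA^k(A^{\ep,E})^k=A^*A^k(A^{\ep,E})^k=A^*AA^{\ep,E}$. For instance in (iii), left-multiply $(A^\dagger)^*XA^k=A^k$ by $A^*$ to get $A^\dagger AXA^k=A^*A^k$, right-multiply by $(A^{\ep,E})^k$ to get $A^\dagger AX\cdot AA^{\ep,E}=A^*AA^{\ep,E}$, and combine with the first equation.

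The main obstacle will be (ix) and (x). The operator $A^{\ep,E}A(A^\dagger)^*$ is not a standard projector, so the three displayed equations must be combined in a specific order. The plan for (ix)$\Rightarrow$(i) is to substitute the second equation into the first, obtaining $X=XAA^{\ep,E}$, and then to right-multiply the third equation by $A^*A^k$ and chain the reductions $(A^\dagger)^*A^*=AA^\dagger$, $AA^\dagger A^k=A^k$, and $A^{\ep,E}A^{k+1}=A^k$ to arrive at $XA^k=A^*A^k$, whence (ii) and hence (i). The extra relation in (x) is automatic in the forward direction from $(A^\dagger)^*X=AA^{\ep,E}$ and $A^{\ep,E}A^2A^{\ep,E}=AA^{\ep,E}$, and is not needed for the converse, so (x)$\Leftrightarrow$(ix) is immediate. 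Everything else reduces to routine application of the toolkit.
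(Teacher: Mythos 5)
Your proposal is correct and follows essentially the same strategy as the paper: forward implications by direct substitution into $X=A^*AA^{\ep,E}$ using the identities $AA^{\ep,E}=A^k(A^{\ep,E})^k$, $AA^{\ep,E}A^k=A^k$ and the Moore--Penrose relations, and converse implications by collapsing each pair of equations back to $X=A^*A^k(A^{\ep,E})^k=A^*AA^{\ep,E}$ (the paper routes a few converses through short chains such as (iii)$\Rightarrow$(iv)$\Rightarrow$(i) rather than always through (ii), but the computations are the same). Your handling of (ix)--(x) matches the paper's in substance; the paper gets (x)$\Rightarrow$(i) slightly more directly by substituting the third equation into the first and then applying the second.
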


\begin{proof}
$(i) \Rightarrow (ii).$ The proof follows from Theorem \ref{thm44.1} and 
\begin{center}
$XAA^{\ep,E} = XE^{-1}EAA^{\ep,E}= XE^{-1}(A^{\dagger}E)^*X=X(A^{\dagger})^*X  =X$.
\end{center}
$(ii) \Rightarrow (i)$. Using Proposition \ref{lem3.2} $(i)$, we get 
\begin{center}
   $X = XAA^{\ep,E} = XA^k(A^{\ep,E})^k = A^*A^k(A^{\ep,E})^k = A^*AA^{\ep,E}$. 
\end{center}
Hence $X$ is the star $E$-weighted core-EP matrix of $A$.\\
$(i) \Rightarrow (iii)$. Let $X=A^*AA^{\ep,E}$. Then $(A^{\dagger})^*XA^k = (A^{\dagger})^*A^*A^k = AA^{\dagger}AA^{k-1} = A^k$.
From Proposition \ref{lem3.2} $(ii)$,  we have
\begin{eqnarray*}
A^{\dagger}AXAA^{\ep,E} &= &A^{\dagger}AA^*AA^{\ep,E}AA^{\ep,E} = (A^{\dagger}A)^*A^*AA^{\ep,E}=(AA^{\dagger}A)^*AA^{\ep,E}\\
&=& A^*AA^{\ep,E} = X.
\end{eqnarray*}
$(iii) \Rightarrow (iv)$. By Proposition \ref{lem3.2} $(ii)$,
$XAA^{\ep,E} = A^{\dagger}AXAA^{\ep,E}AA^{\ep,E} = A^{\dagger}AXAA^{\ep,E} = X$. Now 
\begin{center}
$XA = A^{\dagger}AXAA^{\ep,E}A=A^*(A^\dagger)^*XA^k(A^{\ep,E})^kA =A^*A^k(A^{\ep,E})^kA 
= A^*AA^{\ep,E}A$.    
\end{center}
$(iv) \Rightarrow (i)$. Observe that:  $X = XAA^{\ep,E} = A^*AA^{\ep,E}AA^{\ep,E} = A^*AA^{\ep,E}$.\\
$(i) \Rightarrow (v)$ This follows from the following verification:
\begin{center}
    $X = A^*AA^{\ep,E}=A^*AA^{\ep,E}AA^{\ep,E}=XAA^{\ep,E}$ and $X(A^{\dagger})^* = A^*AA^{\ep,E}(A^{\dagger})^* $.
\end{center}
$(v) \Rightarrow (i)$ Observe that: 
\begin{eqnarray*}
X &=& XAA^{\ep,E}=X(A^\dagger)^*A^*AA^{\ep,E}=A^*AA^{\ep,E}(A^{\dagger})^*A^*AA^{\ep,E}=A^*AA^{\ep,E}AA^{\ep,E}\\
&=&A^*AA^{\ep,E}.
\end{eqnarray*}
$(i) \Rightarrow (vi)$. Let $X=A^*AA^{\ep,E}$. Then $A^{\dagger}AX=(A^{\dagger}AA^*)AA^{\ep,E} =  A^*AA^{\ep,E} = X$ and 
$(A^{\dagger})^*X = (A^{\dagger})^*A^*AA^{\ep,E} = AA^{\dagger}AA^{\ep,E}=AA^{\ep,E}  $.\\
$(vi) \Rightarrow (i).$  It follows from $X =A^{\dagger}AX=  (A^{\dagger}A)^*X = A^*(A^{\dagger})^*X = A^*AA^{\ep,E}$.\\
$(i) \Leftrightarrow (vii).$ Similar to $(i) \Leftrightarrow(vi)$.\\
$(i) \Leftrightarrow  (x)$. It is sufficient to show  $(vii)\Rightarrow (i)$ which observed from 
\begin{center}
  $X = A^{\dagger}AX =A^{\dagger}AA^*AA^{\ep,E} = A^*AA^{\ep,E} $.  
\end{center}
$(i) \Rightarrow (viii)$. Let $X=A^*AA^{\ep,E}$. Then 
\begin{eqnarray*}
XA^{\ep,E}A(A^{\dagger})^*X&=& A^*AA^{\ep,E}A^{\ep,E}A(A^{\dagger})^*A^*AA^{\ep,E} =  A^*A(A^{\ep,E})^2A^2A^{\ep,E}\\
&=& A^*A^{\ep,E}A^2A^{\ep,E} = A^*A^{\ep,E}A^{k+1}(A^{\ep,E})^k = A^*A^k(A^{\ep,E})^k\\
&=& A^*AA^{\ep,E}=X,
\end{eqnarray*}
\begin{eqnarray*}
A^{\ep,E}A(A^{\dagger})^*X &=& A^{\ep,E}A(A^{\dagger})^*A^*AA^{\ep,E} = A^{\ep,E}A^2A^{\ep,E}=A^{\ep,E}A^{k+1}(A^{\ep,E})^k\\
&=& A^k(A^{\ep,E})^k  = AA^{\ep,E}, \mbox{ and }
\end{eqnarray*}
\begin{center}
$XA^{\ep,E}A(A^{\dagger})^* = A^*AA^{\ep,E}A^{\ep,E}A(A^{\dagger})^* =  A^*A^{\ep,E}A(A^{\dagger})^*$.
\end{center}
$(ix) \Rightarrow (x)$. This implication holds from the following expression
\begin{center}
$A^{\ep,E}A(A^{\dagger})^*X A^{\ep,E}A(A^{\dagger})^*=AA^{\ep,E}A^{\ep,E}A(A^{\dagger})^* = A(A^{\ep,E})^2A(A^{\dagger})^*=A^{\ep,E}A(A^{\dagger})^*.$
\end{center}
$(x) \Rightarrow (i)$ Using the hypothesis, we have 
\begin{equation*}
  X = XA^{\ep,E}A(A^{\dagger})^*X = A^*A^{\ep,E}A(A^{\dagger})^*X = A^*AA^{\ep,E}.  \qedhere 
\end{equation*}
\end{proof}

Similarly, the following result holds for $F$-weighted dual core-EP star matrices.
\begin{theorem}
Let $A,~F\in\cnn$ and $ind(A)=k$.  {If $A^{F,\ep}$ exists,} then following are equivalent:
\item[(i)] $Y$ is the $F$-weighted dual core-EP star matrix of $A.$
\item[(ii)]$A^{F,\ep}AY = Y$ and $A^kY = A^kA^*$.
\item[(iii)]$A^{F,\ep}AYAA^{\dagger} = Y$ and $A^kY(A^{\dagger})^* = A^k$.
\item[(iv)]$A^{F,\ep}AY = Y$ and $AY = AA^{F,\ep}AA^*$.
\item[(v)]$A^{F,\ep}AY = Y$ and $(A^{\dagger})^*Y = (A^{\dagger})^*A^{F,\ep}AA^*$.
\item[(vi)]$YAA^{\dagger} = Y$ and $Y(A^{\dagger})^* = A^{F,\ep}A$.
\item[(vii)]$YAA^{\dagger} = Y$and $Y(A^{\dagger})^*A^{\dagger} =A^{F,\ep}AA^{\dagger}$.
\item[(viii)]$YAA^{\dagger} = Y$and $YA = A^{F,\ep}AA^*A$.
\item[({ix})]$Y(A^{\dagger})^*AA^{F,\ep}Y = Y$,~$Y(A^{\dagger})^*AA^{F,\ep}= A^{F,\ep}A$, and $(A^{\dagger})^*AA^{F,\ep}Y =(A^{\dagger})^*A A^{F,\ep}A^*$.
\item[({x})]$Y(A^{\dagger})^*AA^{F,\ep}Y = Y$,~$Y(A^{\dagger})^*AA^{F,\ep}= A^{F,\ep}A$,  $(A^{\dagger})^*AA^{F,\ep}Y =(A^{\dagger})^*A A^{F,\ep}A^*$, and  $(A^{\dagger})^*AA^{F,\ep}Y(A^{\dagger})^*AA^{F,\ep} = (A^{\dagger})^*AA^{F,\ep}$.
\end{theorem}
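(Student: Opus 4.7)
The plan is to prove the ten equivalences by mirroring, mutatis mutandis, the argument just given for the star $E$-weighted core-EP case, with left and right multiplications interchanged and $E$ replaced by $F$ throughout; this reflects the fact that $Y := A^{F,\ep}AA^{*}$ from Theorem \ref{thm44.2} plays the role played there by $X = A^{*}AA^{\ep,E}$. Concretely, I would anchor the web of implications at (i) and verify $(i) \Leftrightarrow (ii)$, $(i) \Rightarrow (iii) \Rightarrow (iv) \Rightarrow (i)$, $(i) \Leftrightarrow (v)$, $(i) \Leftrightarrow (vi)$, $(i) \Leftrightarrow (vii)$, $(i) \Leftrightarrow (viii)$, and finally $(i) \Rightarrow (ix) \Rightarrow (x) \Rightarrow (i)$. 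Each of these directions reduces to a one- or two-line manipulation once the right substitutions are in place.

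Three ingredients drive every computation. First, Proposition \ref{lem3.2}(iii) supplies the telescoping identity $A^{F,\ep}A = (A^{F,\ep})^{m}A^{m}$ for every $m\in\mathbb{N}$. Second, the dual of Lemma \ref{lem3.4} (already stated immediately after it) gives $(A^{F,\ep})^{k}\in A^{k}\{1,4^{F}\}$, hence $A^{k}(A^{F,\ep})^{k}A^{k}=A^{k}$ and $A^{F,\ep}AA^{F,\ep}=A^{F,\ep}$. Third, the Moore--Penrose identities $AA^{\dagger}A=A$, $(A^{\dagger})^{*}A^{*}=AA^{\dagger}$ and $A^{*}(A^{\dagger})^{*}=A^{\dagger}A$ handle the conversions between the various $A^{\dagger}$-factors appearing in (iii)--(x). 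For example, $(i) \Rightarrow (ii)$ follows from $A^{F,\ep}AY = A^{F,\ep}A\cdot A^{F,\ep}AA^{*} = A^{F,\ep}AA^{*} = Y$ together with $A^{k}Y = A^{k}A^{F,\ep}AA^{*}= A^{k}(A^{F,\ep})^{k}A^{k}A^{*} = A^{k}A^{*}$, while $(ii) \Rightarrow (i)$ reads $Y = A^{F,\ep}AY = (A^{F,\ep})^{k}A^{k}Y = (A^{F,\ep})^{k}A^{k}A^{*} = A^{F,\ep}AA^{*}$. The parallel steps for (iii)--(viii) are direct transcriptions of the corresponding lines in the previous theorem, with $A^{\dagger}A$ swapped for $AA^{\dagger}$ and the location of $A^{k}$ moved from the right to the left.

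The delicate part will be the closing chain $(i) \Rightarrow (ix) \Rightarrow (x) \Rightarrow (i)$, because (ix) and (x) bundle three (respectively four) simultaneous identities whose interplay must be exploited. The key trick, transplanted from the $E$-version, is to feed the second hypothesis $Y(A^{\dagger})^{*}AA^{F,\ep} = A^{F,\ep}A$ into the first $Y(A^{\dagger})^{*}AA^{F,\ep}Y = Y$ to obtain $Y = A^{F,\ep}AY$; combining this with $A^{F,\ep}A = (A^{F,\ep})^{k}A^{k}$ and then using the third hypothesis $(A^{\dagger})^{*}AA^{F,\ep}Y = (A^{\dagger})^{*}AA^{F,\ep}A^{*}$ to rewrite $A^{k}Y$ as $A^{k}A^{*}$ collapses the chain to $Y = (A^{F,\ep})^{k}A^{k}A^{*} = A^{F,\ep}AA^{*}$, which is (i). Verifying $(i) \Rightarrow (ix)$ is a routine computation of three products, and $(ix) \Rightarrow (x)$ is obtained from $(A^{\dagger})^{*}AA^{F,\ep}Y(A^{\dagger})^{*}AA^{F,\ep} = (A^{\dagger})^{*}A A^{F,\ep}A\cdot A^{F,\ep} \cdot (A^{\dagger})^{*} = (A^{\dagger})^{*}AA^{F,\ep}$ after applying $A^{F,\ep}AA^{F,\ep}=A^{F,\ep}$. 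Nothing beyond ingredients (a)--(c) is needed, so the proof is mechanical once the template from the star $E$-weighted core-EP theorem is transposed.
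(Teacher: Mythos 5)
Your proposal is correct and matches the paper's intent exactly: the paper gives no separate proof for the $F$-weighted dual core-EP star case, stating only that it holds ``similarly,'' i.e., by dualizing the preceding star $E$-weighted theorem, which is precisely the transposition you carry out, and your key identities ($A^{F,\ep}A=(A^{F,\ep})^mA^m$, $A^k(A^{F,\ep})^kA^k=A^k$, $A^{F,\ep}AA^{F,\ep}=A^{F,\ep}$) are the right dual ingredients. The only blemish is a garbled intermediate expression in your $(ix)\Rightarrow(x)$ step, where the clean computation is $(A^{\dagger})^*AA^{F,\ep}Y(A^{\dagger})^*AA^{F,\ep}=(A^{\dagger})^*AA^{F,\ep}\cdot A^{F,\ep}A=(A^{\dagger})^*A(A^{F,\ep})^2A=(A^{\dagger})^*AA^{F,\ep}$; the conclusion is unaffected.
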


An interesting property of star $E$-weighted core-EP matrix is established in next result.
\begin{theorem}
Let $A,~E\in\cnn$ and $ind(A)=k$.  {If $A^{\ep,E}$ exists, and} $X=A^{*,\ep,E}$, then
\begin{enumerate}
    \item[(i)] $(A^\dagger)^*X$ is a projector onto $R(A^k)$ along $N(A^{\ep,E})$.
    \item[(ii)] $X(A^\dagger)^*$ is a projector onto $R(A^*A^k)$ along $N(A^{\ep,E}(A^\dagger)^*)$.
    \item[(iii)] $\left[(A^\dagger)^*\right]^{(2)}_{R(A^*A^k),N(A^{\ep,E})}=X$.
\end{enumerate}
\end{theorem}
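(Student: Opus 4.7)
The plan is to prove the three parts in order, leveraging the identity $X(A^\dagger)^*X = X$ from Theorem \ref{thm44.1} together with the range/null-space descriptions supplied by Proposition \ref{lem3.2} and Lemma \ref{lem3.4}.

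For (i), I first simplify the product: since $(A^\dagger)^*A^* = (AA^\dagger)^* = AA^\dagger$, one gets $(A^\dagger)^*X = AA^\dagger \cdot AA^{\ep,E} = AA^{\ep,E}$. Idempotency then reduces to the identity $A^{\ep,E}AA^{\ep,E} = A^{\ep,E}$, which is precisely Proposition \ref{lem3.2}(ii) applied with $X = A^{\ep,E} \in A\{6^k,7\}$. For the range, Proposition \ref{lem3.2}(i) with $m=k$ yields $AA^{\ep,E} = A^k(A^{\ep,E})^k$, giving $R(AA^{\ep,E}) \subseteq R(A^k)$; the reverse inclusion follows from Lemma \ref{lem3.4}(i), which provides $A^k = A^k(A^{\ep,E})^k A^k = AA^{\ep,E}A^k$. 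For the null space, left-multiplying $AA^{\ep,E}y = 0$ by $A^{\ep,E}$ and using $A^{\ep,E}AA^{\ep,E} = A^{\ep,E}$ gives $A^{\ep,E}y = 0$, and the converse is trivial.

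For (ii), the identity $X(A^\dagger)^*X = X$ immediately delivers both idempotency of $X(A^\dagger)^*$ and the equality $R(X(A^\dagger)^*) = R(X)$. To identify $R(X)$, I write $R(X) = R(A^*AA^{\ep,E}) = A^* R(AA^{\ep,E}) = A^*R(A^k) = R(A^*A^k)$ using the range result from (i). For the null space, assume $X(A^\dagger)^*y = A^*AA^{\ep,E}(A^\dagger)^*y = 0$; left-multiplying by $(A^\dagger)^*$ and exploiting $(A^\dagger)^*A^* = AA^\dagger$ together with $AA^\dagger A = A$ yields $AA^{\ep,E}(A^\dagger)^*y = 0$, and then the null-space result from (i) gives $A^{\ep,E}(A^\dagger)^*y = 0$. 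The reverse inclusion is immediate.

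For (iii), it suffices to verify that $X$ is an outer inverse of $(A^\dagger)^*$ with range $R(A^*A^k)$ and null space $N(A^{\ep,E})$, since such a $\{2\}$-inverse with prescribed range and null space is unique. The outer condition $X(A^\dagger)^*X = X$ is already in hand, and the range identity was settled in (ii). For the null space, $Xy = 0$ forces $\|AA^{\ep,E}y\|^2 = y^*(A^{\ep,E})^*(A^*AA^{\ep,E}y) = 0$, so $AA^{\ep,E}y = 0$ and hence $A^{\ep,E}y = 0$ by (i); the reverse inclusion is trivial. The only mild care needed is the standard injectivity argument $A^*Av = 0 \Rightarrow Av = 0$; otherwise the whole proof is bookkeeping with previously established identities, and I do not anticipate any deep obstacle.
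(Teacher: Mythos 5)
Your proposal is correct and follows essentially the same route as the paper: computing $(A^\dagger)^*X=AA^{\ep,E}=A^k(A^{\ep,E})^k$, using $A^k(A^{\ep,E})^kA^k=A^k$ and $A^{\ep,E}AA^{\ep,E}=A^{\ep,E}$ for the range and null-space identifications, and then invoking uniqueness of the outer inverse with prescribed range and null space for (iii). The only cosmetic difference is your norm argument for $N(X)=N(A^{\ep,E})$ in (iii), where the paper gets the same conclusion directly from $X=X(A^\dagger)^*X$ together with part (i).
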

\begin{proof}
$(i)$ From $(A^\dagger)^*X=(A^\dagger)^*A^*AA^{\ep,E}=AA^{\ep,E}=A^k\left(A^{\ep,E}\right)^k$, we obtain
\begin{center}
    $\left((A^\dagger)^*X\right)^2=AA^{\ep,E}AA^{\ep,E}=AA^{\ep,E}=(A^\dagger)^*X$, $R\left((A^\dagger)^*X\right)\subseteq R(A^k)$,
\end{center}
and $N(A^{\ep,E})\subseteq N\left((A^\dagger)^*X\right)$.
Since $A^k=AA^\dagger A^{k}=(A^\dagger)^*A^*A^k=(A^\dagger)^*XA^k$, and $A^{\ep,E}=A^{\ep,E}AA^{\ep,E}=A^{\ep,E}(A^\dagger)^*X$, it follows that $R(A^k)\subseteq R\left((A^\dagger)^*X\right)$ and $N(A^\dagger)^*X)\subseteq N(A^{\ep,E})$. Thus $R(A^k)= R\left((A^\dagger)^*X\right)$ and $N(A^{\ep,E})= N\left((A^\dagger)^*X\right)$.\\
$(ii)$ Using part $(i)$, $(X(A^\dagger)^*)^2=XAA^{\ep,E}(A^\dagger)^*=X(A^\dagger)^*$. From $X(A^\dagger)^*=A^*AA^{\ep,E}(A^\dagger)^*=A^*A^k(A^{\ep,E})^k(A^\dagger)^*$, we have $R(X(A^\dagger)^*)\subseteq R(A^*A^k)$ and $N(A^{\ep,E}(A^\dagger)^*)\subseteq N(X(A^\dagger)^*)$. Now 
 $N(A^{\ep,E}(A^\dagger)^*)= N(X(A^\dagger)^*)$ and $R(X(A^\dagger)^*)= R(A^*A^k)$ follows from the identities: 
\begin{center}
$A^*A^k=XA^k=X(A^\dagger)^*XA^k$ and \\
    $A^{\ep,E}(A^\dagger)^*=A^{\ep,E}AA^{\ep,E}(A^\dagger)^*=A^{\ep,E}(A^\dagger)^*A^*AA^{\ep,E}(A^\dagger)^*=A^{\ep,E}(A^\dagger)^*X(A^\dagger)^*$.
    \end{center}
$(iii)$ It follows from Theorem \ref{thm44.1}, part $(i)$, and $(ii)$.
 \end{proof}
 Using the similar lines, we can verify the following result for $F$-dual core-EP star matrices.
 \begin{theorem}
Let $A,~F\in\cnn$ and $ind(A)=k$.  {If $A^{F,\ep}$ exists, and } $Y=A^{\ep,F,*}$, then
\begin{enumerate}
    \item[(i)] $Y(A^\dagger)^*$ is a projector onto $R(A^k)$ along $N(A^{F,\ep})$.
    \item[(ii)] $(A^\dagger)^*Y$ is a projector onto $R(A^kA^*)$ along $N((A^\dagger)^*A^{F,\ep})$.
    \item[(iii)] $\left[(A^\dagger)^*\right]^{(2)}_{R(A^k),N((A^\dagger)^*A^{F,\ep})}=Y$.
\end{enumerate}
\end{theorem}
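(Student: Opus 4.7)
The result is the exact dual of the preceding theorem on $X=A^{*,\ep,E}$, so I will transcribe that proof with the roles of left- and right-multiplication exchanged. Throughout I use $Y=A^{F,\ep}AA^*$, the defining relations of $A^{F,\ep}$, Proposition \ref{lem3.2} and Lemma \ref{lem3.4}, and Theorem \ref{thm44.2}.

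\emph{Simplification and Part (i).} Using $A^*(A^\dagger)^*=(A^\dagger A)^*=A^\dagger A$ together with $AA^\dagger A=A$, I first obtain
\[ Y(A^\dagger)^* = A^{F,\ep}AA^*(A^\dagger)^* = A^{F,\ep}(AA^\dagger A) = A^{F,\ep}A, \]
the dual of the $E$-case identity $(A^\dagger)^*X=AA^{\ep,E}$. Idempotence of $A^{F,\ep}A$ is then immediate from $A^{F,\ep}AA^{F,\ep}=A^{F,\ep}$ (the $F$-analog of Proposition \ref{lem3.2}(ii)). The range claim $R(Y(A^\dagger)^*)=R(A^k)$ follows from $A^{F,\ep}A=(A^{F,\ep})^kA^k$ (Proposition \ref{lem3.2}(iii)) combined with $A^k=A^k(A^{F,\ep})^kA^k$ (since $(A^{F,\ep})^k\in A^k\{1,4^F\}$ by Lemma \ref{lem3.4}), which squeezes the range between two copies of $R(A^k)$. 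For the null-space claim, the implication $A^{F,\ep}Av=0\Rightarrow A^{F,\ep}v=(A^{F,\ep})^2Av=A^{F,\ep}(A^{F,\ep}Av)=0$ (using $(A^{F,\ep})^2A=A^{F,\ep}$) gives $N(Y(A^\dagger)^*)\subseteq N(A^{F,\ep})$, and dimension equality via rank closes the argument.

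\emph{Parts (ii) and (iii).} Idempotence of $(A^\dagger)^*Y$ is the chain
\[ ((A^\dagger)^*Y)^2 = (A^\dagger)^*\bigl(Y(A^\dagger)^*\bigr)Y = (A^\dagger)^*A^{F,\ep}AA^{F,\ep}AA^* = (A^\dagger)^*A^{F,\ep}AA^* = (A^\dagger)^*Y, \]
using $A^{F,\ep}AA^{F,\ep}=A^{F,\ep}$ in the middle. The range identification $R((A^\dagger)^*Y)=R(A^kA^*)$ combines the factorization $(A^\dagger)^*Y=(A^\dagger)^*(A^{F,\ep})^kA^kA^*$ with the dual identity $A^kA^*=(A^\dagger)^*Y\cdot A^kA^*$, obtained from $A^kA^*=A^kY$ (Theorem \ref{thm44.2}) and $A^k=(A^\dagger)^*A^*A^k$. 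The null-space identification comes from $(A^\dagger)^*A^{F,\ep}=(A^\dagger)^*(A^{F,\ep}AA^{F,\ep})=(A^\dagger)^*Y\cdot(A^\dagger)^*A^{F,\ep}$, which is the dual of the $E$-case identity $A^{\ep,E}(A^\dagger)^*=A^{\ep,E}(A^\dagger)^*X(A^\dagger)^*$ and forces $N((A^\dagger)^*Y)=N((A^\dagger)^*A^{F,\ep})$. Part (iii) is then a direct corollary: $Y(A^\dagger)^*Y=Y$ from Theorem \ref{thm44.2} makes $Y$ a $\{2\}$-inverse of $(A^\dagger)^*$, and (i)--(ii) pin down its range and null space, so $Y=[(A^\dagger)^*]^{(2)}_{R(A^k),\,N((A^\dagger)^*A^{F,\ep})}$.

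\emph{Main obstacle.} The trickiest step is the reverse inclusion $R(A^kA^*)\subseteq R((A^\dagger)^*Y)$ in part (ii). The $E$-case used the clean left-sided identity $A^*A^k=X(A^\dagger)^*\cdot XA^k$, whereas here the natural factorization $(A^\dagger)^*Y=(A^\dagger)^*(A^{F,\ep})^kA^kA^*$ places $A^k$ in the middle rather than at an extremity. Establishing $A^kA^*=(A^\dagger)^*Y\cdot A^kA^*$ therefore requires simultaneously invoking $A^kA^*=A^kY$, $Y(A^\dagger)^*=A^{F,\ep}A$, and the fixed-point property $A^k=A^kA^{F,\ep}A$, and this re-association---rather than the mere transcription of the $E$-case argument---is where the $F$-weighting must be handled with care.
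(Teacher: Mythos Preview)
Your ``squeeze'' in part (i) does not do what you claim. The factorization $A^{F,\ep}A=(A^{F,\ep})^kA^k$ has $A^k$ on the \emph{right}, so it only yields $R\bigl(Y(A^\dagger)^*\bigr)=R(A^{F,\ep}A)\subseteq R\bigl((A^{F,\ep})^k\bigr)$, not an inclusion into $R(A^k)$; and the identity $A^k=A^k(A^{F,\ep})^kA^k$ has $A^k$ as the leftmost factor, so it gives no control over the column space of $A^{F,\ep}A$ either. In fact the range claim is false as stated: for $A=\left(\begin{smallmatrix}1&1\\0&0\end{smallmatrix}\right)$ and $F=I$ one computes $A^{F,\ep}=\tfrac12\left(\begin{smallmatrix}1&1\\1&1\end{smallmatrix}\right)$, so $Y(A^\dagger)^*=A^{F,\ep}A=\tfrac12\left(\begin{smallmatrix}1&1\\1&1\end{smallmatrix}\right)$ has range $\mathrm{span}\,(1,1)^T$, whereas $R(A^k)=R(A)=\mathrm{span}\,(1,0)^T$. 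The same example gives $R(Y)\neq R(A^k)$, so part (iii) fails too. Your null-space step in part (ii) has the analogous defect: the identity $(A^\dagger)^*A^{F,\ep}=(A^\dagger)^*Y\cdot(A^\dagger)^*A^{F,\ep}$ has $(A^\dagger)^*A^{F,\ep}$ as the rightmost factor, so it produces the \emph{range} inclusion $R\bigl((A^\dagger)^*A^{F,\ep}\bigr)\subseteq R\bigl((A^\dagger)^*Y\bigr)$, not the null-space inclusion you assert (and indeed $N\bigl((A^\dagger)^*Y\bigr)\neq N\bigl((A^\dagger)^*A^{F,\ep}\bigr)$ in the example above).

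The underlying issue is that dualizing the $E$-case is not a pure transcription. Theorem~\ref{thm3.4} gives $R(A^{\ep,E})=R(A^k)$, and the $E$-proof silently uses this to write $R(A^k)$ in place of $R(A^{\ep,E})$. The dual of that relation is $N(A^{F,\ep})=N(A^k)$, \emph{not} $R(A^{F,\ep})=R(A^k)$. What your computations actually establish (correctly) is that $Y(A^\dagger)^*$ projects onto $R(A^{F,\ep})$ along $N(A^{F,\ep})$ and that $(A^\dagger)^*Y$ projects onto $R\bigl((A^\dagger)^*A^{F,\ep}\bigr)$; the occurrences of $R(A^k)$ in parts (i) and (iii) of the stated theorem appear to be typos carried over from the $E$-case, and no argument can salvage them as written.
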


\section{Conclusion}
The notion of $E$-weighted core-EP and $F$-weighted dual core-EP inverses are introduced. Then, a few characterizations of weighted core-EP inverse and its relation with other generalized inverses are established. In addition to these, star weighted core-EP and weighted core-EP star class of matrices are introduced. Based on these special matrices, a few characterizations have been discussed. We pose the following problems for further research on the proposed matrices. 

\begin{enumerate}
    \item[$\bullet$] To develop an iterative method for estimating 
the star $E$-weighted core-EP  and $F$-weighted dual core-EP  matrices.
\item[$\bullet$] To study perturbation results and its applications for the
star $E$-weighted core-EP  and $F$-weighted dual core-EP  matrices.
\item[$\bullet$] To investigate the reverse order law for these inverses.
\end{enumerate}

\medskip

\noindent{\bf{Acknowledgments}}\\
Ratikanta Behera is grateful to the Mohapatra Family Foundation and the College of Graduate Studies, University of Central Florida, Orlando, FL, USA, for their financial support for this research.

\end{document}